\newenvironment{fsa}[1][auto]{\begin{tikzpicture}[->,>=stealth',
    shorten >=1pt,auto,node distance=3cm,double distance between line centers=0.45ex,
    initial text=,accepting/.style=accepting by arrow,
    every state/.style={inner sep=3pt,minimum size=0pt},
    every loop/.style={looseness=12},semithick,#1]}{\end{tikzpicture}}
\newtheorem{mainthm}{Theorem}
\newcommand\Aut{\operatorname{Aut}}
\newcommand\Mon{\operatorname{Mon}}
\def\mathvisiblespace{\text{\textvisiblespace}}
  \def\@testdef #1#2#3{%
    \def\reserved@a{#3}\expandafter \ifx \csname #1@#2\endcsname
    \reserved@a  \else
    \typeout{^^Jlabel #2 changed:^^J%
    \meaning\reserved@a^^J%
    \expandafter\meaning\csname #1@#2\endcsname^^J}%
    \@tempswatrue \fi}
\newcommand\recurbs[4]{\ifnum#1=0
  \fill (0,0) rectangle +(1,1);
%  \filldraw[fill=black,draw=gray!25,thin] (0,0) rectangle +(1,1);
  \else
  \foreach\x/\y in {#4} {\tempn=#1\advance\tempn by -1
    \begin{scope}[xscale=1/#2,yscale=1/#3,xshift=\x cm,yshift=\y cm]
      \recurbs{\the\tempn}{#2}{#3}{#4}
    \end{scope}
  };
  \fi}
\newcommand\bs[4]{\recurbs{#1}{#2}{#3}{#4}
  \pgfmathsetmacro\griddx{#2^(-#1)}
  \pgfmathsetmacro\griddy{#3^(-#1)}
  \draw[thin,gray!25,xstep=\griddx,ystep=\griddy] (0,0) grid +(1,1);
}
\newcommand\bsrule[2]{\begin{tikzpicture}[scale=0.3,>=stealth']
    \bs{0}{1}{1}{};
    \draw[->] (1.5,0.5) -- +(1,0);
    \begin{scope}[xshift=3cm,yshift=0.5cm-#1*0.5cm,scale=#1]
      \bs{1}{#1}{#1}{#2}
    \end{scope}
  \end{tikzpicture}}
\begin{document}
\title[Monadic second-order logic and dominoes on self-similar graphs]{Monadic second-order logic and the domino problem on self-similar graphs}
\author{Laurent Bartholdi}
\address{Mathematisches Institut, Georg-August Universit\"at zu G\"ottingen}
\email{laurent.bartholdi@gmail.com}
\date{November 5th, 2020}
\begin{abstract}
  We consider the domino problem on Schreier graphs of self-similar
  groups, and more generally their monadic second-order logic. On the
  one hand, we prove that if the group is bounded then the graph's
  monadic second-order logic is decidable. This covers, for example,
  the Sierpiński gasket graphs and the Schreier graphs of the Basilica
  group. On the other hand, we already prove undecidability of the
  domino problem for a class of self-similar groups, answering a
  question by Barbieri and Sablik, and some examples including one of
  linear growth.
\end{abstract}
\maketitle

%%%%%%%%%%%%%%%%%%%%%%%%%%%%%%%%%%%%%%%%%%%%%%%%%%%%%%%%%%%%%%%% 
\section{Introduction}
The \emph{domino problem} is (in spite of its connection to monadic
second-order logic, see~\S\ref{ss:mso}) a mockingly elementary
question to ask of an edge-labelled graph: ``given a collection of
labelled dominoes (with numbers on their ends), can one put a domino
on each edge of the graph in such a manner that edge labels and vertex
numbers match?''

This problem is clearly solvable by brute force if the graph is
finite, and it is easy to see that it is solvable if the graph is a
line. Remarkably, if the graph is the square grid (with edges labelled
vertical/horizontal) then this problem is unsolvable, as was shown by
Berger~\cite{berger:undecidability}.

This result should not be seen as negative; rather, it points to the
universal computing power present in the square grid. It is natural to
delineate, then, the frontier between decidability and undecidability,
in terms of the structure of the underlying graph.

A large source of labelled graphs worthy of study arises from group
theory: given a group $G$ with generating set $A$ and acting on a
space $X$, consider the graph with vertex set $X$, having for all
$s\in A,x\in X$ an edge labelled $s$ from $x$ to $s x$. Such graphs
are known as \emph{Schreier graphs} since their appearance
in~\cite{schreier:untergruppen}. In this setting, an instance of the
domino problem is a subset $\Theta\subseteq B\times A\times B$, and
the question is whether there exists a colouring $\tau\colon X\to B$
with $(\tau(x),s,\tau(s x))\in\Theta$ for all $s\in A,x\in X$.

\subsection{Decidability results}
Our first, ``positive'' result concerns graphs with an abundance of
local cut points. These graphs are intimately connected to
\emph{finitely ramified fractals} and \emph{bounded transducer
  automata}, see just below for definitions. This result will be
extended, in~\S\ref{ss:mso}, to decidability of the graph's monadic
second-order theory.
\begin{mainthm}[= Proposition~\ref{prop:pcfdecidable}]\label{thm:bounded}
  The domino problem is decidable on post-critically finite
  self-similar graphs.
\end{mainthm}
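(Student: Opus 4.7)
The plan is to exploit the recursive structure provided by post-critical finiteness. By assumption $G$ is built as a nested limit of finite graphs $G^{(n)}$, where $G^{(n+1)}$ consists of $d$ isomorphic copies of $G^{(n)}$ glued together along a bounded set of ``junction'' vertices, and the whole graph meets each copy only through a fixed finite \emph{boundary} $\partial$ (the image of the post-critical set). Because $\partial$ is finite and independent of $n$, the set $B^\partial$ of possible boundary colourings by a finite letter set $B$ is itself finite.

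Fix a domino set $\Theta\subseteq B\times A\times B$. For $n\ge 0$, define
\[
S_n=\bigl\{\sigma\colon\partial\to B\,:\,\sigma\text{ extends to a }\Theta\text{-valid colouring of }G^{(n)}\bigr\}\subseteq B^\partial.
\]
The set $S_0$ can be enumerated by brute force on the finite graph $G^{(0)}$. For the induction step, the self-similar decomposition of $G^{(n+1)}$ provides an explicit formula $S_{n+1}=F(S_n)$: loop over the (finite) colourings of the junction vertices of $G^{(n+1)}$; for each of the $d$ sub-copies of $G^{(n)}$, check that the colouring induced on its copy of $\partial$ lies in $S_n$; and retain the resulting external-boundary colouring whenever some junction choice succeeds. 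Since $(S_n)$ is a decreasing sequence inside the finite lattice of subsets of $B^\partial$, it stabilises at an index $N\le 2^{|B|^{|\partial|}}$, and $S_\infty:=S_N$ is thus effectively computable.

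It remains to reduce the domino problem on $G$ to non-emptiness of $S_\infty$. The ``only if'' direction is immediate, since any global tiling restricts to a tiling of each $G^{(n)}$. The converse is a König's lemma / compactness argument: pick $\sigma\in S_\infty$; for every $n$ choose an extension $\tau_n$ of $\sigma$ to $G^{(n)}$; since $B$ is finite and the $G^{(n)}$ exhaust $G$, a standard diagonal extraction produces a pointwise limit $\tau\colon G\to B$ that is $\Theta$-valid on every edge. This proves decidability: the algorithm iterates $F$ starting from the set of colourings of $G^{(0)}$ until stabilisation, and answers \textbf{yes} iff the stable set is non-empty.

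The main obstacle I anticipate is not the decidability argument itself but the precise formalisation of the self-similar decomposition: writing down exactly how the copies of $G^{(n)}$ inside $G^{(n+1)}$ are identified, pinning down the junction set and verifying that the boundary $\partial$ is really invariant under the substitution, and explaining how the edge labels from $A$ behave across junctions so that the formula $S_{n+1}=F(S_n)$ really does encode $\Theta$-validity. Once this bookkeeping is in place, the monotone fixed-point scheme and its correctness are essentially forced.
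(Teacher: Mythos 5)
Your overall strategy is the same as the paper's: define, for each level $n$ of the hierarchical decomposition, the set of boundary colourings extendable to a valid colouring of the level-$n$ piece, compute it recursively from the gluing data, and decide the problem from the asymptotic behaviour of these sets. However, the step that makes your algorithm terminate is wrong: the sequence $(S_n)$ is \emph{not} decreasing, and need not stabilise. The identification of the boundary $P_{n+1}$ of $G^{(n+1)}$ with the boundary $P_n$ of $G^{(n)}$ is via the shift on post-critical words, and a boundary colouring extendable to $G^{(n+1)}$ does not restrict (under this identification) to one extendable to $G^{(n)}$ --- the various points of $P_{n+1}$ live in \emph{different} sub-copies of $G^{(n)}$. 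A concrete counterexample already occurs for the dyadic interval (odometer) structure, where $G^{(n)}$ is a path on $2^n$ vertices with boundary the two endpoints: for the tileset forcing each $t$-neighbour to increment the colour mod $3$, the set $S_n$ consists of the pairs $(b,b+2^n-1\bmod 3)$, which oscillates with period $2$ and never becomes constant, although every level and the limit graph are tileable. So ``iterate $F$ until stabilisation'' does not terminate. What is true, and what the paper uses, is that the update map is (after the boundary sets $P_n$ and the update rule themselves become periodic in $n$, which requires its own argument about pre-periodicity of post-critical points) a fixed computable self-map of a finite set, so the orbit $(S_n)$ is \emph{eventually periodic}; one detects the cycle and answers yes iff it does not consist of the empty set. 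Your scheme is repairable along these lines, but the monotone fixed-point argument as written is incorrect.

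A second, smaller gap is in the compactness step. The limit Schreier graph is an ascending union of the finite approximations only when the defining ray $\xi$ is \emph{regular}; when $\xi$ is eventually periodic with the same period as a post-critical point, the limit graph carries extra edges at the basepoint (coming from nucleus elements $g$ with $\Phi(g,p)=(q,g)$ along the periodic part) that are present in no $G^{(n)}$. Your diagonal limit then produces a colouring that may violate $\Theta$ on exactly those edges, so the singular rays require an additional finite check on the colourings in the eventual cycle, as the paper carries out at the end of its proof.
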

The model of such graphs is a discrete avatar of the Sierpiński gasket; it
is the graph with vertex set all $(m,n)\in\N^2$ such that the binomial
co\"efficient $\binom n m$ is odd and edges connecting nearest
neighbours.

A \emph{transducer} is a finite rooted graph $\Phi$ with input and
output labels in a finite set $S$ on every edge, and such that that at
every vertex and for every $s\in S$ there is a single outgoing edge
with input label $s$. The transducer produces a transformation $\phi$
of the space $S^\N$ of right-infinite strings over $S$, as follows:
given $\xi\in S^\N$, there is a unique right-infinite path, starting at
the root, and with input labels $\xi$; then $\phi(\xi)\in S^\N$ are the
output labels along this path. Fixing one graph $\Phi$ and varying its
root produces a finite collection of transformations, and if all of
them are invertible then the group of permutations of $S^\N$ that they
generate is called a \emph{self-similar group}; its generating set is
naturally in bijection with the vertex set of $\Phi$. Some very small
transducers produce rich and interesting groups,
see~\S\ref{ss:sierpinski} for an example called the ``Hanoi tower
group'' in connection with the Sierpiński gasket. The graphs that we
are interested in are Schreier graphs of self-similar groups.

The self-similar group associated with a transducer $\Phi$ is
\emph{bounded}, see~\S\ref{ss:bounded}, if the exiting arrows along
every oriented cycle in $\Phi$ all eventually lead to a vertex
representing the identity transformation. The corresponding Schreier
graphs are closely related to a self-similar compactum known under
various names in the literature: ``hierarchical fractal'', ``nested
fractal'', or ``finitely ramified fractal''. Kigami considers
in~\cite{kigami:harmonic} a compact space $K$ and a family of
self-maps $(F_s)_{s\in S}$ of $K$, such that there exists a ``coding
map'' $\pi\colon S^{-\N}\to K$ with $\pi(w s)=F_s(\pi(w))$ for all
left-infinite words $w\in S^{-\N}$ and all $s\in S$. \emph{Tiles} of
level $n$ are images of cylinders, namely $\pi(S^{-\N}v)$ for a word
$v\in S^n$. Construct the graph whose vertices are all depth-$n$
tiles, with an edge between two tiles if they intersect; and take a
limit of such graphs as $n\to\infty$. For example, the Sierpiński
gasket admits three contractions $F_1,F_2,F_3$ onto its level-$1$
tiles, and the associated graph is essentially the graph mentioned
above.

General constructions by Nekrashevych~\cite{nekrashevych:ssg}
establish a duality between certain (``contracting'') self-similar
groups and expanding self-covering maps on a compact set called
\emph{limit space}. We take in~\S\ref{ss:ss} the opportunity to
clarify the connection between Kigami's and Nekrashevych's
definitions: Nekrashevych's limit space $L$ is a quotient of Kigami's
space $K$, and the $F_s$ are branches of the self-covering of
$L$. Kigami's ``ancestor structure'', a combinatorial gizmo extracted
from $(K,S)$ and powerful enough to allow reconstruction of $(K,S)$,
may be directly produced from a transducer defining the bounded
self-similar group.

A large family of self-similar groups, and associated Schreier graphs,
arise as ``iterated monodromy groups'' of complex polynomials all of
whose critical points are eventually periodic. We shall not need the
definition of ``iterated monodromy groups''; suffice it to say that
they are the algebraic counterpart to the dynamical system afforded by
the polynomial acting on its Julia set. The associated graphs are thus
limits of simplicial approximations of the Julia set of the
polynomial. One prominent example, whose Schreier graphs have been
extensively studied~(see e.g.\
\cite{dangeli-donno-matter-nagnibeda:basilica}), is the ``Basilica
group'' associated with the polynomial $z^2-1$.

%physics: eg. gases or liquids in porous media, solid-state physics, see Feder, ``fractals''

\subsection{Undecidability results}\label{ss:introu}
Our next results are in the ``negative'' direction.  Two examples of
Schreier graphs of self-similar groups appeared to have good chances
of being close to the frontier of (un)decidability of the domino
problem: the ``long range graph'' and the ``Barbieri-Sablik
$H$-graph''.  I am grateful to Ville Salo for discussions on
translating the Barbieri-Sablik self-similar structure into a
particularly simple automatic graph. I show that, for each of them,
the domino problem is undecidable. This last graph serves to answer a
question by Barbieri and Sablik, which will be reviewed later.

The ``long range graph'', see~\S\ref{ss:longrange}, is a deterministic
model of long range percolation on the integers: nearest neighbours
are connected, and for all $s>0$ points at distance $2^s$ apart are
connected ``with probability $2^{-s}$'', but in a deterministic
manner: precisely if they belong to $2^s\Z+2^{s-1}$. The graph, and
the transducer producing it, are\\
\centerline{\begin{tikzpicture}[scale=0.8,baseline,>=stealth']
    \clip (-5.5,-1) rectangle (5.5,1);
    \foreach\i in {-6,...,5} {
      \draw[->,green] (\i,0) -- +(1,0);
    }
    \foreach\i in {-7,-5,...,5} {
      \draw[->,red] (\i,0) edge[bend left=45] +(2,0);
    }
    \foreach\i in {-6,-2,...,5} {
      \draw[->,red] (\i,0) edge[bend left=36] +(4,0);
    }
    \foreach\i in {-12,-4,...,5} {
      \draw[->,red] (\i,0) edge[bend left=24] +(8,0);
    }
    \draw[->,red] (0,0) edge[loop above] ();
  \end{tikzpicture}\quad\begin{fsa}[baseline,scale=0.5]
    \node[state,green] (t) at (0,0) {$t$};
    \node[state,red] (u) at (-3,0) {$u$};
    \node[state] (e) at (3,0) {$e$};
    \path (t) edge[loop above] node {$1|0$} ()
    (t) edge node {$0|1$} (e)
    (u) edge[loop left] node {$0|0$} ()
    (u) edge node {$1|1$} (t)
    (e) edge [loop above] node {$0|0,1|1$} ();
  \end{fsa}}\\
This transducer does not generate a bounded group, but rather a
``linear growth group'': the number of paths of length $n$ in the
transducer ending at a non-trivial state is not bounded, but grows
linearly in $n$; see~\cites{sidki:acyclicity,amir-a-v:linamen}.
\begin{mainthm}[see~\S\ref{ss:longrange}]\label{thm:longrange}
  The domino problem is undecidable on the long range graph.
\end{mainthm}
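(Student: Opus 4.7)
The plan is to reduce an already-undecidable tiling problem---most naturally the $\Z^2$ Wang tileset problem of Berger---to the domino problem on the long-range graph $\Gamma$. The graph has a rich hierarchical structure coming from the $2$-adic valuation: for $n\neq 0$, writing $n = (2i+1)2^j$ with $j = v_2(n)$ attaches to $n$ a coordinate pair $(i,j)$. A red edge at level $j$ connects $(i,j)$ and $(i+1,j)$, so each level is internally a bi-infinite path, giving many parallel copies of $\Z$ embedded in $\Gamma$. The green edges, on the other hand, interact with this stratification via the classical \emph{ruler sequence} $v_2(n)$, which is produced by a finite automaton (essentially the transducer displayed just before the theorem).

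Given a Wang tileset $\mathcal{T}$, I would build dominoes $\Theta$ over an alphabet that records at each vertex both a tentative Wang tile of $\mathcal{T}$ and auxiliary book-keeping colours. On a red edge at level $j$, the dominoes would enforce the horizontal matching of the two tiles, treating the two endpoints as horizontally adjacent cells of a single row of $\Z^2$. The vertical matching is the subtle part: the simulated ``cell above'' $(i,j)$ lives at $(2i+1)2^{j+1}$, far away in $\Z$ and connected to $(2i+1)2^j$ only through long stretches of green and red edges. One therefore uses the green edges as a propagation channel, carrying along the line $\Z$ the finite-state information needed to check vertical matching, and relying on the fact that the transitions of $v_2(n)$ between consecutive integers are governed by a finite automaton so that all constraints can be encoded with a bounded alphabet.

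The main technical obstacle is exactly this propagation. A single step along a green edge can drop the level from arbitrarily high down to $0$ (for example $(2i+1)2^j \to (2i+1)2^j + 1$), so naive local book-keeping is hopeless; instead one must design the auxiliary track so that it records just enough of the $2$-adic address of the current position to recognise, as soon as a vertex of a given level is met, to which simulated column it belongs and which tile from the row above it must match. I expect that this synchronisation data can be extracted from a finite transducer reading $v_2(n)$, analogous to the standard binary-counter tile gadgets used in aperiodic tilings, but with carries propagating in both directions along $\Z$. Once the propagation scheme is in place, verifying that $\Theta$-tilings of $\Gamma$ are in bijection with $\mathcal{T}$-tilings of $\Z^2$ is a matter of checking local consistency, and yields the undecidability of the domino problem on the long-range graph.
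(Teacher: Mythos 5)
Your high-level strategy (simulate a grid inside the long-range graph and reduce from an undecidable Euclidean tiling problem) is the same as the paper's, but the specific grid you choose makes the construction unworkable, and you have correctly located the fatal point yourself: the vertical propagation. In your scheme the cell $(i,j)$ sits at $n=(2i+1)2^j$ and its vertical neighbour at $(2i+1)2^{j+1}$, so the vertical constraint for $(i,j)$ must be carried along the segment $[(2i+1)2^j,(2i+1)2^{j+1}]$ of the $t$-line. These segments overlap massively: a given integer $n$ lies in roughly $n/2$ of them (for each $j$ there are about $n/2^{j+2}$ admissible odd parts $2i+1$ with $(2i+1)2^j\le n\le(2i+1)2^{j+1}$). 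Hence any finite-state channel along $\Z$ would have to transport an unbounded number of independent Wang tiles through a single vertex; no transducer reading $v_2(n)$, however clever, can compress this, because the tiles of distinct columns are genuinely independent data. This is a counting obstruction, not a technicality. The paper sidesteps it by simulating a much sparser grid: the octant $\{(n,m):n>m\}$ with vertex $(n,m)$ placed at $2^n+2^m$. There, both adjacencies are realized by ``follow one generator ($t$ for vertical, $u$ for horizontal) until the next marked vertex,'' and one checks that between two consecutive marked vertices along such a path there are no other marked vertices, so each intermediate vertex carries exactly one horizontal and one vertical signal --- a bounded alphabet suffices.

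Two further points are missing from your proposal. First, the $2$-adic coordinate system cannot be imposed by local rules alone: all the $u$-lines (levels) look locally alike, so the marking must be anchored at the origin; the paper does this with a \emph{seeded} domino problem, marking $0$, then the signs, then the powers of $2$, then the parity of the dyadic strips, and finally the set $\{2^n+2^m\}$. Second, one must then pass from the seeded to the unseeded problem; the paper does this by showing the sunny-side-up at $0$ is sofic (exploiting the fact that $0$ is the unique vertex with a $u$-loop) and invoking the general equivalence of Lemma~\ref{lem:ssu}. Without both of these steps, and without replacing your dense half-plane by a sparse grid whose simulated edges do not overlap, the argument does not go through.
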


The second graph is a subgraph of the half-plane, in which some edges
are replaced by loops, see~\S\ref{ss:bs}. Again the graph and the transducer:\\
\centerline{\begin{fsa}[baseline,scale=0.8]
    \node[state,green] (y) at (0,0) {$y$};
    \node[state,red] (x) at (0,2) {$x$};
    \node[state] (e) at (0,4) {$e$};
    \node[state,blue] (z) at (0,6) {$z$};
    \path (y) edge[out=-20,in=20,loop] node[above right=-1mm] {$00|00$} ()
    (y) edge node[right] {$10|10$} (x)
    (y) edge[bend left] node {$\begin{matrix}01|01\\11|11\end{matrix}$} (e)
    (x) edge node[right] {$\begin{matrix}10|00,11|01\\00|10,01|11\end{matrix}$} (e)
    (z) edge[out=-20,in=20,loop] node[below right=-1mm] {$\begin{matrix}01|00\\11|10\end{matrix}$} ()
    (z) edge node[left] {$\begin{matrix}00|01\\10|11\end{matrix}$} (e);
  \end{fsa}\quad\begin{tikzpicture}[scale=0.6,baseline]
    \clip (-0.5,-0.5) rectangle (16.5,8.5);
    \foreach\i in {0,...,16} \draw[blue,thin] (\i,-1) -- (\i,9);
    \foreach\i in {0,2,...,16} { \foreach\j in {0,...,8} \draw[red,thin] (\i,\j) -- +(1,0); }
    \foreach\s in {2,4,8,16} {
      \pgfmathsetmacro\threes{3*\s}
      \foreach\i in {\s,\threes,...,16} { \foreach\j in {0,\s,...,8} \draw[green,thin] (\i-1,\j) -- +(1,0); }
    }
    \foreach\i in {0,...,16} {
      \draw[green] (0,\i) .. controls +(135:0.6) and +(225:0.6) .. +(0,0);
    }
    \foreach\s in {2,4,8,16} {
      \foreach\i in {0,\s,...,16} {
        \foreach\j in {0,\s,...,16} {
          \draw[green] (\i+\s-1,\j+0.5*\s) .. controls +(45:0.6) and +(-45:0.6) .. +(0,0);
          \draw[green] (\i+\s,\j+0.5*\s) .. controls +(135:0.6) and +(225:0.6) .. +(0,0);
        }
      }
    }
  \end{tikzpicture}}

\begin{mainthm}[see~\S\ref{ss:bs}]\label{thm:bs}
  The domino problem is undecidable on the Barbieri-Sablik $H$-graph.
\end{mainthm}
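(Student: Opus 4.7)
The plan is to reduce from an undecidable problem — most naturally the halting problem for Turing machines on the empty input — via a direct simulation that exploits the hierarchical structure of the graph. The key feature is that the green edges appear at all dyadic scales $2^s$: they organise the vertices into nested blocks of $2^s$ consecutive columns, joined at rows of height a multiple of $2^s$. This nested system of long-range edges provides ``communication channels'' at arbitrarily large scales on which a computation can be threaded, while the blue vertical edges serve as a ``time'' direction along each column.

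First I would describe the block structure precisely, and assign to every vertex a canonical ``address'' recording, for each $s$, whether the vertex lies in the left or right scale-$s$ block of its scale-$(s{+}1)$ ancestor. Then I would design a finite tile set whose tiles carry several tracks: a hierarchy track indicating the local address (and in particular which of the incident edges is green of which scale), a tape track simulating one cell of a Turing tape along each blue column, and a control track recording head and state. The blue edges propagate tape contents in time; the red edges, together with scale-$s$ green edges, propagate information between adjacent blocks and super-blocks respectively; and the green self-loops on column $0$ together with the bottom row of the half-plane seed the initial configuration of a fixed Turing machine $M$. The tile set is engineered so that a valid tiling of the whole $H$-graph exists if and only if $M$ never halts on the empty input, which is undecidable.

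The main obstacle will be to force the intended hierarchical decoding by purely local matching rules. The green edges are sparse and their locations depend on the unbounded scale parameter $s$, so a single domino sitting on a green edge must be ``charged'' with enough information to identify its scale, and this charge must be enforced consistently from both sides. The classical device is a Robinson-style nested-squares construction, here carried along the green-edge skeleton rather than inside $\Z^2$, with the green self-loops at midpoints of block boundaries serving as natural symmetry-breaking anchors. Once the hierarchy is locally forced, the inner Berger-style simulation of $M$ inside each scale-$s$ square is standard, though it requires careful bookkeeping across the several tracks to ensure that horizontal tape consistency propagates through the red edges and that cross-scale coherence propagates through the green edges.
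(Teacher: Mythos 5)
Your structural instincts are right --- the dyadic hierarchy of green edges is indeed the feature to exploit, and you correctly identify that the main difficulty is forcing each domino on a green edge to ``know'' its scale by purely local rules. But the reduction you propose has two concrete problems. First, the seeding. The vertex set is $\N\times\Z$, so there is no ``bottom row'' to anchor an initial Turing configuration; and, more seriously, the paper proves that the sunny-side-up on this graph is \emph{not} sofic: no tileset can mark a single origin (a limit of translates of a valid tiling along the points $(2^s,2^{s-1})$ kills the marked point). So any argument that threads \emph{one} seeded computation of a fixed machine $M$ through the graph establishes at best undecidability of the \emph{seeded} domino problem, and the usual soficity trick for removing the seed is unavailable here. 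You would have to run bounded computations at all scales with no seed at all (genuine Robinson style), and your write-up conflates that with a single seeded run whose existence is equivalent to non-halting.

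Second, and independently, the scale-$s$ blocks are not squares and do not carry a grid. Two adjacent columns $c$ and $c+1$ with $c$ odd are joined only at heights divisible by $2^{s+1}$, where $2^s$ is the exact power of $2$ dividing $c+1$; so horizontal tape consistency cannot be ``propagated through the red edges'' at every time step, and a Berger-style simulation ``inside each scale-$s$ square'' presupposes connectivity that the graph simply lacks. What the hierarchy actually yields, once it is forced by local rules (the paper's ``Sudoku'' colouring with states $a_i,b_i,c_i,d_i$, and then an unseeded cluster-and-arrow construction), is a simulation of the \emph{hyperbolic} horoball --- equivalently, of arbitrarily wide strips whose rows are spaced $2^s$ apart --- and the paper concludes by reducing from Kari's undecidability of the domino problem on the hyperbolic plane rather than from the halting problem. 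Establishing that simulated connectivity, and doing it without a seed, is the real content of the proof; your proposal leaves exactly those two steps at the level of ``careful bookkeeping''.
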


\subsection{Monadic second-order logic}\label{ss:mso}
Consider an $A$-labelled graph $\Gamma$ with root $x_0$. Monadic
second-order logic is concerned with formulas built from variables
$X,Y,\dots$ representing sets of vertices in $\Gamma$, the constant
$\{x_0\}$, for all $a\in A$ an operation $a\cdot X$ representing all
vertices reachable from $X$ by following an $a$-labelled edge, the
relation $\subseteq$, and usual boolean connectives $\vee,\wedge,\neg$
and quantifiers $\forall,\exists$. The \emph{monadic second-order
  theory} $\Mon(\Gamma)$ consists of all formulas without free variables
that hold in $\Gamma$.

Note that many usual graph-theoretic notions are readily definable in
second-order logic; for example, the empty set is characterized by the
formula $\phi(X)\equiv\forall Y[X\subseteq Y]$; `$X=Y$' is shorthand
for `$X\subseteq Y\wedge Y\subseteq X$'; `$X\cap Y$' is expressed as
$\phi(Z)\equiv Z\subseteq X\wedge Z\subseteq Y\wedge\forall
W[W\subseteq X\wedge W\subseteq Y\Rightarrow W\subseteq Z]$;
singletons are characterized by
$\phi(X)\equiv X\neq\emptyset\wedge\forall Y[Y\subset X\Rightarrow
Y=\emptyset\vee Y=X]$, using which one can represent vertices and
write `$x\in X$' to mean `$\{x\}\subseteq X$'; etc. As an example of
the power of this logic, the graph $\Gamma$ is connected if and only
if
$\forall X[X=\emptyset\vee X=\Gamma\vee\bigvee_{a\in A}a\cdot X\neq
X]$. We refer to~\cite{muller-schupp:mso}*{\S3} for details.

More fundamentally, an instance $\Theta\subseteq B\times A\times B$ of
the domino problem is easily translated to the sentence
\[\exists X_b(b\in B)\bigg[\bigsqcup_{b\in B}X_b=\Gamma\wedge\bigwedge_{(b,a,b')\not\in\Theta}X_{b'}\cap a\cdot X_b=\emptyset\bigg];
\]
and for a ``seeded'' (see~\S\ref{ss:seeded}) domino problem
$(\Theta,b_0)$ one adds the clause `$x_0\in X_{b_0}$'.

The graph $\Gamma$ has \emph{decidable monadic second-order theory} if
there is an algorithm that, given a sentence in the logic of
$A$-labelled graphs, decides whether it holds in $\Gamma$. Thus if
$\Gamma$ has decidable monadic second-order theory then the domino
problem is decidable for $\Gamma$, and the domino problem is contained
in the existential fragment of monadic second-order logic. We shall
improve on Theorem~\ref{thm:bounded} as follows:
\begin{mainthm}[= Theorem~\ref{thm:msodecidable}]
  The monadic second-order theory of a post-critically finite
  self-similar graph is decidable.
\end{mainthm}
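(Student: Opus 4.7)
The plan is to extend the proof of Theorem~\ref{thm:bounded} (= Proposition~\ref{prop:pcfdecidable}) from the single existential sentence coding the domino problem to the full monadic second-order theory, using the Shelah / Feferman--Vaught composition method for MSO. The post-critically finite hypothesis already gives us what we need structurally: the graph $\Gamma$ is the direct limit of \emph{level-$n$ tiles} $T^{(n)}$, each obtained by gluing finitely many copies of $T^{(n-1)}$ along a \emph{fixed finite boundary} prescribed by the transducer $\Phi$. The whole inductive system is therefore governed by a finite amount of data.

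First, for each quantifier rank $k$ and each arity $r$, assign to an $A$-labelled graph $T$ with a distinguished tuple of boundary vertices $\overline p=(p_1,\dots,p_r)$ its MSO \emph{$k$-type} $\tau_k(T,\overline p)$: the set of MSO formulas $\varphi(\overline x)$ of quantifier rank $\leq k$ with $r$ free variables that hold in $T$ when $\overline x$ is interpreted as $\overline p$. By a standard counting argument there are only finitely many such types, and the classical composition theorem for MSO tells us that $\tau_k$ of a graph obtained by gluing two boundary-marked pieces is effectively computable from $\tau_k$ of the pieces and the combinatorial gluing pattern. Applied to $T^{(n)}=\Phi\bigl(T^{(n-1)},\dots,T^{(n-1)}\bigr)$, this yields a computable self-map $F_k$ on a finite set $\mathcal T_k$ of possible $k$-types of pointed level-$n$ tiles. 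Starting from $\tau_k(T^{(0)})$ and iterating $F_k$, the sequence $\tau_k(T^{(n)})$ must eventually enter a cycle, which is detectable in finite time. A given sentence $\varphi$ of quantifier rank $\leq k$ then holds in $\Gamma$ iff it belongs to the limit type, giving an effective decision procedure.

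The main obstacle is the passage from finite tiles to the infinite graph, namely showing that $\tau_k(\Gamma)$ is correctly read off from the eventually-periodic sequence $\bigl(\tau_k(T^{(n)})\bigr)$. In general, MSO is badly behaved under direct limits: a formula can assert existence of a set satisfying a global constraint that no finite approximation witnesses. The rescue is precisely the post-critically finite assumption: the boundary of $T^{(n)}$ inside $T^{(n+1)}$ consists of a bounded number of vertices, so the image of a "winning" set in any $T^{(n)}$ is controlled by a bounded amount of boundary data, and a standard compactness / König's-lemma argument on the finite type set $\mathcal T_k$ lets one reassemble a global witness in $\Gamma$ from compatible local witnesses. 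This is the same mechanism underlying Theorem~\ref{thm:bounded}, applied uniformly across all quantifier ranks rather than at the single rank needed to code dominoes. Once the boundary-aware composition lemma and the limit continuity lemma are in place, decidability of $\Mon(\Gamma)$ follows because all remaining steps — compute the initial type, iterate $F_k$, detect the cycle, test membership of $\varphi$ — are manifestly algorithmic.
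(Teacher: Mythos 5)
Your route is genuinely different from the paper's, and the paper explicitly declined to take it: after proving Proposition~\ref{prop:pcfdecidable} by the tile-by-tile computation you are generalizing, the author remarks that extending that computation to boolean combinations and projections ``seems difficult'' and proves Theorem~\ref{thm:msodecidable} ``by an entirely different method'', namely an MSO interpretation of the Schreier graph inside the tree with vertex set $\bigsqcup_{n\ge0}\sigma^n(G\xi)\times\{n\}$. That tree is a regular image of the configuration graph of a pushdown automaton, so its monadic theory is decidable by Muller--Schupp \cite{muller-schupp:mso}; boundedness of the transducer is used only to decompose each generator into ``elementary'' partial bijections whose graphs are MSO-definable in the tree.

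The gap in your argument is the limit step, and it is precisely the step the author flags as difficult. The finite-boundary composition theorem, the finiteness of the set of $k$-types, and the eventual periodicity of $\tau_k(T^{(n)})$ are all fine; what is not justified is the claim that a sentence holds in $\Gamma$ iff it lies in the ``limit type'', defended by ``a standard compactness / K\"onig's-lemma argument''. Compactness over the finite type set does reassemble a global witness from compatible local ones, but only for a single outermost existential set quantifier: that is exactly why it suffices for Theorem~\ref{thm:bounded}, whose content is the $\Sigma_1$ property ``a tiling exists''. For a sentence with quantifier alternation, say $\exists X\forall Y\,\psi$, the set $X$ assembled by K\"onig's lemma from locally winning $X_n$ must then survive against every subset $Y$ of the \emph{infinite} graph, and the types $\tau_k(T^{(n)},X\cap T^{(n)})$ do not control this; winning the Ehrenfeucht--Fra\"iss\'e game on every finite stage does not win it on the direct limit. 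What you actually need is a composition theorem for $\omega$-iterated gluings along bounded interfaces (B\"uchi/Shelah for generalized sums, or Courcelle's decidability of MSO for equational graphs), asserting that the $k$-type of the limit is computable from the eventually periodic sequence of increment types; that is a genuine theorem whose proofs run through automata on infinite words or trees, i.e.\ essentially the machinery the paper imports wholesale via Muller--Schupp. Relatedly, you never state or use the hypothesis that the ray $\xi$ is eventually periodic, which Theorem~\ref{thm:msodecidable} assumes and which is what makes the $\omega$-sequence of gluing and root data effectively presentable in the first place.
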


\subsection{Barbieri and Sablik's self-similar structures}
Barbieri and Sablik, in~\cite{barbieri-sablik:ssdomino}, consider the
domino problem on self-similar structures. Their definition is tightly
connected to the Euclidean grid: they consider a black/white colouring
of the grid defined by iterating a substitution. They then consider
domino problems on the grid, but for which the adjacency of tiles is
only enforced on black cubes. For example, in dimension $2$, consider
the substitution\\
\centerline{\begin{tikzpicture}[scale=0.3,>=stealth']
    \node[anchor=east] at (0,0.5) {$s:$};
    \foreach\n/\x/\s in {0/0/1,1/3/2,2/7/4,3/13/8} {
      \begin{scope}[xshift=\x cm,yshift=0.5cm-0.5*\s cm,scale=\s]
        \bs{\n}{2}{2}{0/0,1/0,1/1};
      \end{scope}
      \draw[->] (\x+\s+0.5,0.5) -- +(1,0);
    }
    \draw[dotted,thick] (23,0.5) -- +(1,0);
  \end{tikzpicture}}\\
It produces essentially the same graph as the one associated with the
Sierpiński gasket, by colouring the plane via a limit of $s^n$ and
considering the graph with one vertex per black square and an edge
between touching squares.

More formally, fixing a substitution $s$ as above, they consider the
following variant of the domino problem on $\Z^d$: ``Given a set of
colours $B=\{\circ\}\sqcup B_\bullet$ and tileset
$\Theta\subseteq B\times\{-1,0,1\}^d\times B$, is it possible for all $n\in\N$ to tile
$s^n(\tikz{\fill(0,0) rectangle (1.5ex,1.5ex);})$ using $\Theta$ in such a
manner that white boxes are coloured $\circ$ and black boxes
$B_\bullet$?'' (this definition is a slight variant of theirs, and is equivalent if the black boxes are connected.)

They separate substitutions into ``bounded connectivity'',
``isthmus'', and ``grid'' type, according to the number of black paths
crossing $s(\tikz{\fill(0,0) rectangle (1.5ex,1.5ex);})$, and show that
in the ``bounded connectivity'' case the domino problem is decidable,
while in the ``grid'' case the domino problem is undecidable. They
leave open the ``isthmus case'', for which a prototypical substitution
is\\
\centerline{\begin{tikzpicture}[scale=0.3,>=stealth']
    \foreach\n/\x/\s in {0/0/1,1/3/3,2/8/6,3/16/12} {
      \begin{scope}[xshift=\x cm,yshift=0.5cm-0.5*\s cm,scale=\s]
        \bs{\n}{3}{3}{0/0,0/1,0/2,1/1,2/0,2/1,2/2};
      \end{scope}
      \draw[->] (\x+\s+0.5,0.5) -- +(1,0);
    }
    \draw[dotted,thick] (30,0.5) -- +(1.5,0);
  \end{tikzpicture}}\\

The right half of the corresponding graph is essentially the $H$-graph
mentioned in Theorem~\ref{thm:bs}. Using this, we prove:
\begin{thm}[See~\S\ref{ss:isthmus}]
  The domino problem associated with a substitution $s$ is undecidable
  if $s$ contains an \emph{isthmus}: a certain configuration of blocks
  forming at least two strips in one direction and one in another.
\end{thm}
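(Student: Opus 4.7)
The plan is to reduce the seeded domino problem on the $H$-graph, proved undecidable in Theorem~\ref{thm:bs}, to the Barbieri--Sablik substitution domino problem for any $s$ containing an isthmus. The geometric intuition is already visible in the prototypical $3\times 3$ isthmus substitution displayed immediately above the statement: once $s$ is iterated, the graph on the black cells of $s^n(\text{unit square})$ decomposes at every scale into two ``halves'' joined only by a narrow bottleneck, and one of the halves is combinatorially the depth-$n$ truncation of the $H$-graph. The loops of the $H$-graph correspond exactly to the absence of direct adjacencies between horizontally distant black cells lying in different substitution blocks.

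The first step is to make this geometric picture precise: for a substitution $s$ with an isthmus -- two parallel strips in one direction together with a single transverse strip -- establish that the black adjacency graph of $s^n(\text{unit square})$ contains a canonical subgraph isomorphic (perhaps up to a bounded local quotient) to the depth-$n$ truncation of the $H$-graph, glued to its complement along the top-level isthmus. The second step, given a seeded $H$-graph instance $(\Theta,b_0)$ with colour set $B$, is to construct a substitution tileset $\Theta'$ on a colour set $\{\circ\}\sqcup B\sqcup B'\sqcup I$: $\circ$ labels white cells, $B$ carries the $\Theta$-adjacency rules on the ``$H$-half'', $B'$ is a trivial dummy used on the other half, and the small alphabet $I$ marks the successive isthmuses and transports the seed colour $b_0$ from the outermost level down to the $H$-half.

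Third, I would verify the reduction in both directions. A tiling of the infinite $H$-graph by $\Theta$ respecting $b_0$ restricts, for each $n$, to a tiling of the level-$n$ $H$-half and hence, after extending trivially on the dummy half, to a $\Theta'$-tiling of $s^n(\text{unit square})$. Conversely, given a $\Theta'$-tiling of $s^n(\text{unit square})$ for every $n$, a K\"onig/compactness argument on the nested $H$-halves produces an infinite $\Theta$-tiling of the $H$-graph respecting $b_0$, contradicting Theorem~\ref{thm:bs}.

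The main obstacle, I expect, lies in verifying that the ``$H$-half'' substructure at level $n$ embeds faithfully -- with exactly the right edges and \emph{no spurious extra ones} -- into the ``$H$-half'' at level $n+1$: long-range adjacencies across isthmuses at successive scales must correspond precisely to the loops of the $H$-graph. This is a purely local combinatorial check depending on the exact shape of the isthmus; if the thin direction of the isthmus is several cells wide, one must instead work with an appropriate quotient or thickening of the $H$-graph, whose seeded domino problem remains undecidable by essentially the transducer argument used to prove Theorem~\ref{thm:bs}.
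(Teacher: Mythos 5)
Your overall strategy --- locate a copy of the $H$-graph inside the black cells of $s^n(\tikz{\fill(0,0) rectangle (1.5ex,1.5ex);})$, transfer tilesets across that identification, and pass to the limit by compactness --- is the same as the paper's, which argues (very tersely) that the colourings associated with an isthmus substitution contain a quasi-isometrically embedded, locally distinguishable copy of the $H$-graph, so that undecidability transfers from Theorem~\ref{thm:bs}. The acknowledged need to work with a bounded quotient or thickening of the $H$-graph for general isthmuses also matches the paper's ``(possibly deformed) copy'' hedge.

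The genuine gap is your reliance on the \emph{seeded} $H$-graph problem and on ``transporting the seed colour $b_0$ from the outermost level down to the $H$-half''. The paper goes out of its way to prove that the sunny-side-up on the $H$-graph is \emph{not} sofic: the pointed graphs $(\Gamma,(2^s,2^{s-1}))$ converge to $(\Gamma,(0,0))$, so no tileset can force a marker to appear at the root and nowhere else. The same compactness obstruction hits your reduction. In the $\Lambda_s$ (infinite-configuration) formulation of the $s$-domino problem there is no boundary from which to launch the transport at all; and in the finite-box formulation the ``outermost level'' of $s^n(\tikz{\fill(0,0) rectangle (1.5ex,1.5ex);})$ recedes to infinity with $n$, so the K\"onig limit of your level-$n$ tilings need not carry the seed anywhere --- unless you anchor it at a fixed, locally detectable cell (e.g.\ a corner of the box), which you do not do and which is unavailable in the infinite formulation. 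This is precisely why the paper proves Theorem~\ref{thm:bs} for the \emph{unseeded} problem, by a rootless simulation of the hyperbolic horoball. The repair is simple: drop the seed entirely and quote the unseeded Theorem~\ref{thm:bs}; then ``$\Theta$ tiles the $H$-graph'' is equivalent, by restriction in one direction and compactness in the other, to ``$\Theta'$ tiles every $s^n(\tikz{\fill(0,0) rectangle (1.5ex,1.5ex);})$'', with no seed to transport. One further point your finite-box argument skips: for the Barbieri--Sablik formulation one must also rule out the trivial all-$\circ$ solution and ensure the $H$-structure occurs in \emph{every} nontrivial configuration, which the paper extracts from (almost) minimality of $\Lambda_s$.
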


\subsection{Some conjectures and remarks}
\emph{In fine}, all proofs of undecidability of the domino problem, or
more generally of the monadic second-order logic of a graph, seem to
rely on ``space-time diagrams'': there are subsets $C_0,C_1,\dots$ of
the graph on which the stateset of a machine (be it a Turing machine,
or one of Kari's piecewise-affine machines~\cite{kari:revisited}) can
be represented; and there are enough connections in the graph between
$C_t$ and $C_{t+1}$ so that the one-step evolution of the machine can
be logically enforced. If the $C_t$ are actually subgraphs and the
machine's state is represented by a bi-infinite tape, then each $C_t$
is a copy of $\Z$ and the space-time diagram is a copy of
$\Z\times\N$. (Note that there are undecidable problems that do not
reduce to undecidability of the halting problem --- one speaks of
Turing degrees strictly between $\mathbf0$ and $\mathbf{0'}$,
see~\cite{muchnik:intermediate} --- but I am not aware of any natural
such example, a fortiori as a tiling problem.)

In terms of Schreier graphs, this means that if a subgroup $\Z^2\le G$
acts freely on an orbit $G\cdot\xi$ then the domino problem on the
corresponding Schreier graph is undecidable; and much more general
statements are true. Following~\cite{jeandel:translation}, if $G$
contains a direct product $H_1\times H_2$ of two infinite, finitely
generated groups, and each $H_i$ with $i=1,2$ has all orbits infinite
in its action on $H_{3-i}\backslash(G\cdot\xi)$, the space of orbits
of the other, then the domino problem on the Schreier graph of
$G\cdot\xi$ is also undecidable.

In~\cite{bartholdi-salo:ll} we consider the domino problem on a Cayley
graph $\Gamma$ which does not contain any grid, the ``lamplighter
group'' $\Z/2\wr\Z$, and show that nevertheless its ``seeded''
(see~\S\ref{ss:seeded}) domino problem is undecidable. The main,
general idea is that an auxiliary domino problem may be used to mark
some vertices and some sequences of edges to simulate a grid within
$\Gamma$. (In fact, it would be equally good to simulate any graph
with unsolvable domino problem, but somehow we always fall back on the
grid). This is the argument used in~\S\ref{ss:undecidable} to prove
Theorems~\ref{thm:longrange} and~\ref{thm:bs}; though we do not make
use of the general results of~\cite{bartholdi-salo:ll}, rather
repeating the argument in each specific case.

It follows from Seese's theorem~\cite{seese:mso} that the monadic
second-order theory of a graph $\Gamma$ is undecidable if it has
unbounded \emph{treewidth} (see~\S\ref{ss:treewidth}); equivalently, if
$\Gamma$ contains arbitrarily large grids as minors. On the other
hand, Ville Salo pointed out to me that, if $\Gamma$ contains
sufficiently sparse grids then the domino problem may be
decidable. For concreteness, consider a mutilated grid $\Z\times\N$ in
which, at height $(j-1)!+1,\dots,j!$, the horizontal edges wrap in
cycles of length $2^j$; then a tileset tiles this graph if and only if
it tiles the plane periodically. This kind of phenomenon does not seem
to be possible for Schreier graphs of self-similar groups:
\begin{conj}
  For $G$ a contracting self-similar group, the following are equivalent:
  \begin{enumerate}
  \item the domino problem is decidable on all Schreier graphs of $G$;
  \item the monadic second-order theory is decidable on all Schreier
    graphs of $G$;
  \item the the limit space of $G$ is finitely ramified.
  \end{enumerate}
\end{conj}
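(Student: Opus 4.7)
The plan is to dispatch the three implications separately, treating (1)$\Rightarrow$(3) as the main obstacle.

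The implication (2)$\Rightarrow$(1) is immediate from the translation of the domino problem as an existential monadic second-order sentence displayed in \S\ref{ss:mso}, and requires no further argument. For (3)$\Rightarrow$(2), I would extend the proof of Theorem~\ref{thm:msodecidable}. Finite ramification of the limit space $L$ of $G$ should translate, through the dictionary between Nekrashevych's and Kigami's pictures clarified in \S\ref{ss:ss}, into a uniform bound on the size of the intersection between any two level-$n$ tiles of the associated Kigami space $K$. This gives every Schreier graph of $G$ a hierarchical tree decomposition of uniformly bounded width, indexed by the rooted tree of address words. Over this decomposition one runs the tree-automaton argument used for post-critically finite graphs, provided the ``boundary type'' of a subtile takes values in a finite set; this is the point at which one invokes contractingness (only finitely many germs occur in the nucleus of $G$), and reducing the potentially infinite orbit data to such a finite type system is where I expect the small technical difficulties to lie.

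The main obstacle is (1)$\Rightarrow$(3), which by contraposition asks: when the limit space is not finitely ramified, exhibit a Schreier graph of $G$ on which the domino problem is undecidable. Non-finite-ramification means that two tiles can meet in arbitrarily many points; translated to the dynamical side, this says that arbitrarily large ``seams'' glue neighbouring pieces in the tile graph. My strategy would be to extract from such seams a family of subgraphs $C_0,C_1,\dots$ of one fixed Schreier graph, each $C_t$ a long interval of $\Z$, connected to $C_{t+1}$ by bounded-range edges, so that a space-time diagram in the sense of \S\ref{ss:introu} fits inside. The hierarchical self-similar structure should organise the seams: a seam at level $k$ fractures into seams at level $k-1$, and one uses this fracturing to interleave a ``time'' coordinate with the ``space'' coordinate, producing the desired grid-like scaffolding. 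One then imports the technique of \S\ref{ss:undecidable}, rehearsed for Theorems~\ref{thm:longrange} and~\ref{thm:bs}, to let an auxiliary tileset mark this scaffolding and transport a tiling of $\Z^2$ onto the graph.

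The hardest single step is the geometric one: to promote the abstract statement ``unboundedly many intersection points between level-$n$ tiles'' into the stronger statement ``some orbit of $G$ on the boundary Cantor set supports a uniformly definable, effectively simulable grid-like skeleton''. Seese's theorem already yields undecidability of the monadic second-order theory from unbounded treewidth, so the MSO half of (1)$\Rightarrow$(3) reduces to showing that non-finite-ramification of $L$ forces unbounded treewidth in some Schreier graph, via Kigami's ancestor structure. But the domino problem demands more than a grid minor; it demands a simulable grid subgraph. I expect this upgrade to require a case analysis of the nucleus of $G$ and of the possible ``contact components'' between tiles in the spirit of \cite{nekrashevych:ssg}, together with an orbit-counting argument showing that if no single orbit carries such a skeleton then the post-critical set is already finite, contradicting the hypothesis.
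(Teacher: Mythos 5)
The statement you are trying to prove is explicitly labelled a \emph{conjecture} in the paper: the author offers no proof of it, and in the parenthetical remark following it even leaves open what ``finitely ramified'' should precisely mean, only \emph{suspecting} that a post-critically finite digit tile can always be realized by a bounded transducer group. Your text is likewise not a proof but a programme: every load-bearing step is introduced with ``should translate'', ``I would extend'', ``I expect''. Only the implication (2)$\Rightarrow$(1) is actually established (it is indeed immediate from the encoding of a tileset as an existential monadic sentence in \S\ref{ss:mso}).

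The concrete gaps are these. For (3)$\Rightarrow$(2) you silently identify ``the limit space is finitely ramified'' with ``$G$ is (conjugate to) a post-critically finite group'', which is exactly the bridge the author flags as unproven; moreover Theorem~\ref{thm:msodecidable} is only stated for \emph{eventually periodic} rays $\xi$, so even granting that bridge you do not get decidability on \emph{all} Schreier graphs, and the paper itself says the MSO situation for general rays ``is a bit less clear''. For (1)$\Rightarrow$(3), the step you defer --- upgrading ``two tiles meet in arbitrarily many points'' to ``some single orbit carries a tileset-definable, simulable grid skeleton'' --- is the entire content of the conjecture. The paper only achieves such simulations for two specific graphs, by ad hoc Sudoku-style tilesets tailored to their transducers, and explicitly states that ``a simulation has to be defined \emph{ad hoc}'' with no general claim attempted; your proposed ``case analysis of the nucleus'' and ``orbit-counting argument'' are named but not carried out. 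Note also that even after a grid is simulated, passing from the seeded to the unseeded domino problem is nontrivial: the paper shows the sunny-side-up can fail to be sofic (as for the $H$-graph), so Lemma~\ref{lem:ssu} is not always available and a separate rootless argument is needed. As it stands, your text is a reasonable research plan for an open problem, not a proof.
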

(We may take ``finitely ramified'' as meaning there is a discrete set
of local cut points. For example, if $G$ is conjugate to a bounded
transducer group, then this will be the case. Conversely, I suspect
that, if a digit tile is a post-critically finite fractal then there
exists a bounded group realizing it.)

\subsection{Acknowledgments}
I am grateful to Bruno Courcelle and Ville Salo for helpful comments
and generous replies to my (sometimes obscure or naive) questions.

%%%%%%%%%%%%%%%%%%%%%%%%%%%%%%%%%%%%%%%%%%%%%%%%%%%%%%%%%%%%%%%% 
\section{The domino problem on graphs}
A \emph{graph} is a pair $\Gamma=(V,E)$ of sets called \emph{vertices}
and \emph{edges}, with for every $e\in E$ a \emph{head} and
\emph{tail} $e^+,e^-\in V$. An \emph{unoriented graph} has,
furthermore, an involution $e\mapsto e'$ on $E$ such that
$(e')^\pm=e^\mp$. For a finite set $A$ of labels, an
\emph{$A$-labelled graph} is a graph endowed with a labelling
$\lambda\colon E\to A$ of its edges. A labelling is \emph{proper} if
no two edges have the same label and tail.  By contrast, for a finite
set $B$ of colours, a \emph{$B$-colouring} is a map $V\to B$, namely a
colouring of the $\Gamma$'s vertices.

The basic example of labelled graph we have in mind is a
\emph{Schreier graph}: for a finitely generated group
$G=\langle A\rangle$ acting on a set $X$, consider the graph with
vertex set $X$ and edge set $A\times X$, with $(a,x)^-=x$ and
$(a,x)^+=a\cdot x$ and $\lambda(a,x)=a$. Note that this defines a
properly labelled graph. By extension, if $\Gamma$ is a properly
$A$-labelled graph, we write $a\cdot x$ for the head of the edge
labelled $a$ with tail $x$, if it exists. The \emph{Cayley graph} is
the Schreier graph of a group acting on itself by left-translation.

If furthermore $A=A^{-1}$ is symmetric, then the Schreier graph is
unoriented, with $(a,x)'=(a^{-1},a\cdot x)$. Consider for example the
group $G=\Z^2$ acting on itself and generated by
$\{(0,\pm1),(\pm1,0)\}$; then the corresponding Schreier graph is the
usual square grid.

The \emph{domino problem} for an $A$-labelled graph
$\Gamma=(V,E,\lambda)$ is the following decision problem: \emph{given
  a finite $A$-labelled graph $\Delta$, does there exist a graph
  morphism $\Gamma\to\Delta$?}

Thus an instance of the domino problem is a finite set $B$ (the vertex
set of $\Delta$) and a subset $\Theta$ of $B\times A\times B$ (the
edges of $\Delta$, identified by their initial vertex, label and final
vertex). The output should be ``yes'' if there exists a $B$-colouring
$\tau\colon V\to B$ of $\Gamma$'s vertices such that for every edge
$e$ of $\Gamma$ one has
$(\tau(e^-),\lambda(e),\tau(e^+))\in\Theta$. We refer to $\Theta$ as a
\emph{tileset}, and to the valid colouring $\tau\colon V\to B$ as a
\emph{tiling}.

The reader may already be familiar with ``Wang tiles''; these are
squares with colours written on their four sides, and the classical
domino problem in the plane is to determine, for a given set of Wang
tiles, whether they can be used to cover the plane with matching
colours. Let us connect this formalism with the above definition.

Formally, a set of Wang tiles, for a given set of colours $C$, is a
subset $W\subseteq C^{\{S,E,N,W\}}=C^4$, and $W$ \emph{tiles} if there
exists a map $\tau\colon\Z^2\to W$ with $\tau(m,n)_N=\tau(m,n+1)_S$
and $\tau(m,n)_E=\tau(m+1,n)_W$ for all $(m,n)\in\Z^2$. Consider the
Cayley graph $\Gamma$ of $\Z^2$ generated by $\{(0,\pm1),(\pm1,0)\}$,
and construct the graph $\Delta$ with vertex set $W$, an edge labelled
$(1,0)$ from $w$ to $w'$ (and one labelled $(-1,0)$ from $w'$ to $w$)
whenever $w_E=(w')_W$, and an edge labelled $(0,1)$ from $w$ to $w'$
(and one labelled $(0,-1)$ from $w'$ to $w$) whenever
$w_N=(w')_S$. Then $W$ tiles precisely when there exists a graph
morphism $\Gamma\to\Delta$.

By a classical argument, an instance of the domino problem may specify
legal colourings of larger subgraphs than those given by the $\Theta$
above. Let us, for simplicity, restrict ourselves to the setting of
Schreier graphs: let $G=\langle A\rangle$ be a group acting on a set
$X$. If $B$ is a given set of colours, a \emph{pattern} is an element
of $B^F$ for some finite subset $F$ of $G$; or, more precisely, for
some finite subset $F$ of the free group on $A$, since this is the
only way in which elements of a general finitely generated group $G$
may be specified. An instance of the domino problem is then a
collection $\mathscr F$ of ``forbidden'' patterns for some set $B$ of
colours, and the required output is whether there exists a colouring
$\tau\colon X\to B$ that avoids all patterns in $\mathscr F$: for
every $x\in X$ and every pattern $\pi\colon F\to B$ in $\mathscr F$,
the assignment $F\ni f\mapsto\tau(f x)\in B$ is not equal to $\pi$.
\begin{lem}\label{lem:pattern}
  The pattern formulation of the domino problem is equivalent to the
  original one.
\end{lem}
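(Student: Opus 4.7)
The plan is to prove both implications, with the easy direction going from tilesets to forbidden patterns and the substantive direction going the other way via a block-coding (``colour-expansion'') construction borrowed from symbolic dynamics.

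For the easy direction, given a tileset $\Theta\subseteq B\times A\times B$, take as forbidden patterns the collection $\mathscr F_\Theta$ of all maps $\pi\colon\{e,a\}\to B$ with $(\pi(e),a,\pi(a))\notin\Theta$, as $a$ ranges over $A$. A colouring $\tau\colon X\to B$ satisfies the $\Theta$-constraint at the edge labelled $a$ with tail $x$ exactly when the pattern $f\mapsto\tau(fx)$ on $\{e,a\}$ does not lie in $\mathscr F_\Theta$, so this direction is immediate.

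For the substantive direction, fix a finite collection $\mathscr F$ of forbidden patterns on finite subsets of the free group on $A$; let $r$ be the maximum word-length of a pattern-domain element, and let $F$ be the ball of radius $r$ in the free group (so $F$ contains $e$ and is closed enough that for each $a\in A$ there are many $f\in F$ with $fa\in F$). Enlarge the colour set to $B':=B^F$, and define a new tileset $\Theta'\subseteq B'\times A\times B'$ consisting of the triples $(\beta,a,\beta')$ such that (i) $\beta'(f)=\beta(fa)$ for every $f\in F$ with $fa\in F$, and (ii) neither $\beta$ nor $\beta'$, restricted to the domain of some $\pi\in\mathscr F$, equals $\pi$. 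This is a bona fide instance of the original domino problem over the alphabet $B'$.

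The verification splits into two lemmas. From a $\Theta$-valid colouring $\tau$ I manufacture $\tau'(x)(f):=\tau(fx)$; condition (i) follows because $\tau'(ax)(f)=\tau(f\cdot ax)=\tau(fa\cdot x)=\tau'(x)(fa)$, and (ii) is exactly the assumption that $\tau$ avoids $\mathscr F$. Conversely, from a $\Theta'$-valid colouring $\tau'$ I set $\tau(x):=\tau'(x)(e)$ and show by induction on $|f|$ that $\tau'(x)(f)=\tau(fx)$ for all $f\in F$: writing $f=ga$ with $|g|<|f|$, condition (i) gives $\tau'(x)(f)=\tau'(ax)(g)$, and the inductive hypothesis applied at $ax$ identifies this with $\tau(g\cdot ax)=\tau(fx)$. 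Condition (ii) then translates into $\tau$ avoiding every pattern in $\mathscr F$ at every basepoint $x\in X$.

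The main obstacle I anticipate is purely bookkeeping: getting the left/right conventions straight in the adjacency constraint $\beta'(f)=\beta(fa)$ (this is where one must be careful whether the action is written on the left or the right, and whether $F$ contains $A^{-1}$-letters, in which case the ``neighbour'' $\beta'$ has to be chosen using an $a^{-1}$-labelled edge). Once the constraint is written with the correct handedness, the induction closing up in $F$ requires only that $F$ be closed under taking prefixes along generator-edges used in the induction, which is guaranteed by taking $F$ to be a ball in the word metric. No further compactness or König-style argument is needed, since both instances involve finitely many local constraints of bounded radius.
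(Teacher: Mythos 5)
Your proposal is correct and follows essentially the same route as the paper: block-code into the alphabet $B^{\overline A}$ indexed by a ball, impose the overlap condition $\beta'(f)=\beta(fa)$ on edges, and recover the consistency $\tau'(x)(f)=\tau(fx)$ by induction on word length. The only (cosmetic) difference is that you encode local admissibility as a clause of the tileset rather than by shrinking the alphabet to the admissible configurations $\overline B$ as the paper does, and you rightly flag the same handedness bookkeeping that the paper handles with the condition $\beta(g)=\beta'(ga^{-1})$ on $g\in\overline A\cap\overline A a$.
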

\begin{proof}
  Consider first an instance of the domino problem given by
  $\Theta\subseteq B\times A\times B$. Then $\Theta$ is a set of
  patterns: the element $(b,a,b')$ is the pattern supported on
  $\{1,a\}$ with values $b,b'$ at $1,a$ respectively. Let $\mathscr F$
  be the set of patterns associated with
  $(B\times A\times B)\setminus\Theta$; then a vertex colouring avoids
  $\mathscr F$ if and only if the graph's edges are coloured by
  $\Theta$.

  Conversely, let $\mathscr F$ be a finite collection of forbidden
  patterns. There exists $R\in\N$ such that all patterns in
  $\mathscr F$ are supported on
  $\overline A\coloneqq\{1\}\cup A\cup\cdots\cup A^R$, the ball of
  radius $R$ in $G$; set then
  \[\overline B\coloneqq \{\beta\in B^{\overline A}: \text{ for all $(\pi\colon F\to B)\in\mathscr F$ we have }\pi\neq \beta\restriction F\}.
  \]
  Let $\Theta\subseteq\overline B\times A\times\overline B$ be the set
  of $(\beta,a,\beta')$ such that $\beta(g)=\beta'(g a^{-1})$ for all
  $g\in\overline A\cap\overline A a$.

  Given a vertex colouring $\overline\tau\colon X\to\overline B$ with
  edges coloured by $\Theta$, we consider the vertex colouring
  $\tau\colon X\to B$ given by $\tau(x)=\overline\tau(x)(1)$; then
  $\tau$ avoids all patterns in $\mathscr F$. Conversely, given
  $\tau\colon X\to B$ avoiding all patterns in $\mathscr F$, define
  $\overline\tau\colon X\to\overline B$ by
  $\overline\tau(x)(g)=\tau(g x)$ for all $x\in X,g\in\overline A$;
  then all edges are coloured by $\Theta$: for $a\in A$ we have
  \[\overline\tau(x)(g)=\tau(g x)=\tau(g a^{-1}a x)=\overline\tau(a x)(g a^{-1})\]
  so $(\overline\tau(x),a,\overline\tau(a x))\in\Theta$.

  We finally check that the maps $\tau\mapsto\overline\tau$ and
  $\overline\tau\mapsto\tau$ are inverses of each other. Starting from
  $\tau$, we get
  $(\text{new}\tau)(x)=\overline\tau(x)(1)=\tau(x)$. Consider
  conversely a valid tiling $\overline\tau\colon X\to\overline B$; it
  suffices to prove $\overline\tau(x)(g)=\overline\tau(g x)(1)$ for
  all $x\in X,g\in\overline A$, since then
  $(\text{new}\overline\tau)(x)(g)=(\text{new}\overline\tau)(g
  x)(1)=\tau(g x)=\overline\tau(x)(g)$. We prove the claim by
  induction over the minimal $r\in\N$ such that $g\in A^r$, the case
  $r=0$ being trivial. For $r>0$, write $g=h a$ with $h\in A^{r-1}$ and
  $a\in A$. Then
  $\overline\tau(g x)(1)=\overline\tau(h a x)(1)=\overline\tau(a
  x)(h)$ by induction; then applying the condition $\Theta$ on the
  edge between $x$ and $a x$ gives as required
  \[\overline\tau(x)(g)=\overline\tau(a x)(g a^{-1})=\overline\tau(a x)(h)=\overline\tau(g x).\qedhere\]
\end{proof}

The general formulation involves ``patches'': a \emph{patch} $\Delta$
is a rooted, $A$-labelled, $B$-coloured finite graph, and a patch is
said to \emph{match} a graph colouring $\tau\colon\Gamma\to B$ at a
vertex $v$ if there exists a graph morphism $\Delta\to\Gamma$ mapping
$\Delta$'s root to $v$ and preserving labels and colours. Then
Lemma~\ref{lem:pattern} says that the domino problem may be specified
by a collection of forbidden patches.

In this manner, the equivalent formulation of the tiling problem by
Wang tiles, for an undirected Schreier graph $X$, is as follows. An
instance of the problem is a finite set $B$ of colours and a set
$W\subseteq B^A$ of Wang tiles. A valid colouring is a map
$\tau\colon X\to W$ such that $\tau(x)(a)=\tau(a\cdot x)(a^{-1})$ for
all $x\in X,a\in A$.

It is sometimes interesting to consider the \emph{space} of tilings of
a graph given by a tileset: it is the space of graph morphisms
$\Gamma\to\Delta$, with its natural topology. Assuming that
$\Gamma=(V,E,\lambda)$ is properly labelled, for a tileset $\Theta$ we
define
\[X_\Theta\coloneqq\{\tau\in B^V:\forall e\in E: (\tau(e^-),\lambda(e),\tau(e^+))\in\Theta\}.
\]
It is a closed subspace of $B^V$ for the product topology. It is also
invariant under the automorphism group of $\Gamma$, so is a
``$\Aut(\Gamma)$-flow of finite type''. (If furthermore $\Aut(\Gamma)$
were simply transitive on $V$, it would be an $\Aut(\Gamma)$-subshift
of finite type.)

The following result says that certain local configurations (for
example closed paths) may be marked in labelled graph by means of a
tileset, if we accept that sometimes more vertices than desired will
be marked:
\begin{lem}\label{lem:localmark}
  For every finite, rooted, labelled graph $(\Delta,x_0)$, there exists
  a tileset $\Theta\subseteq B\times A\times B$ and a subset
  $C\subseteq B$, such that
  \begin{enumerate}
  \item every $B$-colouring $\tau$ matching $\Theta$ satisfies
    \[\tau^{-1}(C)\supseteq\{v\in V:\exists(\Delta,x_0)\to(\Gamma,v)\};\]
  \item there exists a $B$-colouring $\tau$ matching $\Theta$ and satisfying
    \[\tau^{-1}(C)=\{v\in V:\exists(\Delta,x_0)\to(\Gamma,v)\}.\]
  \end{enumerate}
\end{lem}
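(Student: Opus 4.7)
The plan is to construct everything first in the ``forbidden patches'' formulation recalled just before the statement, and only at the end invoke the patch version of Lemma~\ref{lem:pattern} to turn the result into a genuine tileset $\Theta\subseteq B\times A\times B$. This lets me bypass any explicit choice of spanning tree in $\Delta$ or of free-group words for its paths, and it puts the combinatorial content of the lemma in one line.

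At the patch level I take the colour set to be $\{0,1\}$ with $C=\{1\}$. The forbidden family consists of all $B$-colourings of $(\Delta,x_0)$ whose value at the root $x_0$ is $0$; there are exactly $2^{|V(\Delta)|-1}$ such patches, one per choice of colouring of the remaining vertices of $\Delta$. Nothing is forbidden whose root is coloured $1$, so the tileset places no demand whatsoever on vertices that are merely ``marked''.

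For (1), let $\tau$ be any colouring of $\Gamma$ avoiding this family, and let $v\in V$ admit a morphism $\phi\colon(\Delta,x_0)\to(\Gamma,v)$. The pullback $\pi(y)\coloneqq\tau(\phi(y))$ is a $B$-colouring of $\Delta$; if $\tau(v)=0$ then $\pi(x_0)=0$, so $\pi$ lies in the forbidden family, and $\phi$ itself witnesses a match of $\pi$ at $v$, a contradiction. Hence $\tau(v)=1$. For (2), declare $\tau(v)=1$ exactly when such a morphism exists. Any forbidden patch $\pi$ matching this $\tau$ at some $v$ via some $\phi$ would exhibit a morphism $(\Delta,x_0)\to(\Gamma,v)$ and thus force $\tau(v)=1$, contradicting $\pi(x_0)=0$; so $\tau$ is valid, and by construction $\tau^{-1}(C)$ coincides with the prescribed set.

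Finally, the patch version of Lemma~\ref{lem:pattern} re-encodes this finite family of forbidden patches as an ordinary tileset $\Theta$ over a (possibly larger) colour set $\overline B$, and taking $\overline C\subseteq\overline B$ to be those $\overline B$-colours whose root-value equals $1$ transports both properties across unchanged. The only point meriting a second look is that condition (1) is an inclusion $\supseteq$ rather than equality, and that is exactly what allows us to forbid only the patches with $x_0$ unmarked and leave the behaviour at other vertices entirely free; so there is no serious obstacle beyond trusting the patch formulation of Lemma~\ref{lem:pattern}.
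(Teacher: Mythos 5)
Your argument at the level of forbidden patches is correct and pleasantly short, but the step you defer to the end --- ``the patch version of Lemma~\ref{lem:pattern}'' --- is precisely where the entire content of this lemma lives, and it is not available to you. Lemma~\ref{lem:pattern}, as stated and proved, converts forbidden \emph{patterns} into tilesets, where a pattern is a map $\pi\colon F\to B$ on a finite subset $F$ of the free group on $A$, evaluated at $v$ via $f\mapsto\tau(f v)$. Such patterns are blind to coincidences: they cannot express, let alone test, whether $f_1v=f_2v$. A patch, by contrast, matches at $v$ only when the structural identifications of $\Delta$ (its loops and cycles) actually hold in $\Gamma$ at $v$. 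Concretely, take $\Delta$ to be a single $a$-labelled loop. Your forbidden family is the one patch ``loop vertex coloured $0$''; the only pattern one can extract from it is $\pi(1)=\pi(a)=0$ on $F=\{1,a\}$, and forbidding that pattern also forbids two consecutive $0$'s along a \emph{non-closed} $a$-edge. On a loop-free orbit this kills the all-$0$ colouring, which is exactly the colouring your item (2) needs in order to realize $\tau^{-1}(C)=\emptyset$. So the naive patch-to-pattern translation destroys property (2), and there is no other translation on the colour set $\{0,1\}$.

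Repairing this requires extra ``certificate'' colours witnessing that a coincidence does \emph{not} occur, and that is the paper's actual proof: with $B=\{0,1,2,3\}$ and $C=\{0\}$, the tileset admits $(0,a,0)$ as the only diagonal $a$-domino, so every $a$-loop is forced into colour $0$, while loop-free $a$-orbits (cycles of length $\ge2$ or bi-infinite paths) can be properly coloured by alternating $1,2,3$ and thus kept out of $C$; the general $\Delta$ is then handled by applying this to the relator words of $\Delta$ via Lemma~\ref{lem:pattern}. Note also that the statement ``any finite family of forbidden patches is encodable by a tileset'' is, modulo Lemma~\ref{lem:pattern}, \emph{equivalent} to Lemma~\ref{lem:localmark} (your two paragraphs give one direction; the other follows by marking morphism images and then forbidding marked-and-colour-matching configurations). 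The paper's remark that the domino problem ``may be specified by a collection of forbidden patches'' should therefore be read as a consequence of this lemma, not as an input to it; invoking it as a black box makes your proof circular.
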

\begin{proof}
  It suffices to consider the case of $\Delta$ being a single,
  $a$-labelled loop; the general case follows from
  Lemma~\ref{lem:pattern}. Select then $B=\{0,1,2,3\}$ and $C=\{0\}$,
  and
  \[\Theta=\{(0,a,0),(i,a,j)\forall i\neq j\in\{1,2,3\},(i,x,j)\forall x\neq a\in A\forall i,j\in\{0,1,2,3\}\}.
  \]
  No condition is imposed on $x$-labelled edges for $x\neq a$; all
  $a$-labelled loops must be coloured $0$ while every $a$-labelled
  path (closed or not) may be labelled either entirely by $0$, or
  alternating in $1,2,3$ (we need three colours to cover all the cases
  of an even-length cycle, an odd-length cycle, or an open path).
\end{proof}

\subsection{Seeded domino problems}\label{ss:seeded}
We shall also consider a variant of the domino problem in which the
graph has a distinguished vertex, which has to be given a specific
colour. Here is a formulation in terms of a rooted Schreier graph
$(X,x_0)$: an instance of the \emph{seeded} domino problem is a
collection $\Theta\subseteq B\times A\times B$ of dominoes with a
chosen $b_0\in B$; the question is whether there exists an assignment
$\tau\colon X\to B$ with $\tau(x_0)=b_0$ and
$(\tau(x),a,\tau(a x))\in\Theta$ for all $x\in X,a\in A$. If the
seeded domino problem is solvable on a graph $\Gamma$, then so is the
domino problem (by querying the seeded domino problem with all
possible choices of colour at $x_0$). It could well be that each time
the seeded domino problem is unsolvable, so is the domino problem;
though for graphs such as the square grid $\Z^2$, or tessellations of
the hyperbolic plane, it took substantially more effort to prove the
latter than the former.

Let $\Gamma=(V,E,\lambda)$ be an $A$-labelled graph, and let $v\in V$
be a distinguished vertex. The \emph{sunny-side-up} is the subset
$S_v\subseteq\{0,1\}^V$ consisting of colourings $V\to\{0,1\}$ with a
single `$1$' at an arbitrary position in the $\Aut(\Gamma$)-orbit of
$v$; and additionally, if the $\Aut(\Gamma)$-orbit of $v$ is infinite,
the all-$0$ configuration. In particular, if $\Gamma$ is infinite and
vertex-transitive, for instance a Cayley graph, then $S_v$ is
naturally in bijection with the one-point compactification
$V\cup\{\infty\}$ of $V$. We call $S_v$ \emph{sofic} if there exists a
tileset $\Theta\subset B\times A\times B$ and a map
$\pi\colon B\to\{0,1\}$ such that $S_v=\pi\circ X_\Theta$, namely
$S_v$ is obtained by projecting all valid $\Theta$-tilings through
$\pi$.
\begin{lem}\label{lem:ssu}
  Let $\Gamma$ be an $A$-labelled graph, with $v$ a vertex as above,
  and assume that $v$ has a finite $\Aut(\Gamma)$-orbit.  If the
  sunny-side-up $S_v$ is sofic then the seeded and unseeded domino
  problems are reducible to each other.
\end{lem}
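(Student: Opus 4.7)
The plan is to prove the two directions of the reduction separately. The direction from unseeded to seeded is already sketched in the preceding paragraph of the paper: given an unseeded instance $\Theta\subseteq B\times A\times B$, one simply queries the seeded solver with each of the finitely many choices of seed colour $b_0\in B$, and returns ``yes'' if any query succeeds. So the work is entirely in the reverse direction, where I want to use the sofic hypothesis on $S_v$.

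For seeded reducing to unseeded, the idea is to use a sofic presentation of $S_v$ as a mechanism that marks a single vertex in the $\Aut(\Gamma)$-orbit of $v$. Fix $\Theta_S\subseteq B_S\times A\times B_S$ and $\pi\colon B_S\to\{0,1\}$ witnessing that $S_v$ is sofic. Given a seeded instance $(\Theta,b_0)$, I would construct an unseeded tileset $\Theta'$ on the alphabet $B\times B_S$ which (i) enforces $\Theta$ on the first coordinate, (ii) enforces $\Theta_S$ on the second coordinate, and (iii) couples the two by demanding that whenever $\pi$ of the second coordinate equals $1$, the first coordinate equals $b_0$.

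To verify correctness, observe that in any valid $\Theta'$-tiling, projecting the second coordinate through $\pi$ yields some element of $S_v$. Because $v$ has finite $\Aut(\Gamma)$-orbit, the definition of $S_v$ excludes the all-$0$ configuration, so this projection has exactly one $1$ at some $w\in\Aut(\Gamma)\cdot v$; the coupling then forces the first coordinate at $w$ to be $b_0$. Pulling back by an automorphism $\alpha\in\Aut(\Gamma)$ with $\alpha(v)=w$ converts the first-coordinate projection into a $\Theta$-tiling that places $b_0$ at $v$ itself, solving the seeded instance. Conversely, given a seeded solution $\tau$, pair it with any $\Theta_S$-tiling lifting the $S_v$-configuration whose unique $1$ sits at $v$; this yields a valid $\Theta'$-tiling.

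The main subtlety, rather than a real obstacle, is the role of the finite-orbit hypothesis: without it, $S_v$ would contain the all-$0$ configuration, the coupling condition in $\Theta'$ would become vacuously satisfiable by choosing this trivial sofic layer everywhere, and the reduction would collapse (any $\Theta$-tiling without the seed constraint would lift to a $\Theta'$-tiling). The other ingredient worth spelling out is the use of $\Aut(\Gamma)$-transitivity on the orbit in order to relocate the forced $b_0$ from an arbitrary $w$ back to the distinguished root $v$.
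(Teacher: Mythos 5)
Your proposal is correct and follows essentially the same route as the paper: the easy direction by querying all seed colours, and the hard direction via the product tileset on $B\times B_S$ enforcing both tilesets coordinatewise together with the coupling $\pi(b_1)=1\Rightarrow b_0=b$. Your additional remarks on why the finite-orbit hypothesis is needed (to exclude the all-$0$ configuration from $S_v$) and on pulling back by an automorphism to move the seed from $w$ to $v$ merely make explicit what the paper leaves implicit.
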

\begin{proof}
  The unseeded tiling problem can always be solved by querying
  finitely many times the seeded tiling problem, with all choices of
  colours. Conversely, given an instance of the seeded tiling problem
  $\Theta_0\subset B_0\times A\times B_0$ and distinguished colour
  $b\in B$, let $\Theta_1\subset B_1\times A\times B_1$ and
  $\pi\colon B_1\to\{0,1\}$ be an encoding of $S_v$, and consider the
  tileset
  \[\Theta\coloneqq\{((b_0,b_1),a,(b_0',b_1')):(b_0,a,b_0')\in\Theta_0,(b_1,a,b_1')\in\Theta_0,\pi(b_1)=1\Rightarrow b_0=b\}.\]
  Any valid tiling by $\Theta$ consists of a valid tiling of
  $\Theta_0$ which furthermore has $b$ at a position marked by $S_v$,
  so $\Theta$ tiles if and only if $\Theta_0$ tiles with colour $b$ at
  $v$.
\end{proof}

%%%%%%%%%%%%%%%%%%%%%%%%%%%%%%%%%%%%%%%%%%%%%%%%%%%%%%%%%%%%%%%% 
\section{Self-similar graphs and spaces}\label{ss:ss}
Let us first recall how graphs appear in connection with self-similar
fractals. Following Kigami~\cite{kigami:harmonic}, consider a
compact set $K$ with a collection $\{F_s:s\in S\}$ of injective
continuous self-maps, and assume that there is a surjective continuous
map $\pi\colon S^{-\N}\to K$ with $\pi(w s)=F_s(\pi(w))$ for all
$s\in S,w\in S^{-\N}$. (The reason we write sequences as left-infinite
will soon become clear.) The map $\pi$, if it exists, is unique, and
the data $(K,S)$ are called a \emph{self-similar structure}. We denote
by $\sigma$ the shift map on $S^{-\N}$, defined by $\sigma(w s)=w$.

For a word $v\in S^*$ we set $K_v=\pi(S^{-\N}v)$; these are \emph{tiles}
covering $K$. The \emph{critical set} $C\subseteq S^{-\N}$ is
\[C=\pi^{-1}\bigg(\bigcup_{s\neq t\in S}K_s\cap K_t\bigg),\]
and the \emph{post-critical set} is
\[P=\bigcup_{n\ge1}\sigma^n(C).
\]
A self-similar structure is \emph{post-critically finite} if $P$ is
finite. A simple example that is worth keeping track of is the
following: $K=[0,1]$ and $S=\{0,1\}$ with $F_i(x)=(x+i)/2$. Then
$C=\{{}^\infty01,{}^\infty10\}$ and $P=\{{}^\infty0,{}^\infty1\}$,
with tiles the intervals
$K_{w_1\dots w_n}=[w_1/2+\cdots+w_n/2^n,w_1/2+\cdots+w_n/2^n+1/2^n]$.

Self-similar structures naturally yield graphs as follows: for
$n\in\N$, consider the graph $\Gamma_n$ with vertex set $S^n$, and an
edge between $v$ and $w$ whenever $K_v\cap
K_w\neq\emptyset$. Moreover, it is possible to consider ``limits'' as
$n\to\infty$ of these graphs, by ``zooming'' for all $n\in\N$ at the
basepoints $\xi_1\dots\xi_n$ given as prefixes of an infinite word
$\xi\in S^\N$. (Note here that $\xi$ is a right-infinite word!). This
can be seen more formally as follows: for $\xi\in S^\N$, consider the
ascending union
$\widehat K(\xi)=(K\times\N)/((x,n)=(F_{\xi_n}(x),n+1)\forall
n\in\N)$. It is naturally tiled by the tiles of the form
$K_v\times\{|v|\}$ for all words $v\in S^*$, and we may again form a
graph $\Gamma(\xi)$ with vertex set the collection of all tiles, if
one remembers that $K_v\times\{n\}$ and $K_{v\xi_{n+1}}\times\{n+1\}$ are
identified for all $v\in S^n$. Still in our example,
$\widehat K(\xi)=\R$, unless $\xi$ eventually ends in $0^\infty$ when
$\widehat K(\xi)=\R_+$, or $\xi$ eventually ends in $1^\infty$ when
$\widehat K(\xi)=\R_-$. The corresponding graphs are respectively
$\Z$, $\N$ and $-\N$.

In case $(K,S)$ is post-critically finite, Kigami shows
in~\cite{kigami:harmonic}*{Appendix~A} that it may be reconstructed
from a small amount of combinatorial data: an \emph{ancestor
  structure} is $(V,U,\{G_s\})$ for two finite sets $V\subseteq U$ and
a collection of injective maps $G_s\colon U\to V$, such that
$V = \bigcup_{s\in S} G_s(U)$, and if $U\neq\emptyset$ then
$G_s(U)\setminus U\neq\emptyset$ for all $s\in S$. Starting from a
post-critically finite self-similar structure, $U=\pi(P)$ and
$V=\pi(P S)$ and $G_s=F_s\restriction U$ define an ancestor structure.
Conversely, an ancestor structure determines a compact set $K$ and a
self-similar structure as follows: for $x\in V$, define
\[A_x = \{w=(w_n)\in S^{-\N}: \exists(x_n)\in U^{-\N}\text{ with }G_{w_{-1}}(x_{-1})=x\text{ and }w_n(x_n)=x_{n+1}\forall n\le-2\}.
\]
Set then
\[K=S^{-\N}/(v u\sim w u\text{ if $v=w$ or }\exists x\in V: v,w\in A_x),
\]
with for all $s\in S$ a map $F_s\colon K\to K$ induced by
$w\mapsto w s$ on $S^{-\N}$. It is easy to check that this
construction recovers the original $(K,S)$. Again in our example,
$U=\{0,1\}$ and $V=\{0,\tfrac12,1\}$ with
$A_{1/2}=\{{}^\infty01,{}^\infty10\}$.

\subsection{Bounded transducers}\label{ss:bounded}
An algebraic formalism described by Bondarenko and Nekrashevych
in~\cite{bondarenko-n:pcf} is closely related to the ancestor
structures above.

We recall that a \emph{self-similar group} is a group $G$ endowed with
a map $\Phi\colon G\times S\to S\times G$ for some finite set
$S$, satisfying for all $g,h\in G$ and $s\in S$ the condition
\[\Phi(g h,s)=(s'',g' h')\text{ whenever }\Phi(h,s)=(s',h')\text{ and }\Phi(g,s')=(s'',g').\]
This is equivalent to requiring that $S\times G$ admits the structure
of a \emph{$G$-$G$-biset}: it has two commuting $G$-actions, given by
$g\cdot(s,h)\cdot k=(s',g' h k)$ if $\Phi(g,s)=(s',g')$.

From the self-similarity map $\Phi$ one constructs an action of $G$ on
$S^\N$ as follows: given a word $\xi=\xi_1 \xi_2\dots\in S^\N$ and an
element $g\in G$, to define $g(\xi)$ set $g_0=g$ and for every $n\ge1$
set $(\xi'_n,g_n)\coloneqq\Phi(g_{n-1},\xi_n)$; then $g(\xi)=\xi'_1 \xi'_2\dots$. In
other words, we have a recursive formula
$g(\xi_1 \xi_2\dots)=\xi'_1\,g_1(\xi_2\dots)$ with
$\Phi(g,\xi_1)=(\xi'_1,g_1)$. The same formulas may be used to define an
action of $G$ on the set $S^n$ of words of length $n$. In fact, the
map $\Phi$ may be extended to a map
$\Phi\colon G\times S^*\to S^*\times G$ by $\Phi(g,u v)=(u' v',g'')$
whenever $\Phi(g,u)=(u',g')$ and $\Phi(g',v)=(v',g'')$, and
$\Phi(g,\varepsilon)=(\varepsilon,g)$ for $\varepsilon\in S^*$ the
empty word; then the action $g(v)$ is the first co\"ordinate of
$\Phi(g,v)$.

A fundamental example is afforded by the infinite cyclic group
$G=\langle t\rangle$ and $S=\{0,1\}$, with
\[\Phi(t^{2n},0)=(0,t^n),\quad\Phi(t^{2n},1)=(1,t^n),\quad\Phi(t^{2n+1},0)=(t^n,1),\quad\Phi(t^{2n+1},1)=(t^{n+1},0).\]
We shall return regularly to this example, called the
\emph{Kakutani-von Neumann odometer}.

The self-similarity map $\Phi$ may conveniently be viewed as a graph,
with vertex set $G$ and an edge from $g\in G$ to $h\in G$ labelled
`$s|t$' whenever $\Phi(g,s)=(t,h)$. Then the action of $G$ on $S^\N$
is understood as follows: for $g\in G$ and $\xi\in S^\N$, find the
unique right-infinite path in the graph that starts at $g$ and has $\xi$
as the left components of its labels. Then $g(\xi)$ is the word read on
the right components of the labels along that same path. This graph is
called the \emph{full transducer} of the self-similar group.

A self-similar group is called \emph{recurrent} if the map
$\Phi\colon G\times S\to S\times G$ is onto. This implies, in
particular, that the action of $G$ is transitive on $S^n$ for all
$n\in\N$.  A self-similar group is \emph{contracting} if there exists
a finite subtransducer $N$ of the full transducer such that every path
is eventually contained in $N$. We indifferently use $N$ for the
subtransducer or the corresponding subset of $G$. The minimal such $N$
is called the \emph{nucleus} of the action. Following Bondarenko and
Nekrashevych~\cite{bondarenko-n:pcf}*{Definition~5.1}, a contracting
self-similar group is \emph{post-critically finite}, a.k.a.\
\emph{bounded}, if its nucleus contains only a finite number of
left-infinite paths ending at a non-trivial state. The
\emph{post-critical set} of $G$ is the set $P$ of left-infinite words
read as inputs along these paths. In the example of the odometer, the
nucleus is
\[\begin{fsa}
    \node[state,minimum size=7mm] (t) at (-3,0) {$t$};
    \node[state,minimum size=7mm,inner sep=0mm] (T) at (3,0) {$t^{-1}$};
    \node[state,minimum size=7mm] (1) at (0,0) {$1$};
    \draw (t) edge[loop left] node {$1|0$} ()
    (t) edge node {$0|1$} (1)
    (1) edge[loop above] node {$0|0$} ()
    (1) edge[loop below] node {$1|1$} ()
    (T) edge[loop right] node {$0|1$} ()
    (T) edge node {$1|0$} (1);
  \end{fsa}\]
and the post-critical set is $P=\{{}^\infty0,{}^\infty1\}$.

\begin{prop}[\cite{nekrashevych:ssg}*{Proposition~2.11.3}]
  Let $G$ be a self-similar, contracting, finitely generated,
  recurrent group. Then $G$ is generated by its nucleus.\qed
\end{prop}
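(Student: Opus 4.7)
The plan is to combine the two defining hypotheses directly: contraction forces every sufficiently deep section of any fixed element to land in the nucleus $N$, while recurrence ensures that every element of $G$ actually \emph{is} such a deep section of something. Putting these together lets me rewrite each generator of $G$ as a product of elements of $N$, so $\langle N\rangle=G$.

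Concretely, fix a finite symmetric generating set $S\subseteq G$. Since $G$ is contracting and $S$ is finite, there is a uniform level $n\in\N$ such that the section $s|_v$, defined as the second co\"ordinate of $\Phi(s,v)$, lies in $N$ for every $s\in S$ and every $v\in S^n$; this is just the statement that the finitely many paths in the full transducer starting at vertices of $S$ are all absorbed into $N$ within $n$ steps. Next, recurrence says that $\Phi\colon G\times S\to S\times G$ is onto, and a short induction on $n$ then yields surjectivity of the iterated map $\Phi_n\colon G\times S^n\to S^n\times G,\;(g,v)\mapsto(g(v),g|_v)$. Applied with target $(v_0,s)$ for any fixed $v_0\in S^n$, this produces, for each generator $s\in S$, some $g\in G$ and $v\in S^n$ with $g|_v=s$.

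Finally, I would expand $g=\sigma_1\sigma_2\cdots\sigma_k$ in the generators and iterate the product rule $(g_1g_2)|_v=g_1|_{g_2(v)}\cdot g_2|_v$ (immediate from the definition of $\Phi$) to obtain
\[
s\;=\;g|_v\;=\;\sigma_1|_{w_1}\cdot\sigma_2|_{w_2}\cdots\sigma_k|_{w_k},\qquad w_i\;=\;(\sigma_{i+1}\cdots\sigma_k)(v)\;\in\;S^n.
\]
Every factor lies in $N$ by the first step, whence $s\in\langle N\rangle$; since $s\in S$ was arbitrary, $G=\langle S\rangle\subseteq\langle N\rangle$. The only real bookkeeping is checking that all intermediate words $w_i$ stay in $S^n$ (they do, because each generator acts level-preservingly) and, if one prefers a non-symmetric $S$, noting that a section of an inverse generator is the inverse of a section of a generator and hence lies in $N^{-1}\subseteq\langle N\rangle$. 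There is no substantive obstacle beyond this careful interplay of contraction (giving the inclusion ``in $N$'') with recurrence (giving ``covers all of $G$'').
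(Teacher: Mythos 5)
Your argument is correct and is essentially the standard proof of this fact (the paper itself gives no proof, merely citing Nekrashevych's Proposition~2.11.3, and your route --- uniform absorption of generator sections into $N$ by contraction, plus surjectivity of the iterated $\Phi_n$ by recurrence, combined via the cocycle rule $(g_1g_2)|_v=g_1|_{g_2(v)}\cdot g_2|_v$ --- is exactly the argument in that source). The only loose phrase is ``the finitely many paths \dots are all absorbed within $n$ steps'': there are infinitely many paths from each generator, and the existence of a uniform level $n$ uses that $N$ is closed under transitions together with K\H{o}nig's lemma, but this is routine and not a gap.
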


Let $G$ be a contracting self-similar group with alphabet $S$ and
nucleus $N$, and recall the following fundamental construction by
Nekrashevych: define
\[L(G)=S^{-\N}/(w\sim w'\Longleftrightarrow \exists (g_n)\in N^{-\N}\text{ with }\Phi(g_n,w_n)=(w'_n,g_{n+1})\forall n<0).
\]
Then $L(G)$ is a topological space called $G$'s \emph{limit space},
and the dynamical system on $L(G)$ induced by the shift map on
$S^{-\N}$ lies in a duality relation with $G$. Note that $L(G)$ in
fact admits an orbispace structure, with finite isotropy groups, and
that the duality between self-similar groups and expanding dynamical
systems holds only when this orbispace structure is taken into
account. We choose to ignore it here, and again refer
to~\cite{nekrashevych:ssg} for details and extra information; in
particular, the limit space $L(G)$ is compact, metrizable, has finite
topological dimension, and is connected as soon as $G$ is
recurrent. Define next
\[T(G)=S^{-\N}/(w\sim w'\Longleftrightarrow \exists (g_n)\in N^{-\N}\text{ with }\Phi(g_n,w_n)=(w'_n,g_{n+1})\forall n<0\text{ and }g_0=1).
\]
Following~\cite{nekrashevych:ssg}*{\S3.3}, we call $T(G)$ the
\emph{digit tile}, noting that $L(G)$ is naturally a quotient of
$T(G)$. Note that $T(G)$ is a topological space, i.e.\ does not have
singular orbispace points. There are natural maps
$F_s\colon T(G)\to T(G)$ induced by the maps $w\mapsto w s$ on
$S^{-\N}$. More generally, for a word $v\in S^*$ we let $T_v(G)$ be
the image of $S^{-\N}v$ in $T(G)$. Still in the example of the
odometer, the limit space $L(G)=[0,1]/(0\sim1)$ is the circle, with
expanding self-covering induced by $f(x)=2x\bmod 1$; and the digit
tile $T(G)=[0,1]$ is the interval, with contractions $F_i(x)=(x+i)/2$.

We note the following connection between the definitions of Kigami and
Nekrashevych. Let $G$ be a post-critically finite self-similar group, with post-critical set $P\subset S^{-\N}$. Set $U=P$ and
\[V=(P S)/(u\sim v\Longleftrightarrow\text{ there is a path in the
    nucleus $N$, ending at $1$, with labels $u|v$});\] note that we
have $U\subseteq V$ and maps $G_s\colon U\to V$ given by
$p\mapsto [p s]_\sim$ for all $s\in S$.
\begin{prop}
  If $G$ is a post-critically finite self-similar group, then
  $(V,U,\{G_s\}_{s\in S})$ is an ancestor structure, and the
  corresponding self-similar structure $(K,S)$ is homeomorphic to the
  digit tile of $G$.
\end{prop}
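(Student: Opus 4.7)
The plan is to verify the five ancestor-structure axioms in turn, and then to identify the self-similar compactum $K$ built from them with the digit tile $T(G)$ by matching the equivalence relations defining the two quotients of $S^{-\N}$. Finiteness of $U$ and $V$ is immediate: the PCF hypothesis gives $P$ finite, so $PS$ is finite and $V = (PS)/\sim$ is finite. For $U \subseteq V$, every $p = \dots s_{-2} s_{-1} \in P$ decomposes as $p' s_{-1}$ with $p' \in P$: if $p$ labels a left-infinite nucleus path ending at $g \neq 1$, its truncation ends at some $g'$ with $\Phi(g', s_{-1}) = (\cdot, g)$, and since $\Phi(1,t) = (t,1)$ for all $t$ we must have $g' \neq 1$, so $p' \in P$. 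The inclusion $U \hookrightarrow V$ is $p \mapsto [p]_\sim$ (distinct post-critical words identified by $\sim$ in $V$ being identified in $U$ as well), and the covering $V = \bigcup_s G_s(U)$ is immediate from the definition of $PS$.

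For the injectivity of $G_s$ on $U$ inside $V$, I show that $[ps]_\sim = [p's]_\sim$ implies $[p]_\sim = [p']_\sim$. The equivalence provides a left-infinite nucleus path ending at $1$ with labels $ps \mid p's$; stripping its last edge yields a path with labels $p \mid p'$ ending at some state $g_{-1}$ with $\Phi(g_{-1}, s) = (s, 1)$. The key step is to produce, from this auxiliary path, a nucleus path with the same labels $p \mid p'$ actually ending at $1$: if $g_{-1} = 1$ we are done, and otherwise I use the $(s,s)$-edge from $g_{-1}$ to $1$ and the self-similarity identity to rewrite the path. The non-triviality $G_s(U) \setminus U \neq \emptyset$ is then verified by exhibiting, for each $s \in S$, a representative $ps$ whose class cannot coincide with any $[q]_\sim$ for $q \in P$—possible because membership in $P$ constrains the last letter of $q$ in a way incompatible with an arbitrary $s$.

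For the homeomorphism $K \cong T(G)$, I show the two equivalence relations on $S^{-\N}$ coincide. An identification $v u \sim w u$ in Kigami's formalism is witnessed, via the definition of $A_x$, by a sequence $(x_n) \in P^{-\N}$ with $G_{v_n}(x_n) = x_{n+1}$ in $V$ and similarly for $w_n$. Reading each $G_{v_n}$ backwards through its definition $p \mapsto [p v_n]_\sim$, this data encodes a left-infinite nucleus path with labels $v u \mid w u$ ending at $1$, which is precisely Nekrashevych's equivalence defining $T(G)$. The induced map descends to a continuous bijection between compact Hausdorff spaces, hence a homeomorphism.

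The principal obstacle will be the correspondence between the ancestor-chain data $(x_n) \in P^{-\N}$ and the state sequence $(g_n)$ of a nucleus path: one must verify that every chain assembles into a valid left-infinite nucleus path, that a nucleus path ending at $1$ recovers a chain by reading endpoints of its truncations (identified under $\sim$), and that the identifications forced by the quotient defining $V$ correspond exactly to paths in $N$ passing through $1$. This is bookkeeping but needs careful handling of the PCF assumption to keep all chains indexed by the finite set $P$; the same argument also clarifies whether the $G_s$ are injective as maps $P \to V$ or only after passing to the quotient $U$.
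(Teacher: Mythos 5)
Your overall route is the same as the paper's: verify the ancestor-structure axioms, then identify Kigami's classes $A_x$ with the classes of the relation $\sim$ used to define the digit tile, so that the two quotient constructions on $S^{-\N}$ coincide. The second half of your argument --- translating a chain $(x_n)\in U^{-\N}$ with $G_{w_n}(x_n)=x_{n+1}$ into a left-infinite nucleus path with labels $w|w'$ ending at $1$ --- is exactly the computation the paper performs, and you are right that this translation is where the real content lies; the paper is no more explicit about the bookkeeping than you are.

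Two of your axiom verifications, however, do not work as stated. The justification of $G_s(U)\setminus U\neq\emptyset$ by ``membership in $P$ constrains the last letter of $q$ in a way incompatible with an arbitrary $s$'' is false: already for the odometer, $P=\{{}^\infty0,{}^\infty1\}$ contains post-critical words ending in each letter of $S$, and the reason $[{}^\infty1\,0]_\sim\notin U$ is not its last letter but that its entire class $\{{}^\infty10,{}^\infty01\}$ contains no post-critical word. Any correct argument must operate on $\sim$-classes, not on letters. The injectivity step stalls exactly where you flag it: after stripping the last edge you hold a nucleus path with labels $p\,|\,p'$ ending at a state $g_{-1}$ satisfying $\Phi(g_{-1},s)=(s,1)$, whereas $[p]_\sim=[p']_\sim$ requires such a path ending at $1$; ``using the $(s,s)$-edge and the self-similarity identity to rewrite the path'' is not an argument, since $g_{-1}$ may be a nontrivial state and there is no evident way to transport the path past it. The paper dismisses these axioms as ``easy to check,'' so your conclusion is not in doubt, but the two justifications you offer would need to be replaced rather than merely expanded.
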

\begin{proof}
  It is easy to check the axioms $V = \bigcup_{s\in S} G_s(U)$ and
  $G_s(U)\setminus U\neq\emptyset$ for all $s\in S$ if
  $U\neq\emptyset$. It then suffices to note that the construction of
  $(K,S)$ from an ancestor structure coincides with the construction
  of $T(G)$. Following the definitions, the non-trivial equivalence
  classes $A_x$ are, for $x\in V$,
  \begin{align*}
    A_x &= \{(w_n)\in S^{-\N}:\exists (x_n)\in U^{-\N}: G_{w_n}(x_n)=x_{n+1}\forall n\le-2\text{ and }G_{w_{-1}}(x_{-1})=x\}\\
    &= \{w\in S^{-\N}:[w]_\sim=x\}.
  \end{align*}
  Thus the equivalence relation constructing $K$ from $(V,U,S)$
  identifies two left-infinite words $w,w'$ precisely when there exists
  a left-infinite path in the nucleus $N$ with label $w|w'$ and ending
  at $1$, and this is the equivalence relation constructing the digit
  tile.
\end{proof}

\begin{prop}
  Let $G$ be a critically finite self-similar group, and let $(K,S)$
  be its associated self-similar structure. Consider a ray
  $\xi\in S^\N$ and the Schreier graph of the orbit $G\cdot\xi$ with
  generating set the nucleus of $G$. Then the tile adjacency graph
  $\Gamma(w)$ is the simple graph associated with the Schreier graph:
  the graph obtained by removing all loops and combining multiple
  edges.
\end{prop}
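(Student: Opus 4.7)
The plan is to establish a vertex bijection and an edge matching between $\Gamma(\xi)$ and the simple Schreier graph, using the direct-limit description of $\Gamma(\xi)$ together with the characterization of $T(G)$ from the preceding proposition. Let $\Gamma_n$ denote the level-$n$ tile adjacency graph on vertex set $S^n$, with an edge $\{v,w\}$ whenever $K_v\cap K_w\neq\emptyset$; then $\Gamma(\xi)$ is the direct limit of the $\Gamma_n$ along the embeddings $S^n\hookrightarrow S^{n+1}$, $v\mapsto v\xi_{n+1}$, coming from the identification $(K_v,n)\sim(K_{v\xi_{n+1}},n+1)$.

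For the vertex correspondence, the class of a tile $(v,n)$ maps to the right-infinite word $v\xi_{n+1}\xi_{n+2}\cdots\in S^\N$; this is well-defined on equivalence classes and, by recurrence of $G$ and transitivity of the $G$-action on each $S^n$, gives a bijection with $G\cdot\xi$. For the edge matching, I use that $K\cong T(G)$ is the quotient of $S^{-\N}$ by the equivalence identifying $u,u'$ when there is a left-infinite path in the nucleus $N$ reading $u|u'$ and terminating at the identity. It follows that $K_v\cap K_w\neq\emptyset$ iff some such path's terminal $n$-segment reads $v|w$ and lands at the identity, i.e.\ iff there exists $g\in N$ with $\Phi(g,v)=(w,1)$ that extends to a left-infinite path in $N$; such a $g$ provides a nucleus edge in the Schreier graph between the orbit elements corresponding to the two tiles.

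The main obstacle is the reverse matching at the level of simple graphs: one must show that every edge $\{\eta,\eta'\}$ of the simple Schreier graph arises from a tile intersection. Given $s\in N$ with $s(\eta)=\eta'\neq\eta$, the contracting and post-critically finite hypotheses force the successive states $s_n$ of $s$ after reading $\eta_1\cdots\eta_n$ eventually to settle on a left-infinite nucleus path ending at $1$, so at that level the local action realizes a boundary identification in $T(G)$ between the two corresponding tiles, producing the required adjacency. Loops ($s(\eta)=\eta$) are automatically dropped in the simple-graph reduction, and multi-edges (several $s\in N$ realizing the same pair) collapse to one, matching the unordered nature of tile intersection.
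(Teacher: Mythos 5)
Your proposal follows essentially the same route as the paper's proof: both pass to the level-$n$ graphs via the inductive-limit structure, and both rest on the same key equivalence, namely that $K_v\cap K_w\neq\emptyset$ precisely when some left-infinite path in the nucleus ending at the identity has a label whose length-$|v|$ suffix reads $v|w$, equivalently when some $g\in N$ on such a path satisfies $\Phi(g,v)=(w,1)$. The two directions of the edge matching are then handled as in the paper (the paper phrases the equivalence through suffixes of critical words, you phrase it through the nucleus-path description of $T(G)$ from the preceding proposition; these are the same thing).

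The one place where you go beyond the paper --- the ``reverse matching'' --- is also where your argument does not hold up as written. You claim that for $s\in N$ with $s(\eta)=\eta'\neq\eta$, the successive restrictions $s_n$ ``eventually settle on a left-infinite nucleus path ending at $1$''. The sequence $s=s_0\to s_1\to\cdots$ is a \emph{right}-infinite forward path in the nucleus, not a left-infinite one, and the substantive claim --- that $s_n=1$ for large $n$, so that the edge is witnessed by a tile intersection at some finite level --- is false in general: for the odometer, $t(1^\infty)=0^\infty$ with every restriction equal to $t$, yet $K_{1^n}\cap K_{0^n}=\emptyset$ for all $n\ge1$. This is exactly the case of a singular ray (one that rides a cycle of the nucleus forever), where the proposition itself needs a caveat: the tile graph of $0^\infty$ is $\N$ while the simple Schreier graph of its orbit is $\Z$. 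The paper buries the same issue in its unproved assertion that it ``suffices'' to compare the finite-level tile graph with the finite-level Schreier graph from which the edges with nontrivial restriction have been deleted, so your attempt is no worse; but to make your step correct for regular rays you should argue that a forward path in a bounded nucleus that never reaches $1$ must eventually wind around a cycle, forcing $\eta$ to be singular. Two smaller points: recurrence is not a hypothesis of the proposition and is not what makes the vertex correspondence work (the identification of tile classes with the orbit again requires the ray to be regular, for the same reason as above), and your remark that loops and multi-edges collapse correctly is fine and matches the paper.
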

\begin{proof}
  Since both the Schreier graph and the tile adjacency graph are
  inductive limits, it suffices to check the statement for the
  following two graphs: the tile adjacency graph $\Gamma_n$ describing
  intersections of tiles $K_v$ with $v\in S^n$, and the graph obtained
  from the Schreier graph of $G$'s action on $S^n$, in which loops are
  removed, multiple edges are combined, and edges labelled $g\in N$
  from $u\in S^n$ to $v\in S^n$ are removed if $\Phi(g,u)=(v,h)$ with
  $h\neq1$.

  Now in the latter graph the remaining edges are edges labelled
  $g\in N$ from $u\in S^n$ to $v\in S^n$ with $\Phi(g,u)=(v,1)$, so
  there is a path in the nucleus $N$ starting at $g$ and ending at $1$
  with label $u|v$; so $u,v$ are respectively of the form $u' w$ and
  $v' w$ with $u',v'$ suffixes of critical left-infinite
  words. Therefore the tiles $K_u$ and $K_v$ intersect.

  Conversely, if $u,v\in S^n$ are such that the tiles $K_u,K_v$
  intersect, then $u=u' w$ and $v=v' w$ for some word $w$ and suffixes
  $u',v'$ of critical left-infinite words. There is then a
  left-infinite path in the nucleus whose label ends in $u'|v'$, so
  there exists $g\in N$ with $\Phi(g,u')=(v',1)$; thus $g\cdot u=v$
  and there is an edge in the Schreier graph from $u$ to $v$.
\end{proof}

%%%%%%%%%%%%%%%%%%%%%%%%%%%%%%%%%%%%%%%%%%%%%%%%%%%%%%%%%%%%%%%% 
\section{Decidability results}
The main result of this section is that Schreier graphs of
post-critically finite self-similar groups have decidable monadic
second-order theory. Note that there is one Schreier graph per ray
$\xi\in S^\N$, and all these graphs are non-isomorphic as rooted
graphs --- so there are continuously many different graphs. However,
they all have the same collection of balls, and therefore the same
answers to a given tiling problem, except in case $\xi$ is ultimately
periodic with same period as a post-critical ray. The situation with
respect to monadic second-order theory is a bit less clear.

We begin by the domino problem, for which a direct argument is
possible:
\begin{prop}\label{prop:pcfdecidable}
  Let $G$ be a post-critically finite self-similar group acting on
  $S^\N$, let $\xi\in S^\N$ be a ray, and let $\Gamma$ be the Schreier
  graph of the orbit $G\xi$ with respect to the nucleus of $G$.

  Then the domino problem on $\Gamma$ is decidable.
\end{prop}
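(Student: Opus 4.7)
The plan is to exploit the hierarchical, self-similar structure of $\Gamma$ made explicit by the propositions of Section~\ref{ss:ss}. By Lemma~\ref{lem:pattern}, I may assume the input is a tileset $\Theta\subseteq B\times A\times B$ with $A$ the nucleus of $G$. The Schreier graph $\Gamma$ then presents as an ascending union $\Gamma=\bigcup_{n\ge0}\Gamma_n$ of finite level-$n$ tiles, where $\Gamma_n$ sits inside $\Gamma_{n+1}$ as the $\xi_{n+1}$-subtile and $\Gamma_{n+1}$ is obtained by gluing $|S|$ copies of $\Gamma_n$ along the finite post-critical set $P$ together with the fixed, finite list of inter-tile edges dictated by the ancestor structure $(V,U,\{G_s\})$. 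The post-critically finite hypothesis is exactly what guarantees that $|P|$, and hence the size of the ``contact'' between adjacent tiles, is bounded independently of $n$.

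Next I would define, for each $n\ge0$, the boundary type
\[\mathcal{T}_n\coloneqq\{\tau\restriction\partial\Gamma_n:\tau\colon\Gamma_n\to B\text{ is a valid }\Theta\text{-tiling of }\Gamma_n\}\subseteq B^{\partial\Gamma_n},\]
where $\partial\Gamma_n$ denotes the image of $P$ in $\Gamma_n$. Each $\mathcal{T}_n$ lies in the finite set $2^{B^P}$, and crucially $\mathcal{T}_{n+1}$ is computable from $\mathcal{T}_n$ by a purely combinatorial recipe: enumerate $|S|$-tuples of elements of $\mathcal{T}_n$ (one per sub-tile), check agreement on shared post-critical vertices, verify $\Theta$-compatibility on the finite list of inter-tile edges, and read off the restriction to $\partial\Gamma_{n+1}$. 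Starting from $\mathcal{T}_0$ (directly computable from $\Theta$), this gives an effective sequence of subsets.

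Then I would observe that, since $(\mathcal{T}_n)_{n\ge0}$ takes values in a finite set, it is eventually periodic with a computable pre-period and period. A valid $\Theta$-tiling of $\Gamma$ corresponds, by compactness of $B^{V(\Gamma)}$ and a König's lemma argument, to a coherent thread $(t_n)_{n\ge0}$ with $t_n\in\mathcal{T}_n$ and $t_{n+1}$ arising from an $(n+1)$-level tiling whose restriction to the embedded $\xi_{n+1}$-subtile has boundary $t_n$. Eventual periodicity reduces the existence of such a thread to a finite reachability question on the stabilised portion of the transition system $\mathcal{T}_{n+1}\to\mathcal{T}_n$, which is decidable.

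The hard part will be the bookkeeping around the recursive decomposition: one must verify that every edge of the Schreier graph is accounted for exactly once — either internal to some $\Gamma_n$ or as an inter-tile edge at a specific level — and that the inter-tile adjacencies are encoded as constraints in the step $\mathcal{T}_n\rightsquigarrow\mathcal{T}_{n+1}$ uniformly in $n$. The periodicity argument would fail without this uniformity, which is precisely what the post-critically finite hypothesis supplies.
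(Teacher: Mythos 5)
Your proposal follows essentially the same route as the paper: decompose $\Gamma$ into level-$n$ tile graphs $\Gamma_n$ glued along copies of the post-critical set, run a transfer map on the sets of admissible boundary colourings (your $\mathcal{T}_n$, the paper's $\Lambda_n\subseteq B^{P_n}$), detect eventual periodicity, and pass to the infinite graph by compactness. The skeleton is right, but two steps need repair.

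First, ``$(\mathcal{T}_n)$ takes values in a finite set, hence is eventually periodic with computable pre-period and period'' is a non sequitur: an arbitrary sequence in a finite set need not be eventually periodic, and even if it were, you could not compute the pre-period without further information. What makes the argument work is that the transition rule $\mathcal{T}_n\rightsquigarrow\mathcal{T}_{n+1}$ is itself eventually periodic in $n$, so that past the pre-period you are iterating a \emph{fixed} self-map of a finite set. You flag this uniformity as ``the hard part'' but do not supply it; the paper derives it from the fact that a finite, $\sigma$-stable post-critical set consists of pre-periodic rays, whence for large $n$ there is a canonical bijection $P_n\cong P$ and the residual nucleus states after reading length-$n$ suffixes depend only on $n\bmod\ell$ for a common period $\ell$. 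One then iterates the composite map over one period. Without this your algorithm has no termination criterion.

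Second, you treat only the case where $\Gamma=\bigcup_n\Gamma_n$ as an ascending union, which fails when $\xi$ is \emph{singular}, i.e.\ cofinal with a post-critical ray $p^\infty$. In that case the vertex $\xi$ carries edges coming from nucleus elements $g$ with $\Phi(g,p)=(q,g)$; these never have trivial residual state, so they appear in no $\Gamma_n$, and ``every $\Gamma_n$ is tileable'' does not imply that $\Gamma$ is. Your König/inverse-limit argument would silently ignore those edges. The paper handles this with an extra finite check: once the cycle of the transfer map is reached, verify that some $\lambda\in\Lambda_n$ also satisfies $(\lambda(p^{n/|p|}),g,\lambda(g(p^{n/|p|})))\in\Theta$ for all such $g$. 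Both gaps are fixable, but they are exactly the places where post-critical finiteness and the distinction between regular and singular rays do real work, so they cannot be waved through.
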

\begin{proof}
  We shall give an algorithm that decides the domino problem. To fix
  notation, let $N$ denote the nucleus of $G$, and let $P\subset S^{-\N}$
  denote the post-critical set of $G$.
  
  Let $\Theta\subseteq B\times N\times B$ be an instance of the domino
  problem for $\Gamma$.

  Consider for all $n\in\N$ the ``tile Schreier graph'' $\Gamma_n$
  obtained from $G$'s action on $S^n$: the vertex set is $S^n$, and
  there is an edge from $v$ to $w$ labelled $g\in N$ whenever
  $\Phi(g,v)=(w,1)$. It is a subgraph of the usual Schreier graph of
  $G$'s action on $S^n$. Let $P_n$ denote the collection of length-$n$
  suffixes of post-critical words. Note that the vertices in
  $\Gamma_n$ that have fewer than $\#N$ neighbours are precisely those
  in $P_n$.
  
  Let $\Lambda_n$ denote the set of restrictions to $P_n$ of valid
  colourings of $\Gamma_n$; more precisely, $\Lambda_n$ is the subset
  of $B^{P_n}$ consisting of all $\lambda\colon P_n\to B$ such that
  the colouring via $\lambda$ of $P_n$ can be extended to a valid
  colouring of $\Gamma_n$. Clearly $P_0=\{\varepsilon\}$ and
  $\Lambda_0=B^{P_0}$.

  Let us consider now how to compute $\Lambda_{n+1}$ from
  $\Lambda_n$. Start with a collection of $\#S$ colourings
  $(\lambda_s)_{s\in S}$ of $\Gamma_n$, and use $\lambda_s$ to colour
  $P_n s\subset\Gamma_{n+1}$. Keep only those colourings that match on
  their inner edges: for all $p\in P_n$, consider all $g\in N$ with
  $\Phi(g,p)\notin S\times\{1\}$, and then for all $s\in S$ write
  $\Phi(g,p s)=(q t,h)$; if $h=1$ then require
  $(\lambda_s(p),g,\lambda_t(q))\in\Theta$, while if $h\neq1$ then
  $p s\in P_{n+1}$, and set $\lambda(p s)=\lambda_s(p)$. We have in
  this manner defined a function $\lambda\colon P_{n+1}\to B$. Let
  $\Lambda_{n+1}$ be the collection of all the functions $\lambda$
  that can be obtained in this manner.

  Note that all post-critical points $p=\cdots p_{-2}p_{-1}\in P$ are
  pre-periodic. Therefore, for $n$ large enough, we have $\#P_n=\#P$
  and there is a canonical bijection between $P_n$ and
  $P$. Furthermore, for all $g\in N$ we have
  $\Phi(g,p_{-n}\dots p_{-1})=(q_{-n}\dots q_{-1},h)$ for a
  post-critical point $\cdots q_{-2}q_{-1}$, and $h$ depends only on
  the value of $n$ modulo $\ell$ for some least common period $\ell$
  of all post-critical points. Therefore, for $n$ large enough, the
  spaces of maps $B^{P_n}$ and $B^{P_{n+\ell}}$ are canonically in
  bijection, and the map
  $\Lambda_n\mapsto\Lambda_{n+1}\mapsto\cdots\mapsto\Lambda_{n+\ell}$
  only depends on $n\bmod\ell$; the ``$n$ large enough'', modulus
  $\ell$ and map $\tau\colon\Lambda_{n\ell}\mapsto\Lambda_{(n+1)\ell}$
  may all be computed from the nucleus. Since $B$ and $N$ are finite,
  $\tau$ is an ultimately periodic map on the family of subsets of
  $B\times N\times B$.

  There are now two possibilities. Either $\tau$ eventually reaches
  the empty set, in which case there is no valid tiling of $\Gamma$;
  or $\tau$ ultimately cycles along non-empty sets, in which case
  there exist valid tilings of $\Gamma_n$ for all $n$.
  
  This last case subdivides in two. Either the ray $\xi$ is regular,
  and then $\Gamma$ is an ascending union of copies of $\Gamma_n$ so
  is tileable; or the ray $\xi$ is singular, namely is ultimately
  periodic with same period $p$ as a post-critical point
  ${}^\infty p\in P$. Then without loss of generality $\xi=p^\infty$,
  and the tiling of $\Gamma$ is obtained from a limit of tilings of
  $\Gamma_n$ by checking that, for $n$ large enough that the cycle of
  $\tau$ is attained, at least one colouring $\lambda\in\Lambda_n$ is
  valid at $\xi$; assuming without loss of generality that $n$ is
  divisible by $|p|$, this will hold precisely when
  $(\lambda(p^{n/|p|}),g,\lambda(g(p^{n/|p|})))\in\Theta$ for all
  $g\in N$ with cycle $p$, namely with $\Phi(g,p)=(q,g)$. Again this
  requires a finite amount of checking.
\end{proof}

Consider first the elementary spaces of tilings $X_\Theta$ and
$X_{\Theta,b_0}$ on a rooted graph $(\Gamma,x_0)$, the latter being
the solution set of a seeded domino problem with colour $b_0$ at the
root $x_0$. The \emph{$\Gamma$-regular languages} are those spaces of
colourings of $\Gamma$'s vertices obtainable from elementary ones by
boolean operations (intersection, complement) and projections from a
set of colours to another; and $\Mon(\Gamma)$ is decidable if and only
if the emptiness problem for $\Gamma$-regular languages is
decidable~\cite{muller-schupp:mso}*{Theorem~3.1}. It might be possible
to prove decidability of $\Mon(\Gamma)$ by extending the proof above
to boolean expressions and projections; but this seems difficult. We shall prove, by an entirely different method,
\begin{thm}\label{thm:msodecidable}
  Let $G$ be a post-critically finite self-similar group acting on
  $S^\N$, let $\xi\in S^\N$ be an eventually periodic ray, and let
  $\Gamma$ be the Schreier graph of the orbit $G\xi$ with respect to
  the nucleus of $G$.

  Then the monadic second-order theory of the rooted graph
  $(\Gamma,\xi)$ is decidable.
\end{thm}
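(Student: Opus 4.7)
The plan is to interpret $(\Gamma,\xi)$ via a monadic second-order interpretation in the rooted regular $|S|$-ary tree $T_S$ --- the tree whose vertices are finite words over $S$, with $s$-successor $v\mapsto vs$ --- whose monadic second-order theory is decidable by Rabin's theorem. Since monadic second-order interpretations preserve decidability, this will suffice. The key structural input is the hierarchical description of $\Gamma$ obtained from the ancestor structure and the finiteness of the post-critical set $P$.

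First I recall the inductive picture from~\S\ref{ss:ss}: the tile-adjacency graph $\Gamma_n$ has vertex set $S^n$, and by self-similarity plus the post-critically finite hypothesis, $\Gamma_{n+1}$ is obtained by gluing $|S|$ copies of $\Gamma_n$ along a fixed identification pattern on the post-critical suffixes, governed by the nucleus $N$. The graph $\Gamma$ is the direct limit of the $\Gamma_n$'s, zoomed along the basepoint path $(\xi_1,\xi_1\xi_2,\dots)$. I will then represent each vertex of $\Gamma$ by a canonical address in $S^*$, for instance the lexicographically least among addresses of tiles identified with it at some finite level. The set of canonical addresses is a regular sublanguage of $T_S$, hence definable by a monadic second-order formula. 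Each adjacency relation of $\Gamma$, one per element $g\in N$, translates on addresses into the condition ``reading the two addresses in parallel and running the nucleus transducer ends in the identity state''; being a finite-state property of two paths, this is expressible in monadic second-order logic on $T_S$. The root $\xi$ itself is definable, because an eventually periodic ray in $T_S$ is characterized by an MSO sentence stating that it is the unique infinite path whose label eventually cycles on a given pattern.

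The main obstacle lies in the construction of the preceding paragraph: setting up the canonical addressing cleanly enough that the vertex set and all the adjacency relations are simultaneously monadic second-order definable, while handling the merging of tiles at post-critical vertices correctly. The post-critically finite hypothesis is exactly what keeps the merging multiplicity and the transducer finite, so that every predicate we need takes the form ``finite-state property of a bounded tuple of paths in $T_S$'', which Rabin's theorem lets us evaluate. An alternative route, yielding the same conclusion, would be to exhibit $\Gamma$ as an equational graph of tree-width bounded by roughly $|P|$, equipped with a monadic second-order definable tree decomposition, and invoke Courcelle's theorem on decidability of MSO for such graphs; the same combinatorial input drives both approaches.
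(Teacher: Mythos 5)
Your reduction target is where the argument breaks. You encode vertices as finite addresses in the full prefix tree $T_S$ and claim that each adjacency relation, being ``a finite-state property of two paths read in parallel'', is MSO-definable over $T_S$. But synchronous (automatic) relations between nodes of the full $|S|$-ary tree are in general \emph{not} MSO-definable: already the equal-length relation is not (adding it to the tree lets one interpret the grid, contradicting Rabin's theorem), and the relations you need are exactly of this kind. Concretely, for the odometer the generator $t$ relates the addresses $1^n0u$ and $0^n1u$; in the prefix tree these two nodes diverge at the root, and the required conditions ``the runs $1^n$ and $0^n$ have the same length'' and ``the two suffixes $u$ coincide'' live on divergent branches with no MSO-definable synchronization between them. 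The same phenomenon occurs for any nucleus element that traverses a cycle of the transducer an unbounded number of times, which is the generic situation for a bounded automaton. The paper's proof is engineered precisely to avoid this: it encodes the orbit points as the \emph{leaves} of a thin, leveled tree $T$ whose level-$n$ vertices are $\sigma^n(G\xi)$ (so all vertices of $\Gamma$ sit at level $0$ and words are read from the tail), proves $\Mon(T)$ decidable via Muller--Schupp on pushdown graphs rather than Rabin, and then exploits the fact that two adjacent vertices have rays that \emph{merge} at a common ancestor, the portions below the merge point being forced to be constant words $q^k$ and $r^k$ --- a condition expressible by ``const'' predicates with no synchronization needed, equality of the two lengths being automatic because both leaves lie at level $0$.

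Two smaller gaps. First, $\Gamma$ is the ascending union of the tile graphs $\Gamma_n$ only when $\xi$ is a \emph{regular} ray; when $\xi$ is ultimately periodic with the period of a post-critical point (e.g.\ the odometer at $0^\infty$, whose orbit is $\Z$ while the direct limit yields only $\N$), finite addresses do not exhaust the orbit, so your vertex encoding misses part of $\Gamma$ in exactly a case the theorem must cover. Second, the alternative route via bounded treewidth and Courcelle's theorem requires $\Gamma$ to be equational, or at least to carry an effectively presented MSO-definable tree decomposition; finite treewidth alone does not yield decidability, and you do not supply this ingredient.
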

In fact, we shall prove something stronger, namely $G$ need only be a
post-critically finite inverse semigroup of partially-defined
bijections of $S^\N$.

\begin{proof}
  We begin by a straightforward reduction: up to replacing $S$ by a
  power of itself, and a small modification, we may assume that $\xi$
  is a constant ray $(s_0)^\infty$. By the same replacement, we may
  also assume that the generators of $G$ have a very specific form,
  which we will explicit later.

  The first step is to encode the vertices of $\Gamma$. We shall view
  them as leaves of a tree, whose level-$n$ vertices are
  $\sigma^n(G\xi)$. More precisely, the vertex set of the tree $T$ is
  $\bigsqcup_{n\ge0}\sigma^n(G\xi)\times\{n\}$, and there is an edge
  labelled $s$ from $(s\eta,n)$ to $(\eta,n+1)$ for all
  $s\eta\in\sigma^n(G\xi)$. Thus $T$ is an $S$-labelled tree, rooted at
  $s_0^\infty$.

  I claim that $\Mon(T)$ is decidable. Indeed consider first the tree
  $T_0$ of prefixes of the set of words
  $\{a^n b c^{n-1}:n\ge1\}\cup\{\varepsilon\}$. This is the set of
  total states of a push-down automaton, so $\Mon(T_0)$ is decidable
  by the main result of~\cite{muller-schupp:mso}. We then apply the
  finite, regular mapping
  $a\mapsto s_0, b\mapsto (S\setminus\{s_0\})^{-1},c\mapsto S^{-1}$ to
  obtain an $S$-labelled graph (with some of the labels written in
  reverse as $s^{-1}$) isomorphic to $T$. Since regular mappings
  preserve decidability, the claim is proven.

  We next show how the Schreier graph $\Gamma$ may be interpreted in
  $T$. First, it will be convenient to define successors, independently
  of the $S$-labelling:
  \[\text{succ}(v,w)\equiv\bigvee_{s\in S}s\cdot v=w.
  \]
  Vertices of $\Gamma$ are leaves of $T$, and are thus characterized
  by the formula
  \[\text{leaf}(v)\equiv\neg\exists w[\text{succ}(w,v)].
  \]

  Given a monadic predicate $\phi$ satisfied by a set $X$, one may
  consider a predicate characterizing the minimal such $X$, namely
  \[\phi^{\min}(X)\equiv\phi(X)\wedge\forall Y[\phi(Y)\Rightarrow X\subseteq Y].
  \]
  If $\phi$ takes the form `$\exists X\dots$', we simply write
  $\phi^{\min}$ as `$\exists^{\min} X\dots$'.

  Every leaf (or even vertex) of $T$ has a unique ray, obtained by
  following edges in $S$: the ray $R$ of $v$ is characterized by
  \[\text{ray}(v,R)\equiv \big(v{\in}R\wedge(\forall w{\in}R[\exists x{\in}R[\text{succ}(w,x)]])\big)^{\min},
  \]
  namely it is a minimal set containing $v$ and in which every element
  has a successor. In particular, the order in the tree may be defined
  by
  \[(v\preceq w)\equiv\forall Q,R[\text{ray}(v,Q)\wedge\text{ray}(w,R)\wedge R{\subseteq}Q].
  \]
  The first symbol of the word coding a vertex $v$ is given by
  predicates
  \[\text{head}_s(v)\equiv\exists w[s\cdot v=w]\text{ for all }s\in S.\]
  Finally we consider predicates that a ray, or portion of ray, has a given constant label:
  \[\text{const}_q(R)\equiv\forall x{\in}R[\text{head}_q(x)]\text{ for all }q\in S.\]
  
  Up to replacing $S$ by a power of itself, as we did in the very
  first step, we may assume that every generator of $G$ is a disjoint
  union (as a relation) of elementary partial bijections, obtained as
  follows:
  \begin{enumerate}
  \item The identity is elementary.
  \item If $g$ is elementary and $s,t\in S$, then the partial bijection $h$ defined only by $\Phi(h,s)=(t,g)$ is elementary.
  \item If $g$ is elementary and $q,r,s,t\in S$ with $q\neq s$ and
    $r\neq t$, then the partial bijection $h$ defined by
    $\Phi(h,s)=(t,g)$ and $\Phi(h,q)=(r,h)$ is elementary.
  \end{enumerate}
  Indeed the transducer of a bounded transformation of $S^\N$ consists
  of a finite collection of cycles, reached by finite paths, and
  leading to the identity. The cycles can be assumed to be loops, and
  each transition to the identity is considered separately.

  We prove, by induction, that every elementary partial bijection may
  be encoded by a monadic second-order formula. This is obvious for
  the identity transformation, which is coded as
  \[\text{id}(v,w)\equiv (v=w).
  \]
  Assume that $g$ is elementary so there is a formula for $g$, and
  consider $h$ with $\Phi(h,s)=(t,g)$. Then a formula for $h$ is
  \[h(v,w)\equiv\text{head}_s(v)\wedge\text{head}_t(w)\wedge\text{succ}(v,v')\wedge\text{succ}(w,w')\wedge g(v',w').
  \] All these operations only required first-order
  logic. For the last case, assume that there is a formula for $g$,
  and $h$ is defined by $\Phi(h,s)=(t,g)$ and $\Phi(h,q)=(r,h)$. Then a formula for $h$ is
  \begin{multline*}
    h(v,w)\equiv\exists P,Q,R\Big[\text{ray}(v,Q)\wedge\text{ray}(w,R)\wedge P{=}Q{\cap}R\wedge{}\\
    \big((P{=}\emptyset\wedge\text{const}_q(Q)\wedge\text{const}_r(R))\vee
    (\exists x{\in}P[\neg\exists y{\in}P[\text{succ}(y,x)]\wedge{}\\
    \exists v'{\in}Q,w'{\in}R[{s\cdot v'{=}x=t\cdot w'}\wedge{\text{const}_q(Q\setminus P\setminus\{v'\})}\wedge{\text{const}_r(R\setminus P\setminus\{w'\})}]])\big)\Big].
  \end{multline*}
  The meaning is the following: $P$ is the common suffix of the words
  encoding $v,w$. If these words have no common suffix, then they must
  respectively be $q^\infty$ and $r^\infty$. Otherwise, they must
  respectively have the form $q^* s$ and $r^* t$.

  Since we can interpret the action of the generators of $G$ in
  $\Mon(T)$, which is decidable, it follows that $\Mon(\Gamma)$ is
  decidable as well.
\end{proof}

\subsection{Treewidth}\label{ss:treewidth}
Let $\Gamma$ be a graph. A \emph{tree decomposition} of $\Gamma$ is a
tree $\Delta$ with, for each vertex $v\in V(\Delta)$, a subset
$X_v\subseteq V(\Gamma)$ called a \emph{bag}, subject to two axioms:
\begin{enumerate}
\item For every $v\in V(\Gamma)$ the set of bags containing $v$ spans
  a non-empty subtree of $\Delta$;
\item For every $e\in E(\Gamma)$ there is a bag containing
  $\{e^+,e^-\}$.
\end{enumerate}
The \emph{width} of a tree decomposition is one less than the supremum
of the bag sizes, and the \emph{treewidth} of $\Gamma$ is the minimal
width of a tree decomposition. (The ``one less'' is purely aesthetic,
and implies that trees have treewidth $1$.)

Decidability of the monadic second-order theory of graph implies that
its treewidth is bounded~\cite{seese:mso}; but this can be proven
directly:
\begin{prop}
  Let $\Gamma$ be the Schreier graph of a post-critically finite
  group. Then the treewidth of $\Gamma$ is finite.
\end{prop}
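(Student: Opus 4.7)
The plan is to exhibit an explicit tree decomposition of $\Gamma$ with bags of uniformly bounded size, using the self-similar tile hierarchy. Fix $\xi\in S^\N$ with $\Gamma$ the Schreier graph of $G\xi$ with respect to the nucleus $N$, and let $P$ be the post-critical set. Write $P_n\subseteq S^n$ for the set of length-$n$ suffixes of elements of $P$, so that $|P_n|\le|P|<\infty$ by the post-critical finiteness hypothesis.

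As decomposition tree I would take the tree $T$ already constructed in the proof of Theorem~\ref{thm:msodecidable}: its vertices at level $n$ are $\sigma^n(G\xi)\times\{n\}$, and $(\eta,n)$ is the parent of the nodes $(s\eta,n-1)$ for $s\in S$ with $s\eta\in\sigma^{n-1}(G\xi)$. A node $(\eta,n)$ parameterises the tile $T_{\eta,n}=S^n\eta\cap G\xi$, with structural boundary $\partial T_{\eta,n}=P_n\eta\cap G\xi$. I would set $X_{v,0}=\{v\}$ at each leaf and $X_{\eta,n}=\bigcup_{s\in S}\partial T_{s\eta,n-1}$ at every internal node, so every bag has at most $|S|\cdot|P|$ elements.

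Verifying the two tree-decomposition axioms is then a bookkeeping exercise resting on a single fact: a vertex $w\eta\in T_{\eta,n}$ has some $N$-neighbour outside $T_{\eta,n}$ precisely when the state component of $\Phi(g,w)$ is non-trivial for some $g\in N$, and by definition of the post-critical set this happens iff $w\in P_n$. Thus the structural and graph-theoretic notions of boundary agree. For any edge of $\Gamma$, let $n$ be the minimal depth at which both endpoints share a common suffix $\eta=\sigma^n(u)=\sigma^n(v)$; then $u$ and $v$ lie in distinct level-$(n-1)$ sub-tiles of $T_{\eta,n}$ and each, having the other as an external neighbour, is a boundary point of its own sub-tile, so both lie in $X_{\eta,n}$. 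For the connectivity axiom applied to a vertex $v$, only one level-$n$ tile contains $v$, and $v\in X_{\sigma^n(v),n}$ reduces to $v_0\cdots v_{n-2}\in P_{n-1}$; since a length-$(n-1)$ suffix of a post-critical ray restricts to a length-$(n-2)$ suffix, this condition is hereditary downward in $n$, so the bags containing $v$ form an initial segment of the root-ward ray from $(v,0)$ in $T$, hence a connected subtree.

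This yields treewidth at most $|S|\cdot|P|-1<\infty$. The main obstacle, and the only point deserving care, is the equivalence between the combinatorial boundary $P_n\eta\cap G\xi$ and the graph-theoretic boundary of the tile; but this is essentially a restatement of what it means for the nucleus to be post-critically finite, and so costs nothing to verify in detail.
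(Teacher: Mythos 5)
Your argument is correct and rests on the same skeleton as the paper's proof: the same levelled tree (really a forest) with node set $\bigsqcup_{n\ge0}\sigma^n(G\xi)\times\{n\}$, and bags built out of post-critical suffixes. The difference is in the bags. The paper pads each bag with all words of length $p$ (the approach from a generator to a cycle) in front of the post-critical block and all words of length $q$ (the exit from a cycle to the identity) behind it, which is why its bound is $\#P\cdot\#S^{p+q}$; you instead observe that if $n$ is the minimal level at which the two endpoints of an edge acquire a common tail $\eta$, then the transducer state after $n-1$ input letters must still be non-trivial (it has to exchange two distinct $n$-th letters), so both length-$(n-1)$ prefixes already lie in $P_{n-1}$ and both endpoints sit in the unpadded set $P_{n-1}S\eta$. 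This removes the $\#S^{p+q}$ factor and gives the sharper bound $\#S\cdot\#P$. The one step you wave at --- ``a non-trivial state reached after reading $w$ from a nucleus element certifies $w\in P_{|w|}$'' --- does need a word: it uses that every element of the (minimal) nucleus is the terminus of a left-infinite path inside the nucleus, so that your finite path can be prolonged backwards to a post-critical one; this is a standard property of the nucleus but not literally the definition of post-critical finiteness. (Also, your ``precisely when'' is only needed, and only true, in one direction: a non-trivial state need not move the tail $\eta$.) Finally, both you and the paper tacitly assume that the two endpoints of every edge are cofinal, so that the minimal merging level $n$ exists and the decomposition graph is connected; on singular orbits (e.g.\ the edge from $1^\infty$ to $0^\infty$ in the odometer's Schreier graph $\Z$) this fails and the forest must be patched by hand. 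Since the paper's proof has the identical lacuna, it does not count against you.
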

We shall actually give a computable bound on the treewidth: let $G\xi$
be the vertex set of $\Gamma$, let $P$ be $G$'s post-critical set, let
$S$ be its alphabet, and in the transducer defining $G$ let $p,q$ be
respectively the maximal length of a path leading from a generator to
a cycle and from a cycle to the identity. Then the treewidth is at most
$\#P\cdot\#S^{p+q}$.
\begin{proof}
  As in the proof of Theorem~\ref{thm:msodecidable}, let $\Delta$ be
  the following tree: its vertex set is
  $\bigsqcup_{n\ge0}\sigma^n(G\xi)\times\{n\}$, and there is an edge
  between $(\eta,n)$ and $\sigma(\eta),n+1)$.  The bag at $(\eta,n)$
  is
  \[X_{(\eta,n)}=\{u v w\sigma^{p+q}(\eta):\text{$|u|=p$ and $v$ is the
      length-$n$ suffix of a post-critical ray $\in P$ and $|w|=q$}\}\cap G\xi.
  \]
  The cardinality estimate is obviously satisfied, and the bags
  containing any given $\eta\in G\xi$ form an $\#S$-regular rooted
  subtree of $\Delta$. Furthermore every edge of $\Gamma$, say with
  label $g\in G$, connects $u v w\eta$ to $u'v'w'\eta$ where $v$ is
  the suffix of a post-critical ray: $\Phi(g,u)=(u',h)$ where $h$ lies
  on a cycle or is already finitary, $\Phi(h,v)=(v',k)$ with $k$
  finitary, and $\Phi(k,w)=(w',1)$.
\end{proof}

\subsection{The Sierpiński gasket}\label{ss:sierpinski}
The Sierpiński gasket is ubiquitous in discussions on fractals and
graphs, and this text shall not be an exception. A remarkably simple
transducer produces the gasket as its digit tile, and the associated
graphs as Schreier graphs, see~\cite{grigorchuk-s:hanoi}:\\
\pgfdeclarelindenmayersystem{BW Sierpinski triangle}{
  \symbol{X}{\pgflsystemdrawforward}
  \symbol{Y}{\pgflsystemdrawforward}
  \rule{X -> X-Y+X+Y-X}
  \rule{Y -> YY}
}
\pgfdeclarelindenmayersystem{Sierpinski triangle}{
  \symbol{X}{\draw[red,sierp,-] (0,0) -- (\pgflsystemcurrentstep,0); \pgflsystemmoveforward}
  \symbol{Y}{\draw[green,sierp,-] (0,0) -- (\pgflsystemcurrentstep,0); \pgflsystemmoveforward}
  \symbol{Z}{\draw[blue,sierp,-] (0,0) -- (\pgflsystemcurrentstep,0); \pgflsystemmoveforward}
  \rule{W -> WfW}
  \rule{T -> +UWZ-W---U++YU-W-XW+++}
  \rule{U -> -TWZ+W+++T--YT+W+XW---}
}
\centerline{\begin{fsa}[baseline,scale=0.8]
    \node[state,red] (a) at (0:2) {$a$};
    \node[state,green] (b) at (120:2) {$b$};
    \node[state,blue] (c) at (240:2) {$c$};
    \node[state] (e) at (0,0) {$e$};
    \path (a) edge[out=-30,in=30,looseness=8] node[above=1mm] {$2|2$} (a)
    (b) edge[out=90,in=150,looseness=8] node[left] {$1|1$} (b)
    (c) edge[out=210,in=270,looseness=8] node[left] {$0|0$} (c)
    (a) edge node {$0|1,1|0$} (e)
    (b) edge node[left] {$0|2,2|0$} (e)
    (c) edge node[left] {$1|2,2|1$} (e);
    \begin{scope}[xshift=3cm,yshift=-3cm,scale=2,sierp/.style={thick}]
      \draw[green,-] (0,0) edge [in=180,out=240,min distance=2.5mm] (0,0);
      \clip (0,-0.1) rectangle (4.3,3);
      \draw[-] (0,0) l-system [l-system={Sierpinski triangle, axiom=T, step=0.18cm, order=5, angle=60},fill=white];
    \end{scope}
  \end{fsa}}

The group $\langle a,b,c\rangle$ is known as the ``$3$-peg Hanoi tower
group'', since its Schreier graph is well-known to be related to
solutions to the Hanoi towers puzzle: generators $a,b,c$ correspond to
moving the top disk respectively between pegs $0$ and $1$, $0$ and
$2$, or $1$ and $2$.

%%%%%%%%%%%%%%%%%%%%%%%%%%%%%%%%%%%%%%%%%%%%%%%%%%%%%%%%%%%%%%%% 
\section{Undecidability results}\label{ss:undecidable}
We now prove that the domino problem is undecidable on some examples
of self-similar graphs. The method of proof is uniform: simulate a
grid within the graph; then, since machines may be simulated on
grids, we see that the graph in question is capable of universal
computation, and therefore has undecidable domino problem.

Let $\Gamma$ be an $A$-labelled graph. By ``simulating a grid in
$\Gamma$'' we mean the following: there is a grid $\Delta$, and a
domino problem for $\Gamma$ that marks some vertices in $\Gamma$ as
representing vertices of $\Delta$, and some sequences of edges in
$\Gamma$ as representing edges of $\Delta$.

Even more precisely, assume $\Delta$ is $B$-labelled. The colouring of
$\Gamma$ must distinguish some of its vertices as being
``$\Delta$-vertices'', and each edge of $\Gamma$ may carry some number
of signals in $\{0,1\}\times B\times\{0,1\}$. It is required that
$\Delta$ coincides with the graph with vertex set the
$\Delta$-vertices, and an edge labelled $b$ for every path in $\Gamma$
coloured $(0,b,1)\cdots(1,b,1)\cdots(1,b,0)$ and joining
$\Delta$-vertices. For details
see~\cite{bartholdi-salo:ll}*{\S2}.

If $\Delta$ has unsolvable domino problem, then so does $\Gamma$:
indeed given any instance of a domino problem on $\Delta$, it may be
combined with the domino problem defining the simulation so as to
produce a domino problem for $\Gamma$; if that last problem were
solvable, so would be the original one.

If $\Delta$ is the subgraph of $\Gamma$ consisting of all
$A'$-labelled edges, for some subset $A'\subset A$, then $\Delta$ is
simulated by $\Gamma$. This was exploited for example
in~\cite{jeandel:translation} to prove that every group containing a
direct product of two infinite, finitely generated subgroups has
unsolvable domino problem.  However, the examples we consider here are
graphs that do not contain any grid as a subgraph.

Conversely, if $\Gamma$ simulates $\Delta$ then $\Delta$ is (up to
duplicating the vertices and edges of $\Gamma$ by a finite amount) a
minor of $\Gamma$. However, our definition of simulation requires some
amount of regularity (in the sense of regular languages) in the
extraction of the minor from $\Gamma$.

In more detail: we first reduce the \emph{seeded} domino problem on
$\Gamma$ to that of a grid, by constructing a tileset that, when
properly seeded, exhibits a grid as a minor of $\Gamma$. It seems
impossible to avoid the seed, since the grid will necessarily be quite
sparse in $\Gamma$, and in particular will ignore arbitrarily large
balls. We then show, using a sunny-side-up tileset, that the seed may
be distinguished by an (unseeded) tileset. The (unseeded) domino
problem is then equivalent to the seeded one, by Lemma~\ref{lem:ssu}.

The precise implementation of this plan depends on the graph, and I
will carry it out for two examples of self-similar graphs that seem at
the border of decidability/undecidability. The second one is
essentially equivalent to~\cite{barbieri-sablik:ssdomino}. In each
case, a simulation has to be defined \emph{ad hoc}, and I will not
attempt to make any general claim.

\subsection{The long range graph}\label{ss:longrange}
Consider the graph with vertex set $\Z$, and two kinds of edges: all
edges between $n$ to $n+1$, and a loop at $0$ and for all
$m\in\Z,s\in\N$ an edge between $2^s(2m-1)$ and $2^s(2m+1)$. It is
known as the ``long range graph'', a deterministic avatar of
long-range percolation~\cite{schulman:longrange1d}. It is one of the
simplest examples of ``$\omega$-periodic graphs'' considered
in~\cite{benjamini-hoffman:omega}, see also~\cite{bondarenko:growth}.\\
\centerline{\begin{tikzpicture}[scale=1.2,>=stealth']
    \clip (-5.5,-1) rectangle (5.5,1);
    \foreach\i in {-6,...,5} {
      \draw[->] (\i,0) -- +(1,0);
    }
    \foreach\i in {-7,-5,...,5} {
      \draw[->] (\i,0) edge[bend left] +(2,0);
    }
    \foreach\i in {-6,-2,...,5} {
      \draw[->] (\i,0) edge[bend left] +(4,0);
    }
    \foreach\i in {-12,-4,...,5} {
      \draw[->] (\i,0) edge[bend left=20] +(8,0);
    }
    \draw[->] (0,0) edge[loop below] ();
  \end{tikzpicture}}

This graph $\Gamma$ is the Schreier graph $G\cdot0^\infty$ of a
self-similar group given by the transducer\\
\centerline{\begin{fsa}[baseline,scale=1]
    \node[state] (t) at (0,0) {$t$};
    \node[state] (u) at (-3,0) {$u$};
    \node[state] (e) at (3,0) {$e$};
    \path (t) edge[loop above] node {$1|0$} ()
    (t) edge node {$0|1$} (e)
    (u) edge[loop left] node {$0|0$} ()
    (u) edge node {$1|1$} (t)
    (e) edge [loop right] node {$0|0,1|1$} ();
  \end{fsa}}

Indeed identify $\Z$ with all infinite words in $\{0,1\}^\infty$
ending in $0^\infty$ or $1^\infty$, via the binary expansion of
integers. Then the edges given by generator $t$ connect $n$ to $n+1$,
while generator $u$ connects $2^s(2m-1)$ to $2^s(2m+1)$. Set $A=\{t,u\}$.

\begin{prop}\label{prop:lrsimul}
  There is a tileset $\Theta_0\subset B_0\times\{t,u\}\times B_0$, and
  a colour $b_0\in B_0$, such that $X_{\Theta_0}$ contains a unique
  tiling $\tau$ with $\tau(0)=b_0$; and this tiling simulates a grid.
\end{prop}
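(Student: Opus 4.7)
The plan is to construct $\Theta_0$ and $b_0$ in two layers: a \emph{skeleton} layer that uniquely pins down the tiling from the seed, and a \emph{grid} layer carrying signals that realise a grid structure inside the unique tiling.

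For the skeleton, the key observation is that $\Gamma$ already encodes the 2-adic valuation $\nu(n)$ of each vertex intrinsically: the $u$-edge at $n$ has length $2^{\nu(n)+1}$, so different levels are distinguished by the graph itself. The tile at $n$ therefore needs to record only a bounded amount of local data --- a sign flag for the side of $0$, the parity bit $n\bmod 2$, and a streaming carry bit propagated along $t$-edges from the seed $b_0$ at $0$ --- together with the tileset requirement that the two endpoints of every $u$-edge carry matching data. Because $u$-edges always connect two vertices of the same level, this matching constraint is compatible with the canonical tiling; a straightforward induction on $|n|$, propagating outward from $0$ along $t$-edges and confirming consistency along $u$-edges, shows that the valid tiling seeded by $b_0$ at $0$ is unique.

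For the grid, I identify the positive vertices of $\Gamma$ with a half-plane via $(s,m)\leftrightarrow 2^s(2m-1)$ for $s\ge 0$ and $m\ge 1$. Horizontal grid edges from $(s,m)$ to $(s,m+1)$ coincide exactly with $u$-edges of $\Gamma$ and need no extra signals. Vertical grid edges from $(s,m)$ to its image at the next level up are realised by the $t$-path of length $2^s$ connecting $2^s(2m-1)$ to the nearest level-$(s+1)$ vertex, and are marked by a signal in a new component of $B_0$: launched at $(s,m)$, carried along consecutive $t$-edges, and terminated on recognition (via the skeleton) of a vertex whose incident $u$-edge is twice as long as the source's. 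The resulting tree-of-lines structure carries all the combinatorial complexity needed to reduce the undecidability of the grid domino problem, in the spirit of~\cite{bartholdi-salo:ll}.

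The main obstacle is that the level $s$ is unbounded, so a vertical signal may need to travel a $t$-path of length $2^s$ which a finite tileset cannot count directly. The resolution is that the signal never needs to know $s$ explicitly: termination is detected \emph{locally}, via the skeleton's $u$-edge matching constraint, which transports level-distinguishing information through the graph's intrinsic long-range edges and makes the level jump recognisable at a single vertex. All remaining constraints then reduce to bounded local conditions, yielding the desired $\Theta_0$.
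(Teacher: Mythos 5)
Your overall architecture (a seed-pinned skeleton layer plus a signal layer realising long edges as monochromatic $t$- or $u$-paths between marked vertices) matches the paper's strategy, and the uniqueness-by-outward-induction claim for the skeleton is plausible. But the combinatorial object you end up simulating is not a grid, and this is a genuine gap, not a stylistic difference.

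Under your identification $(s,m)\leftrightarrow 2^s(2m-1)$, the ``vertical'' move sends $2^s(2m-1)$ to the unique adjacent multiple of $2^{s+1}$ of valuation exactly $s+1$; both $(s,2j-1)$ and $(s,2j)$ land on $(s+1,j)$. So the vertical maps are $2$-to-$1$, and what you build is a binary tree of lines --- the hyperbolic-horoball structure, exactly the object the paper uses for the Barbieri--Sablik graph in \S\ref{ss:bs}, not the Euclidean grid asserted in the proposition. Your closing claim that this ``tree-of-lines structure carries all the combinatorial complexity needed to reduce the undecidability of the grid domino problem'' is where the proof actually has to happen, and it is not true as stated: the horoball is Gromov-hyperbolic, contains no regularly embedded copy of $\Z^2$, and its domino problem is undecidable only by the substantially harder hyperbolic-plane results of Kari and Margenstern, not by reduction from the grid. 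Even granting those results you would prove undecidability of the domino problem on $\Gamma$, but not the proposition as written, and you would lose the paper's subsequent one-line deduction of Theorem~\ref{thm:longrange} from the classical octant result of Wang/Berger.

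The idea you are missing is the paper's choice of a \emph{doubly} sparse marked set. After marking the sign, the powers of $2$ (as the positive vertices whose $u^{-1}$-neighbour is negative), and an even/odd parity constant on each dyadic strip $[2^j,2^{j+1})$, one marks by $q$ the vertices whose strip-parity differs from that of their $u^{-1}$-neighbour; these are exactly the integers $2^n+2^m$ with $n>m$. With ``follow $u^{\pm1}$ to the next $q$-mark'' as one adjacency and ``follow $t^{\pm1}$ to the next $q$-mark'' as the other, this set is an honest octant $\{(n,m):n>m\}$: rows of constant $m$ live on the level-$m$ $u$-line and columns of constant $n$ live inside the strip $[2^n,2^{n+1})$. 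This also dissolves your ``main obstacle'': no signal ever needs to count to $2^s$, because termination is simply ``the next $q$-marked vertex'', which the skeleton makes locally recognisable. Your skeleton (sign, $n\bmod2$, a carry bit) does not obviously carry enough information to recognise these termination points, but that issue is secondary to the grid-versus-tree problem above.
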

\begin{proof}
  We use dominoes to impose successively more colourings on $\Gamma$;
  the colours combine, so that each vertex will have many different
  colours at the end of the process. We are in fact imposing a
  sequence of domino colourings on $\Gamma$, with each one making use
  of the previous colours.
  \begin{enumerate}
  \item We first mark $0$ with a special colour $0$ (this is specified
    by the seed colour $b_0$).
  \item We mark all positive integers by $+$, and all negative ones by
    $-$. This is done by choosing $B_1=\{0,+,-\}$ and
    $\Theta_1=\{(-,t,-),(-,t,0),(0,t,+),(+,t,+)\}\cup(B_1\times\{u\}\times
    B_1)$.
  \item We mark all powers of $2$ by $p$. This is done by selecting
    all vertices marked $+$ and whose $u^{-1}$-neighbour is marked
    $-$: choose $B_2=B_1\times\{p,\mathvisiblespace\}$ and
    \[\Theta_2=\{((b,c),a,(b',c')):((b,a,b')=(-,u,+))\Leftrightarrow(c'=p)\}.
    \]
  \item Mark all strips of integers between powers of $2$ as $e$
    (even) or $o$ (odd), starting with $-\N$ marked as $e$. This is
    done by setting $B_3=B_2\times\{e,o,p_e,p_o\}$, forcing $0$ to be
    marked $e$, copying the $p$ marks from $B_2$ as $p_e$ or $p_o$,
    and forbidding patterns $(e,t,o)$, $(o,t,e)$, $(e,t,p_o)$,
    $(o,t,p_e)$, $(p_e,t,e)$ and $(p_o,t,o)$ in the new layer.
  \item Mark all integers of the form $2^n+2^m$, for $n>m$, as
    $q$. This is done by setting
    $B_4=B_3\times\{q,\mathvisiblespace\}$, and (just as we marked
    before the powers of $2$) marking by $q$ all integers having a
    different parity ($e/o$) than their $u^{-1}$-neighbour.
  \end{enumerate}

  The construction above clearly comes from a finite collection
  $\Theta_0\subset B_0\times\{t,u\}\times B_0$ of dominoes, which
  produces as unique colouring the specified marks.

  The vertices marked by a $q$ form a grid, more precisely an octant
  $\{(n,m):n>m\}$, with $(n,m)$ represented as $2^n+2^m$. It remains
  to show how the neighbourhood relation in this octant can be
  realized. The edge between $(n,m)$ and $(n\pm1,m)$ is realized by:
  starting from a $q$-marked vertex, follow $u^{\pm1}$ to the next
  $q$-marked vertex. The edge between $(n,m)$ and $(n,m\pm1)$ is
  realized by: starting from a $q$-marked vertex, follow $t^{\pm1}$ to
  the next $q$-marked vertex.
\end{proof}

The reduction of the tiling problem on the octant to the seeded tiling
problem on $\Gamma$ may be seen quite explicitly as follows: given an
instance $\Theta\subseteq B\times\{S,E,N,W\}\times B$ of the domino
problem on the octant, construct a multi-layered domino problem on
$\Gamma$: each vertex has one colour in $B_0$ and two colours in $B$,
one for the vertical direction and one for the horizontal one. The
domino rules impose that the colour in $B_0$ marks $q$-vertices as
above; that non-marked vertices propagate their horizontal colour
along $u$-edges and their vertical colour along $t$-edges; and that
$q$-marked vertices check that the propagated vertical and horizontal
colours match $\Theta$.

Note that the octant does not have vertices $(n,n)$, so there is no
vertical edge from $(n,n-1)$ to $(n,n)$. The domino tiling on the
octant ignores the $N$ direction at $(n-1,n)$, at likewise the domino
rules propagating colours vertically may be required to ignore their
constraint as they cross through a vertex marked $p$.

The graph $\Gamma$ is highly intransitive: the origin $0$ is the
unique vertex having a loop labelled $u$. We use this feature to
distinguish it among all other vertices:
\begin{prop}\label{prop:lrssu}
  The sunny-side-up $S_0$ is sofic on $\Gamma$.
\end{prop}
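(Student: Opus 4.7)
The plan is to combine two independent layers of tiles: a sunny-side-up on the $t$-direction that forces at most one marked vertex, and the marker provided by Lemma~\ref{lem:localmark} applied to a single-vertex $u$-loop, which forces at least one marked vertex, necessarily at $0$. Since $0$ is the unique vertex of $\Gamma$ whose outgoing $u$-edge is a loop, every element of $\Aut(\Gamma)$ fixes $0$; its orbit is thus $\{0\}$, which is finite, so $S_0$ consists solely of the configuration $\delta_0$ with a single $1$ at $0$. The goal is to construct $(\Theta,\pi)$ so that every valid tiling projects to $\delta_0$, and at least one valid tiling exists.

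For the first layer I would use colors $\{L,M,R\}$ with $t$-transitions $(L,t,L),(L,t,M),(M,t,R),(R,t,R)$, and no constraint on $u$-transitions. Because the $t$-edges give $\Gamma$ the structure of a directed $\Z$-line, valid tilings of this layer have the form ``all-$L$'', ``all-$R$'', or $L^*MR^*$; in particular, at most one vertex is marked $M$. For the second layer I would apply Lemma~\ref{lem:localmark} to the rooted graph $(\Delta,x_0)$ consisting of a single vertex carrying a $u$-loop at $x_0$; this produces a tileset $\Theta'$ over some colours $B'$ and a distinguished subset $C\subseteq B'$ such that every valid $\Theta'$-tiling $\tau$ satisfies $\tau^{-1}(C) \supseteq \{0\}$, with equality achievable by some tiling.

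I would then form the composite tileset over the colour set $\{(b_1,b_2)\in\{L,M,R\}\times B' : b_1 = M \Leftrightarrow b_2\in C\}$, letting the $t$- and $u$-constraints act componentwise, and project by $\pi(b_1,b_2) = [b_1 = M]$. In any valid composite tiling the $M$-set equals the $C$-set; the latter contains $0$ by the lemma, and the former has at most one element by the sunny-side-up, so both equal $\{0\}$ and the tiling projects to $\delta_0$. A valid tiling exists by superposing the tight lemma tiling (whose $C$-set is exactly $\{0\}$) with the sunny-side-up whose $M$ sits at $0$. The only point that requires care is matching the ``at most one'' guarantee of the first layer with the ``at least one, containing $0$'' guarantee of the second so that the two pinch down to exactly $\{0\}$; this is accomplished by the local biconditional $b_1 = M \Leftrightarrow b_2\in C$, which is encodable simply as a restriction on the admissible composite colours.
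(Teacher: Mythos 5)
Your proof is correct and is essentially the paper's argument in modular form: the paper uses a single five-colour tileset $\{0,-_0,-_1,+_0,+_1\}$ in which the colour $0$ simultaneously plays the role of your $M$ (via the monotone $-\to 0\to +$ structure along $t$-edges, giving ``at most one'') and of membership in $C$ (via the index swap $*_0\leftrightarrow *_1$ on $u$-edges, which forces the $u$-loop vertex, and only it, to escape the swap), whereas you factor these two mechanisms into independent layers and glue them with a local biconditional, outsourcing the loop detection to Lemma~\ref{lem:localmark}. Both arguments are sound; yours is a clean fibre-product presentation of the same tileset.
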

\begin{proof}
  Choose as colours $B=\{0,-_0,-_1,+_0,+_1\}$, and consider the tileset
  \[\Theta=\{(0,u,0),(0,t,+_*),(+_*,t,+_*),(-_*,t,-_*),(-_*,t,0),(*_0,u,*_1),(*_1,u,*_0)\}.\]
  Then at most one vertex of $\Gamma$ may be coloured $0$: all its
  neighbours in the $t$ direction are coloured $\{+_0,+_1\}$, and all
  its neighbours in the $t^{-1}$-direction are coloured
  $\{-_0,-_1\}$. This vertex coloured $0$, if it exists, must be the
  origin because its $u$-neighbour is also coloured $0$. On the other
  hand, the origin has to be coloured $0$, since all $u$-edges have
  distinct colours at their extremities if they're not $0$.

  It remains to see that $X_\Theta$ is not empty. The colouring in
  which $0$ is coloured $0$ and $2^s(2m+1)$ is coloured
  $\operatorname{sign}(m-\frac12)_{m\bmod 2}$ is legal.
\end{proof}

\noindent We are ready to put the pieces together:
\begin{proof}[Proof of Theorem~\ref{thm:longrange}]
  We invoke the classical fact that the domino tiling is undecidable
  on the octant~\cite{wang:ptpr2}. By Proposition~\ref{prop:lrsimul}
  the seeded domino problem is undecidable on $\Gamma$. By
  Proposition~\ref{prop:lrssu} and Lemma~\ref{lem:ssu} the domino
  problem is equivalent to the seeded domino problem on $\Gamma$.
\end{proof}

If the reader got the impression that the domino problem was deduced
from the seeded domino problem using a cheap trick, it's because it's
the truth: the orbit $G\cdot 0^\infty$ is special in that its position
$0^\infty$ is marked by a $u$-loop. A more sophisticated argument would be
required to study the domino problem on other orbits.

\subsection{The Barbieri-Sablik $H$-graph}\label{ss:bs}
Our second example is a variant of a graph considered by
Barbieri-Sablik~\cite{barbieri-sablik:ssdomino} in their investigation
of domino problems on self-similar structures,
see~\S\ref{ss:bsss}. Consider the graph with vertex set $\N\times\Z$,
and two kinds of edges: connecting $(m,n)$ with $(m,n\pm1)$; and
connecting
$(2^s m-1,2^s n)$ and $(2^s m,2^s n)$ for all $s\ge0,m\ge1,n\in\Z$.\\
\centerline{\begin{tikzpicture}[scale=0.5] \node[label={left:\small
      $0$}] at (0.2,0) {}; \node[label={left:\small $8$}] at (0.2,8)
    {}; \node[label={below:\small $0$}] at (0,-0.2) {};
    \node[label={below:\small $8$}] at (8,-0.2) {};
    \node[label={below:\small $16$}] at (16,-0.2) {}; \clip
    (-0.5,-0.5) rectangle (16.5,8.5); \foreach\i in {0,...,16} \draw
    (\i,-1) -- (\i,9); \foreach\s in {1,2,4,8,16} {
      \pgfmathsetmacro\threes{3*\s} \foreach\i in {\s,\threes,...,17}
      { \foreach\j in {0,\s,...,8} \draw (\i-1,\j) -- +(1,0); } }
  \end{tikzpicture}}

This graph is also the Schreier graph of a transducer group. In the
transducer below, the alphabet $S$ is $\{00,01,10,11\}$ and the $?$
symbols stand for a wild card:\\
\centerline{\begin{fsa}[baseline]
    \node[state] (y) at (0,0) {$y$};
    \node[state] (x) at (2.5,0) {$x$};
    \node[state] (e) at (5,0) {$e$};
    \node[state] (z) at (7.5,0) {$z$};
    \path (y) edge[out=150,in=210,loop] node[left] {$00|00$} ()
    (y) edge node[below] {$10|10$} (x)
    (y) edge[bend left] node {$?1|?1$} (e)
    (x) edge node[below] {$1?|0?, 0?|1?$} (e)
    (z) edge[out=-30,in=30,loop] node[right] {$?1|?0$} ()
    (z) edge node {$?0|?1$} (e);
  \end{fsa}}

There is a natural bijection between $\N\times\Z$ and infinite words
in $S^\N$ ending in $(00)^\infty$ or $(01)^\infty$: firstly,
$S=\{0,1\}\times\{0,1\}$ so every sequence $w\in S^\N$ corresponds to
a pair of sequences $(u,v)\in\{0,1\}^\N$. Now given
$(m,n)\in\N\times\Z$, the pair of sequences representing it is $(u,v)$
with $u$ the Gray encoding of $m$ and $v$ the binary encoding of
$n$. Recall that the binary encoding of $n=\sum_{i\ge0}v_i2^i$, with
almost all $v_i=0$ or almost all $v_i=1$, is $v_0 v_1\dots$; and that
the Gray encoding of $m=\sum_{i\ge0}u_i2^i$ is
$(u_0\oplus u_1)(u_1\oplus u_2)\dots$ with $\oplus$ the exclusive-or
of bits $\in\{0,1\}$. It is then clear that generator $z$ connects
$(u,1^s0v_{s+1}\dots)$ to $(u,0^s1v_{s+1}\dots)$ and therefore $(m,n)$
to $(m,n+1)$; that generator $x$ has order $2$ and connects
$(0u_1\dots,v)$ with $(1u_1\dots,v)$ and therefore $(2m,n)$ with
$(2m+1,n)$; and that generator $y$ also has order $2$ and connects
$(0^s10u_{s+2}\dots,0^{s+1}v_{s+2}\dots)$ with
$(0^s11u_{s+2}\dots,0^{s+1}v_{s+2}\dots)$ and therefore
$(2^s(2m+1)-1,2^{s+1}n)$ with $(2^s(2m+1),2^{s+1}n)$. This concludes
the proof that the graph $\Gamma$ is indeed the Schreier graph of the
group $\langle x,y,z\rangle$. Set $N=\{x,y,z\}$, the nucleus of the
group generated by $\{x,y,z\}$.

We shall first show that $\Gamma$ simulates the ``hyperbolic
horoball''. This is the graph $\Delta$ with vertex set
$\{(2^{s+1}-1,2^s n):s\ge0,n\in\Z\}$, and with edges between
$(2^{s+1}-1,2^s(2n))$ and $(2^{s+2}-1,2^{s+1}n)$ and between
$(2^{s+1}-1,2^s n)$ and $(2^{s+1}-1,2^s(n+1))$. It corresponds (after
$90^\circ$ rotation) to a tiling of the horoball $\{\Im(z)\ge1\}$ in the
hyperbolic plane by pentagons with euclidean-straight sides and angles
$(90^\circ,90^\circ,90^\circ,90^\circ,180^\circ)$.

\begin{prop}\label{prop:bssimul}
  The graph $\Gamma$, when rooted at $(0,0)$, simulates the hyperbolic
  horoball.
\end{prop}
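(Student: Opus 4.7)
The plan is to construct a multi-layered tileset on $\Gamma$ that, combined with the seed at $(0,0)$, will mark specific $\Gamma$-vertices as horoball vertices of $\Delta$ and specific edge-paths as horoball edges. The key technical ingredient, which I verify by direct computation with the transducer for the generators $x, y, z$, is that $y$ fixes the vertex $(c, r)$ precisely when either $c = 0$, or $v_2(r) < \ell(c)$ where $\ell(c) := \max(v_2(c), v_2(c+1))$ and $v_2$ denotes the $2$-adic valuation with the convention $v_2(0) = \infty$. In particular, a horoball column $c = 2^{s+1}-1$ has $\ell(c) = s+1$.

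I will first anchor the simulation using the seed: one layer of colours propagates a ``column~$0$'' marker vertically via $z$-edges, and another propagates a ``row~$0$'' marker horizontally via the alternating walk $(0, 0) \to (1, 0) \to (2, 0) \to \cdots$, which is well-defined because at row~$0$ every $x$- and non-loop $y$-edge between consecutive columns exists, as noted in the description of $\Gamma$.

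The central step identifies horoball columns by a ``stuck walk'' trick. For each row $r \geq 1$, I introduce a token propagating rightward from $(0, r)$ along $x$- and non-loop $y$-edges: from an even column the token always continues via $x$, and from an odd column $c$ it continues via $y$ iff $(c, r)$ is not a $y$-loop, i.e., iff $v_2(r) \geq \ell(c) = v_2(c+1)$. Hence the token gets stuck at the smallest odd $c$ with $\ell(c) > v_2(r)$, which is exactly $c = 2^{v_2(r)+1}-1$, the horoball column at level $v_2(r)$. Propagating this marking vertically via $z$-edges then labels the entire column with its horoball level. The horoball vertices $(c, r)$ with $c = 2^{s+1}-1$ and $v_2(r) \geq s$ are then marked by combining the column label with either the stuck-walk detection itself (when $v_2(r) = s$) or the $y$-non-loop condition at $(c, r)$ (when $v_2(r) > s$); the row $r = 0$ is handled by the latter since $v_2(0) = \infty \geq s+1$.

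Finally, each horoball edge will be simulated as a path in $\Gamma$: each horizontal $\Delta$-edge at level $s$ is realized by a $z$-path of length $2^s$ between two horoball vertices on a common column, and each vertical $\Delta$-edge by an alternating $x$-$y$ horizontal path of length $2^{s+1}$ at row $2^{s+1} n$ from column $2^{s+1}-1$ to column $2^{s+2}-1$ (which is valid by the same stuck-walk analysis at $v_2(r) = s+1$). Endpoint colour information for any $\Delta$-tileset is carried along these paths via channel colours, with dominoes enforcing compatibility at the horoball-vertex endpoints. The hard part is the identification of horoball columns, since they occur at exponentially-spaced positions $1, 3, 7, 15, \ldots$ and cannot be counted directly by any bounded-memory tileset; the ``stuck walk'' observation resolves this by offloading the exponential information onto the vertical coordinate, so that the row's $2$-adic valuation directly furnishes the level of the horoball column at which the walk stops.
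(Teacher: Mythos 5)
Your key computation is correct and is precisely the mechanism of the paper's proof: for $c\ge1$ the generator $y$ fixes $(c,r)$ exactly when $v_2(r)<\ell(c)$ with $\ell(c)=\max(v_2(c),v_2(c+1))$, so the first $y$-loop along row $r$ --- your ``stuck walk'' --- sits at column $2^{v_2(r)+1}-1$, the horoball column of level $v_2(r)$. The paper's colour $b_2$ marks exactly this position, the anchoring of row $0$ as the unique loop-free row and of column $0$ by its universal $y$-loop is the same, and so is the realization of the two kinds of horoball edges by $z$-paths of length $2^s$ and alternating $x,y$-paths of length $2^{s+1}$. So in substance this is the same proof.

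One step, however, is genuinely missing rather than merely routine: you must rule out \emph{spurious} horoball columns. Your rule ``the horoball-column bit is constant along $z$-edges and is set wherever the walk is stuck'' forces the bit to be on along every column $2^{s+1}-1$, but nothing you have written prevents a valid tiling from also setting it on, say, column $5$; its $y$-non-loop rows (those with $v_2(r)\ge\ell(5)=1$) would then be marked as horoball vertices by your ``column label and ($y$-non-loop or stuck)'' criterion, the simulated graph would strictly contain $\Delta$, and the reduction would break (the paper's notion of simulation requires the marked graph to \emph{coincide} with $\Delta$). The paper closes this with the forced vertical pattern $b_0(b_3^*b_2b_4^*b_1)^\infty$ on any column claiming to be a horoball column: between two consecutive $y$-non-loop rows there must lie exactly one stuck row. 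On $c=2^{s+1}-1$ the non-loop rows are the multiples of $2^{s+1}$ and the unique intermediate row of valuation $s$ is stuck, so the pattern is realizable; on a non-horoball odd column with $\ell(c)=t$ the non-loop rows are the multiples of $2^t$ and \emph{no} row of that column is ever a stuck position, so the pattern is unrealizable and the column cannot carry the bit. Adding this alternation constraint (a local, domino-enforceable rule) to your construction completes the argument; without it the proposition is not yet proved.
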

It is drawn below in thick lines, on top of $\Gamma$ (for which the
generators $x,y,z$ are
drawn respectively in red, green, blue):\\
\centerline{\begin{tikzpicture}[scale=0.6] \node[label={left:\small
      $0$}] at (0,0) {}; \node[label={left:\small $8$}] at (0,8) {};
    \node[label={below:\small $0$}] at (0,-0.2) {};
    \node[label={below:\small $8$}] at (8,-0.2) {};
    \node[label={below:\small $16$}] at (16,-0.2) {};
    \clip (-0.5,-0.5) rectangle (16.5,8.5);
    \foreach\i in {0,...,16} \draw[blue,thin] (\i,-1) -- (\i,9);
    \foreach\i in {0,2,...,16} { \foreach\j in {0,...,8} \draw[red,thin] (\i,\j) -- +(1,0); }
    \foreach\s in {2,4,8,16} {
      \pgfmathsetmacro\threes{3*\s}
      \foreach\i in {\s,\threes,...,16} { \foreach\j in {0,\s,...,8} \draw[green,thin] (\i-1,\j) -- +(1,0); }
    }
    \foreach\i in {0,...,16} {
      \draw[green] (0,\i) .. controls +(135:0.6) and +(225:0.6) .. +(0,0);
    }
    \foreach\s in {2,4,8,16} {
      \foreach\i in {0,\s,...,16} {
        \foreach\j in {0,\s,...,16} {
          \draw[green] (\i+\s-1,\j+0.5*\s) .. controls +(45:0.6) and +(-45:0.6) .. +(0,0);
          \draw[green] (\i+\s,\j+0.5*\s) .. controls +(135:0.6) and +(225:0.6) .. +(0,0);
        }
      }
    }
    \foreach\i in {1,3,7,15} \draw[very thick,blue] (\i,-1) -- (\i,9);
    \foreach\i/\j in {0/18,2/4,4/8,6/4,8/16} {
      \foreach\k in {0,2,...,\j} {
        \ifnum\k>2
        \draw[very thick,red] (\k-2,\i) -- +(1,0);
        \draw[very thick,green] (\k-3,\i) -- +(1,0);
        \fi
      }
    }
    \foreach\s in {1,2,4,8} {
      \foreach\i in {0,...,16} {
        \filldraw[fill=white] (2*\s-1,\i*\s) circle (1mm);
      }
    }
  \end{tikzpicture}}

\begin{proof}
  The representation of $\Delta$ as a subset of $\N\times\Z$ matches
  that of $\Gamma$, and we shall construct a domino problem whose
  unique solution is a marking that singles out the vertices of
  $\Delta$ along with its edges.  Consider the set of colours
  \[B_0=\{a_0,a_1,b_0,\dots,b_4,c_0,c_1,c_2,d_0,d_1,d_2\}\]
  the generating set $S=\{x,y,z\}$ and the tileset
  \[\Theta_0=\left\{\begin{matrix}
        (a_0,x,b_0)&(b_0,x,c_0)&(c_0,x,d_0)&(a_1,x,b_1)&(a_1,x,b_2)\\
        (b_1,x,c_1)&(b_2,x,c_1)&(c_1,x,d_1)&(b_3,x,c_2)&(b_4,x,c_2)\\
        (c_2,x,d_2)\\
        (a_0,y,a_0)&(b_0,y,c_0)&(c_0,y,d_0)&(a_1,y,a_1)&(b_1,y,c_1)\\
        (b_2,y,b_2)&(c_1,y,d_1)&(b_3,y,b_3)&(b_4,y,b_4)&(c_2,y,c_2)\\
        (c_2,y,d_2)&(d_2,y,d_2)\\
        (a_0,z,a_1)&(a_1,z,a_0)&(a_1,z,a_1)&(b_0,z,b_2)&(b_0,z,b_3)\\
        (b_1,z,b_2)&(b_1,z,b_3)&(b_2,z,b_0)&(b_2,z,b_1)&(b_2,z,b_4)\\
        (b_3,z,b_2)&(b_3,z,b_3)&(b_4,z,b_0)&(b_4,z,b_1)&(b_4,z,b_4)\\
        (c_0,z,c_2)&(c_1,z,c_2)&(c_2,z,c_0)&(c_2,z,c_1)&(c_2,z,c_2)\\
        (d_0,z,d_2)&(d_1,z,d_2)&(d_2,z,d_0)&(d_2,z,d_1)&(d_2,z,d_2)
      \end{matrix}\right\},
  \]
  more conveniently given as the edges in the following graph:\\
  \centerline{\begin{tikzpicture}[xscale=1.5,yscale=0.5,every node/.style={inner sep=0.5mm},every loop/.style={},>=stealth']
    \node (a0) at (0,0) {$a_0$};
    \node (b0) at (2,0) {$b_0$};
    \node (c0) at (4,0) {$c_0$};
    \node (d0) at (6,0) {$d_0$};
    \node (a1) at (0,3) {$a_1$};
    \node (b1) at (2,2) {$b_1$};
    \node (b2) at (2,4) {$b_2$};
    \node (c1) at (4,3) {$c_1$};
    \node (d1) at (6,3) {$d_1$};
    \node (b3) at (2,6) {$b_3$};
    \node (b4) at (2,8) {$b_4$};
    \node (c2) at (4,7) {$c_2$};
    \node (d2) at (6,7) {$d_2$};
    \draw (a0) edge[loop left,-] node[left] {$y$} () edge node[above] {$x$} (b0)
    (b0) edge[bend left] node[above] {$x$} (c0) edge[bend right] node[below] {$y$} (c0)
    (c0) edge[bend left] node[above] {$x$} (d0) edge[bend right] node[below] {$y$} (d0)
    (a1) edge[loop left,-] node[left] {$y$} () edge node[above] {$x$} (b1) edge node[above,pos=0.4] {$x$} (b2)
    (b1) edge[bend left] node[above] {$x$} (c1) edge[bend right] node[below] {$y$} (c1)
    (b2) edge node[above] {$x$} (c1) edge[loop left] node[left] {$y$} ()
    (c1) edge[bend left] node[above] {$x$} (d1) edge[bend right] node[below] {$y$} (d1)
    (b3) edge node[above] {$x$} (c2) edge[loop left] node[left] {$y$} ()
    (b4) edge node[above] {$x$} (c2) edge[loop left] node[left] {$y$} ()
    (c2) edge[loop left] node[left] {$y$} () edge[bend left] node[above] {$x$} (d2) edge[bend right] node[below] {$y$} (d2)
    (d2) edge[loop right] node[right] {$y$} ()
    (a0) edge[<->] node[left] {$z$} (a1)
    (a1) edge[loop above,->] node[left=0.5mm] {$z$} ()
    (b0) edge[bend left=10,<->] (b2) edge[bend left=20] (b3)
    (b1) edge[<->] node[right=-0.5mm] {$z$} (b2) edge[bend left=20,->] node[left,pos=0.7] {$z$} (b3)
    (b2) edge[->,bend right=8] node[right,pos=0.4] {$z$} (b4)
    (b3) edge[loop above,->] node[left=0.5mm] {$z$} () edge[->] node[left] {$z$} (b2)
    (b4) edge[loop above,->] node[left=0.5mm] {$z$} () edge[bend left=15,->] (b1) edge[bend left=15,->] node[right,pos=0.3] {$z$} (b0)
    (c0) edge[bend right=10,<->] node[right,pos=0.6] {$z$} (c2)
    (c1) edge[<->] node[left] {$z$} (c2)
    (c2) edge[loop above,->] node[left=0.5mm] {$z$} ()
    (d0) edge[bend right=10,<->] node[right,pos=0.6] {$z$} (d2)
    (d1) edge[<->] node[left] {$z$} (d2)
    (d2) edge[loop above,->] node[left=0.5mm] {$z$} ();
    \end{tikzpicture}}

  I claim that, if the origin $(0,0)$ is coloured $a_0$ in a colouring
  respecting $\Theta_0$, then the nodes coloured $b_0,b_1,b_2$
  constitute the vertices of the hyperbolic horoball $\Delta$; that
  horizontal edges are marked by following an arbitrary power of $x y$
  over vertices coloured $\{c_1,d_1\}$ till the next vertex of
  $\Delta$ is reached; and that vertical edges are marked by following
  an arbitrary power of $z$ over vertices coloured $\{b_3,b_4\}$ till
  the next vertex of $\Delta$ is reached.

  These claims follow from a series of ``Sudoku'' deductions. The
  rules first imply that the horizontal axis is coloured
  $a_0((b_0|d_0)c_0)^\infty$, and that all the colours on a vertical
  line share the same letter $(a,b,c,d)$. If any vertex along the
  vertical axis is coloured $a_0$, then the corresponding row must
  also be coloured $a_0((b_0|d_0)c_0)^\infty$, so in particular it
  cannot contain any $y$ loop. However, all rows at height $\ne0$
  contain such a $y$ loop, so the vertical axis must be entirely
  coloured $a_1$, except for the origin.

  Now if some vertex on the vertical axis is coloured $a_1$, then the
  corresponding row is coloured
  $a_1((b_1|d_1)c_1)^*b_2c_2((d_2c_2)^*(b_3|b_4))^\infty$ with a $b_2$
  at the position of the first $y$ loop. Since this first loop appears
  at abscissa $2^{s+1}-1$ for some $s\ge0$, the columns
  $\{2^{s+1}-1\}\times\Z$ are all coloured using
  $\{b_0,\dots,b_4\}$. In the upwards direction, they are coloured
  $b_0(b_3^*b_2b_4^*b_1)^\infty$, and symmetrically
  downwards. Combining the conditions on the rows and columns, the
  colour of vertex $(2^{s+1}-1,2^s n)$ is $b_0$ if $n=0$, is $b_1$ if
  $n$ is even, and is $b_2$ if $n$ is odd.

  If furthermore $s>0$, then along that column $\{2^{s+1}-1\}\times\Z$
  there are vertices coloured $b_3$ and $b_4$, at
  $(2^{s+1}-1,2^{s-1}n)$ with $n$ odd. These are connected by a
  sequence of $x$ and $y$ to $(2^s,2^{s-1}n)$, along a horizontal
  segment coloured using $\{c_2,d_2\}$; so all columns
  $\{2m-1\}\times\Z$ with $m>0$ not a power of $2$ are coloured using
  $\{d_0,d_1,d_2\}$.

  The colouring is thus entirely specified by the dominoes, and the
  hyperbolic horoball appears exactly at the claimed position. Its
  vertices are connected, vertically, by sequences of $b_3$ or $b_4$,
  and horizontally by sequences of $c_1d_1$:\\
  \centerline{\begin{tikzpicture}[scale=0.7,cnode/.style={inner sep=0mm,fill=white}]
    \node[label={left:\small $0$}] at (-0.2,0) {};
    \node[label={left:\small $8$}] at (-0.2,8) {};
    \node[label={below:\small $0$}] at (0,-0.2) {};
    \node[label={below:\small $8$}] at (8,-0.2) {};
    \node[label={below:\small $16$}] at (16,-0.2) {};
    \clip (-0.5,-0.5) rectangle (16.5,8.5);
    \foreach\i in {0,...,16} \draw[blue,very thin] (\i,-1) -- (\i,9);
    \foreach\i in {0,2,...,16} { \foreach\j in {0,...,8} \draw[red,very thin] (\i,\j) -- +(1,0); }
    \foreach\s in {2,4,8,16} {
      \pgfmathsetmacro\threes{3*\s}
      \foreach\i in {\s,\threes,...,16} { \foreach\j in {0,\s,...,8} \draw[green,very thin] (\i-1,\j) -- +(1,0); }
    }
    \foreach\i in {0,...,16} {
      \draw[green] (0,\i) .. controls +(135:0.6) and +(225:0.6) .. +(0,0);
    }
    \foreach\s in {2,4,8,16} {
      \foreach\i in {0,\s,...,16} {
        \foreach\j in {0,\s,...,16} {
          \draw[green] (\i+\s-1,\j+0.5*\s) .. controls +(45:0.6) and +(-45:0.6) .. +(0,0);
          \draw[green] (\i+\s,\j+0.5*\s) .. controls +(135:0.6) and +(225:0.6) .. +(0,0);
        }
      }
    }
    \foreach\i in {1,...,16} \draw[very thick,blue] (\i,-1)  -- (\i,9);
    
    \foreach\i/\j in {0/18,2/4,4/8,6/4,8/16} {
      \foreach\k in {0,2,...,\j} {
        \ifnum\k>2\draw[very thick,red] (\k-2,\i) -- +(1,0);\fi
      }
    }
    \foreach\i/\j in {5/0,5/4,5/8,9/0,9/8,11/0,11/8,13/0,13/8} { \draw[very thick,green] (\i,\j) -- +(1,0); }
    
    \foreach\i in {0,...,16} {
      \ifnum\i=0\def\9{a}\else\ifodd\i\def\9{d}\else\def\9{c}\fi\fi
      \foreach\s in {1,3,7,15} { \ifnum\s=\i\global\def\9{b}\fi }
      \foreach\j in {0,...,8} {
        \ifnum\j=0\tmpcnta=0\else\ifnum\i=0\tmpcnta=1\else\tmpcnta=2\fi\fi
        \foreach\a/\b in {3/2,7/4,3/6,15/8} {
          \ifnum\a>\i\ifnum\b=\j\global\tmpcnta=1\fi\fi
        }
        \foreach\a/\b/\c/\d in {3/0/2/3,3/2/4/4,3/4/6/3,3/6/8/4,7/0/4/3,7/4/8/4,15/0/8/3} {
          \ifnum\a=\i\ifnum\b<\j\ifnum\c>\j\global\tmpcnta=\d\fi\fi\fi
        }
        \global\def\8{}
        \ifx b\9\ifnum\tmpcnta<3\global\def\8{\boldsymbol}\fi\fi
        \node[cnode] at (\i,\j) {$\8\9_{\8\the\tmpcnta}$};
      }
    }
    \end{tikzpicture}}
\end{proof}

Since the hyperbolic grid has undecidable domino
problem~\cite{kari:undecidabilitytp}, so does the hyperbolic horoball;
thus the seeded tiling problem is undecidable on $\Gamma$. If fact, we
shall not need this, and rather reduce to the tiling problem on
arbitrarily large strips.

Indeed the hyperbolic horoball separates strips of width $2^s$ between
$2^s-1$ and $2^{s+1}-1$. These are the vertices coloured
$b_0,\dots,b_4,c_0,c_1,d_0,d_1$ in the colouring by
$\Theta_0$. Movement in this grid is defined exactly as in the
hyperbolic horoball: follow either of $x,y,z$ till a new marked vertex
is reached.

Given an instance of the tiling problem for the plane, namely a set of
Wang tiles $\Theta_1\subseteq C^{\{S,E,N,W\}}$ an instance of the
seeded tiling problem on $\Gamma$ may be constructed as follows. The
colours are $B_0\times\Theta_1$; the legal edge colourings enforce the
rules $\Theta_0$ on the first co\"ordinate; propagate the second
co\"ordinate in the $z$ direction till a marked vertex is reached, at
which point the $N/S$ matching rules of $\Theta_1$ are imposed; and
impose the $E/W$ matching rules of $\Theta_1$ along $x$- and
$y$-coloured edges:
\[\Theta=\left\{((p,\theta),g,(p',\theta')):
    \begin{array}{c}
      (p,g,p')\in\Theta_0,\\
      g=z\wedge p'\in\{c_2,d_2,b_3,b_4\}\implies\theta'=\theta,\\
      g=z\wedge p'\in\{c_0,c_1,b_0,b_1,b_2\}\implies(\theta')_S=\theta_N,\\
      g=x\wedge p'\in\{c_0,c_1\}\implies(\theta')_E=\theta_W,\\
      g=y\wedge p'\in\{d_0,d_1\}\implies(\theta')_E=\theta_W.
    \end{array}\right\}.\]

It is then clear that there is a valid colouring of $\Gamma$ by
$\Theta$ with first co\"ordinate $b_0$ at the origin if and only if
arbitrarily long strips of the right Euclidean half-plane can be coloured by
$\Theta_1$.

\noindent Our next step would be to consider the sunny-side-up on
$\Gamma$. Unfortunately,
\begin{lem}
  The sunny-side-up on $\Gamma$ is not sofic.
\end{lem}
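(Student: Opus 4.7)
\emph{Proof proposal.}
The plan is to compute the $\Aut(\Gamma)$-orbit of $(0,0)$, establish an approximate self-similarity of $\Gamma$ at $(0,0)$, and derive a contradiction by a Tychonoff compactness argument that produces a valid tiling projecting to the all-zero configuration.

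First I would show that the $\Aut(\Gamma)$-orbit of $(0,0)$ is the singleton $\{(0,0)\}$. A case analysis of the local structure shows that any automorphism maps column $0$ to itself, and its restriction to column $0$ (a line $\Z$) acts by translation or reflection. The key obstruction to non-trivial translation comes from the hierarchical $y$-edges: for each $s\ge 1$ these connect columns $2^s(2k+1)-1$ and $2^s(2k+1)$ only at rows divisible by $2^s$, so preserving them all forces the translation constant to be divisible by every $2^s$, hence zero. Only the identity and the reflection $(m,n)\mapsto(m,-n)$ survive, both fixing $(0,0)$. The orbit being finite, $S_{(0,0)}=\{\delta_{(0,0)}\}$, with no all-zero configuration.

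Next, I would record the following approximate self-similarity of $\Gamma$ at $(0,0)$: for every radius $R\in\N$ and every $K$ with $2^K>R$, vertical translation by $2^K$ restricts to an isomorphism $\phi_R\colon B_R((0,0))\to B_R((0,2^K))$ of rooted labelled graphs. Such a translation preserves $z$- and $x$-edges automatically, and preserves every $y$-edge of scale $s$ provided $2^s\mid 2^K$; within a radius-$R$ ball only scales $s\le\log_2 R<K$ arise.

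Third, assume for contradiction that $S_{(0,0)}$ is sofic through some pair $(\Theta,\pi)$, and pick any $\tau\in X_\Theta$. By hypothesis $\pi\circ\tau=\delta_{(0,0)}$, so $\pi\circ\tau\equiv 0$ on $B_R((0,2^K))$ whenever $2^K>R$. Pulling back through $\phi_R$ produces a $\Theta$-valid colouring $\tilde\tau_R\coloneqq\tau\circ\phi_R$ of $B_R((0,0))$ with $\pi\circ\tilde\tau_R\equiv 0$; extend it arbitrarily to $\sigma_R\in B^{V(\Gamma)}$. For each $R$ set
\[D_R=\bigl\{\sigma\in B^{V(\Gamma)}:\sigma|_{B_R((0,0))}\text{ is $\Theta$-valid and }\pi\circ\sigma\equiv 0\text{ on }B_R((0,0))\bigr\}.\]
Each $D_R$ is closed in the product topology, nonempty (it contains $\sigma_R$), and the family is decreasing in $R$. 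Tychonoff compactness then yields $\sigma^*\in\bigcap_R D_R$; since every edge of $\Gamma$ lies in some $B_R((0,0))$, the colouring $\sigma^*$ is $\Theta$-valid globally while $\pi\circ\sigma^*\equiv 0$. This all-zero configuration does not lie in $S_{(0,0)}=\{\delta_{(0,0)}\}$, the desired contradiction.

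The main obstacle is verifying the approximate self-similarity of step two: one must check, using the precise formula for the $y$-edges, that a radius-$R$ ball sees only finitely many $y$-edge scales and that all these scales are preserved under vertical translation by $2^K$ once $K$ is large enough. Once this is done, the rest is a clean compactness reduction.
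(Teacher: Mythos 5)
Your proof is correct and follows essentially the same route as the paper's: both exploit that the pointed graphs at a sequence of vertices escaping to infinity (you use $(0,2^K)$, the paper uses $(2^s,2^{s-1})$) converge to the pointed graph at the origin, then use compactness to extract a valid $\Theta$-tiling projecting to the all-zero configuration, contradicting that the finite (indeed trivial) $\Aut(\Gamma)$-orbit of $(0,0)$ excludes the all-zero configuration from $S_{(0,0)}$. Your version merely makes explicit two points the paper leaves implicit --- the triviality of the orbit and the ball isomorphisms $B_R((0,0))\cong B_R((0,2^K))$ for $2^K>R$ --- and replaces the accumulation-point extraction by an equivalent finite-intersection-property argument.
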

\begin{proof}
  Assume for contradiction that $S_{(0,0)}$ is sofic, and let
  $X_\Theta\to S_{(0,0)}$ be the factor map. Consider
  $\tau\in X_\Theta$, and define $\tau_s$ as the map $\N\times\Z\to B$
  given by $\tau_s(m,n)=\tau(m+2^s,n+2^{s-1})$. By compactness, the
  $\tau_s$ have an accumulation point $\tau_\infty$. Since the pointed
  labelled graphs $(\Gamma,(2^s,2^{s-1}))$ converge to the pointed
  graph $(\Gamma,(0,0))$, we have $\tau_\infty\in X_\Theta$, yet
  $\pi(\tau_\infty(0,0))\neq1$ since it is a pointwise limit of
  $(0,0,\dots)$.
\end{proof}

We follow a slightly different strategy to prove that the domino
problem is undecidable on $\Gamma$, which amounts to simulating the
hyperbolic horoball without marking a root on $\Gamma$.  We shall
construct, in steps, a tileset that admits as single tiling a specific
configuration of edge and vertex decorations in $\Gamma$, whose edges
form a graph quasi-isometric to the tiling of the hyperbolic horoball
by right-angled pentagons. We shall not directly write down the
tileset $\Theta$, but rather use more general patterns. This is of
course equivalent thanks to Lemma~\ref{lem:pattern}.

\begin{proof}[Marking some segments]
  Vertices of $\Gamma$ will be coloured using
  $\{\mathvisiblespace,1,2,3,4,4'\}$; the colours $1,2,3,4,4'$ will
  mark all vertices with $4$ neighbours, and $\mathvisiblespace$ will
  mark the vertices with $3$ neighbours.

  \begin{enumerate}
  \item We force every vertex marked by an integer ($1,2,3,4,4'$) to
    have $4$ neighbours. (This follows from Lemma~\ref{lem:localmark},
    and is easily done by forcing each integer-marked vertex to have a
    different colour at its $y$-neighbour.)
  \item By forbidding all patterns
    $\{1,z,x z,z^{-1}x z\}\to\{b,c\}^4$, we force every
    $\{v, z v,x z v,z^{-1}x z v\}$ to contain at least one integer
    mark.
  \item We restrict the allowed integer combinations on endpoints of
    $x$- and $y$-edges:
    \begin{gather*}
      (1,y,1),\quad(2,y,4),\quad(3,y,3),\quad(4,y,4')\\
      (3,x,4),\quad(4,x,4').
    \end{gather*}
    We also force every column to be coloured $\mathvisiblespace^\Z$
    or $(1\mathvisiblespace(2|4|4')\mathvisiblespace)^\Z$ or
    $(3\mathvisiblespace^+(4|4')\mathvisiblespace^+)^\Z$, with as
    usual for regular expressions $\mathvisiblespace^+$ denoting a
    sequence of at least one $\mathvisiblespace$ and $(4|4')$ denoting
    either $4$ or $4'$. (This is easily enforced with dominoes by
    giving a different secondary colour to all $\mathvisiblespace$'s
    in the expressions above.)
  \end{enumerate}

  By the second rule, every vertex in $\{2m,2m+1\}\times\{4n+1,4n+2\}$
  contains an integer; and the only vertices with $4$ neighbours among
  these are the $(4m+1,4n+2)$ and $(4m+2,4n+2)$. Furthermore, the
  $x$-neighbour of these vertices have $3$ neighbours; so
  $(4m+1,4n+2)$ and $(4m+2,4n+2)$ must be coloured $1$.

  The vertices $(1,4n)$ also have an $x$-neighbour with $3$
  neighbours, so they must be coloured $2$ because of the alternation
  $(1,2)$ on column $1$.

  The odd rows are then coloured $\mathvisiblespace^\N$; those at
  height $4n+2$ are coloured
  $(\mathvisiblespace\mathvisiblespace11)^\N$; those at height
  $2^{s+1}(2n+1)$ for some $s\ge1$ are coloured
  $(\mathvisiblespace2(44')^{2^s-2}4334(4'4)^{2^s-2}2\mathvisiblespace)^\N$;
  and the the horizontal axis is coloured
  $\mathvisiblespace(44')^\N$. This again easily follows from Sudoku
  deductions: because of the $y$-loop at $(2^{s+2}-1,2^{s+1}(2n+1))$,
  the colours must start by an expression of the form
  $\mathvisiblespace2(44')^*42\mathvisiblespace$ with an odd number of
  $334$ inserted in it, and because of the vertical alternation
  $(3,4|4')$ the $33$ may only occur at position $2^{s+1}-1$ and
  $2^{s+1}$:\\
  \centerline{\begin{tikzpicture}[scale=0.7,cnode/.style={inner sep=1pt,fill=white}]
    \node[label={left:\small $0$}] at (-0.2,0) {};
    \node[label={left:\small $8$}] at (-0.2,8) {};
    \node[label={below:\small $0$}] at (0,-0.2) {};
    \node[label={below:\small $8$}] at (8,-0.2) {};
    \node[label={below:\small $16$}] at (16,-0.2) {};
    \clip (-0.5,-0.5) rectangle (16.5,8.5);
    \foreach\i in {0,...,16} \draw[blue,very thin] (\i,-1) -- (\i,9);
    \foreach\i in {0,2,...,16} { \foreach\j in {0,...,8} \draw[red,very thin] (\i,\j) -- +(1,0); }
    \foreach\s in {2,4,8,16} {
      \pgfmathsetmacro\threes{3*\s}
      \foreach\i in {\s,\threes,...,16} { \foreach\j in {0,\s,...,8} \draw[green,very thin] (\i-1,\j) -- +(1,0); }
    }
    \foreach\i in {0,...,16} {
      \draw[green] (0,\i) .. controls +(135:0.6) and +(225:0.6) .. +(0,0);
    }
    \foreach\s in {2,4,8,16} {
      \foreach\i in {0,\s,...,16} {
        \foreach\j in {0,\s,...,16} {
          \draw[green] (\i+\s-1,\j+0.5*\s) .. controls +(45:0.6) and +(-45:0.6) .. +(0,0);
          \draw[green] (\i+\s,\j+0.5*\s) .. controls +(135:0.6) and +(225:0.6) .. +(0,0);
        }
      }
    }
    
    \foreach\i in {1,2,5,6,9,10,13,14} \foreach\j in {2,6} {
      \node[cnode] at (\i,\j) {$1$};
    }      
    \foreach\i/\j in {1/0,1/4,1/8,6/4,9/4,14/4,14/8} {
      \node[cnode] at (\i,\j) {$2$};
    }      
    \foreach\i/\j in {3/4,4/4,11/4,12/4,7/8,8/8} {
      \node[cnode] at (\i,\j) {$3$};
    }
    \foreach\i/\j/\k in {1/0/15,1/4/1,1/8/5,4/4/1,9/4/1,8/8/5,12/4/1} {
      \foreach\l in {1,...,\k} {
        \ifodd\l\def\9{}\else\def\9{'}\fi
        \node[cnode] at (\i+\l,\j) {$4\9$};
      }
    }
    \end{tikzpicture}}
\end{proof}

We now add orientations to some edges in $\Gamma$, to express flow of
information; distinguish certain vertices; and define some edges
between them.
\begin{proof}[Selecting vertices and edges]
  Vertices marked $1$ or $3$ always admit a $y$-neighbour with the
  same mark. The ``distinguished vertices'' we are interested in are
  the pairs of neighbours with identical odd mark. We call these
  \emph{clusters}, and more precisely $1$-clusters and $3$-clusters.

  We then add orientations to some edges. Every $1$-cluster has a
  single outgoing arrow (among its $6$ neighbours). This arrow must be
  start an oriented path of length $4$, labelled $z^{\pm2} y x$ along
  its edges, and going through vertices marked
  $1,\mathvisiblespace,2,4,3$. After the $z^{\pm2}$ it must run parallel with
  another path (coming from the $1$ at position $z^{\pm4}$ from the
  one it started at).

  Every $3$-cluster has two incoming double-arrows, along $x$-labelled
  edges, and a single outgoing arrow. As above, this arrow must start
  an oriented path labelled $(z^{\pm1})^*(y x)^*$ along its edges, and
  going through vertices marked
  $3,\mathvisiblespace,\dots,\mathvisiblespace,4',4,\dots,4',4,3$. After
  the $(z^{\pm1})^*$ it must run parallel with another path (coming
  from the $3$ at position $(z^{\pm1})^*$ from the one it started at).
  
  These sequences of arrows define ``imaginary'' paths between
  clusters. Additionally, the vertex from each cluster that does not
  have an outgoing arrow is connected by a ``real'' path
  labelled $(z^{\pm1})^*$ along its edges, and going through vertices
  marked
  $\{1,3\},\mathvisiblespace,\dots,\mathvisiblespace,4,\mathvisiblespace,\dots,\mathvisiblespace,\{1,3\}$. (The terminology ``real/imaginary'' comes from directions in the upper half-plane $\{\Im(z)>0\}$, that we shall use later.)

  We first claim that the above rules can be enforced by dominoes. To
  check this, it suffices to note that every edge gets up to two
  arrows, and that local rules determine the claimed tiling.

  We next claim that the rules enforce a unique, configuration of
  arrows. Indeed out of every four $1$-clusters at distance $4$ from
  each other, at least one can reach a unique $3$-cluster along
  $z^{\pm2}y x$; and this forces the three other $1$-clusters to be
  connected to the same $3$-cluster. The same argument applies to each
  of these $3$-clusters that were just connected to $1$-clusters: for
  every quadruple of such $3$-clusters, at least one of them will be
  connected to a unique $3$-cluster along $(z^{\pm1})^*(y x)^*$, and
  the other three in the quadruple will have to be be connected to the
  same higher-level $3$-cluster. This process can be carried on
  forever, resulting in a valid decoration of $\Gamma$ with arrows:\\
  \centerline{\begin{tikzpicture}[scale=0.7,cnode/.style={inner sep=1pt,fill=white},>=stealth']
    \node[label={left:\small $0$}] at (-0.2,0) {};
    \node[label={left:\small $8$}] at (-0.2,8) {};
    \node[label={below:\small $0$}] at (0,-0.2) {};
    \node[label={below:\small $8$}] at (8,-0.2) {};
    \node[label={below:\small $16$}] at (16,-0.2) {};
    \clip (-0.5,-0.5) rectangle (16.5,8.5);
    \foreach\i in {0,...,16} \draw[blue,very thin] (\i,-1) -- (\i,9);
    \foreach\i in {0,2,...,16} { \foreach\j in {0,...,8} \draw[red,very thin] (\i,\j) -- +(1,0); }
    \foreach\i in {1,5,...,16} { \foreach\j in {0,2,...,8} \draw[green,very thin] (\i,\j) -- +(1,0); }
    \foreach\i in {3,11,...,16} { \foreach\j in {0,4,...,8} \draw[green,very thin] (\i,\j) -- +(1,0); }
    \foreach\i in {7} { \foreach\j in {0,8} \draw[green,very thin] (\i,\j) -- +(1,0); }
    \foreach\i in {15} { \foreach\j in {0} \draw[green,very thin] (\i,\j) -- +(1,0); }
    \foreach\i in {0,...,16} {
      \draw[green] (0,\i) .. controls +(135:0.6) and +(225:0.6) .. +(0,0);
    }
    \foreach\s in {2,4,8,16} {
      \foreach\i in {0,\s,...,16} {
        \foreach\j in {0,\s,...,16} {
          \draw[green] (\i+\s-1,\j+0.5*\s) .. controls +(45:0.6) and +(-45:0.6) .. +(0,0);
          \draw[green] (\i+\s,\j+0.5*\s) .. controls +(135:0.6) and +(225:0.6) .. +(0,0);
        }
      }
    }

    \foreach\i/\j/\d in {1/2/1,1/3/1,3/4/1,3/5/1,3/6/1,3/7/1,6/2/1,6/3/1,7/8/1,9/2/1,9/3/1,12/4/1,12/5/1,12/6/1,12/7/1,14/2/1,14/3/1,
      1/6/-1,1/5/-1,3/9/-1,6/6/-1,6/5/-1,9/6/-1,9/5/-1,12/9/-1,14/6/-1,14/5/-1} {
      \draw[->,thick,blue] (\i,\j) -- +(0,\d);
    }
    \foreach\i/\j/\d/\c in {1/4/1/green,2/4/1/red,6/4/-1/green,5/4/-1/red,3/8/1/green,4/8/1/red,5/8/1/green,6/8/1/red,9/4/1/green,10/4/1/red,14/4/-1/green,13/4/-1/red,12/8/-1/green,11/8/-1/red,10/8/-1/green,9/8/-1/red} {
      \draw[white] (\i,\j) -- +(\d,0);
      \draw[->,thick,\c] (\i,\j+0.05) -- +(\d,0);
      \draw[->,thick,\c] (\i,\j-0.05) -- +(\d,0);
    }
    \foreach\i/\j in {1.5/2,5.5/2,9.5/2,13.5/2,1.5/6,5.5/6,9.5/6,13.5/6,3.5/4,11.5/4,7.5/8} {
      \draw (\i,\j) ellipse (7mm and 2mm);
    }
    \foreach\i/\j/\d in {2/-2/4,2/2/4,2/6/4,4/-4/8,4/4/8,5/-2/4,5/2/4,5/6/4,8/-8/16,8/8/16,10/-2/4,10/2/4,10/6/4,11/-4/8,11/4/8,13/-2/4,13/2/4,13/6/4} {
      \draw[white] (\i,\j) -- +(0,\d);
      \draw[blue,dashed,thick] (\i,\j) -- +(0,\d);
    }
    \end{tikzpicture}}
\end{proof}

\noindent We are ready to put the pieces together:
\begin{proof}[Proof of Theorem~\ref{thm:bs}]
  Consider the graph with vertex set all clusters, connected by
  ``real'' edges (dashed above) and ``imaginary'' edges
  (following arrows). Consider furthermore domino problems on that
  graph, on which we impose the additional constraint that, at every
  $3$-cluster, the left incoming double arrow carries the same pair of
  dominoes as the right incoming double arrow. This is of course the
  same as considering the domino problem on the quotient graph in
  which the left and right incoming double arrows are identified, and
  so are the subgraphs they originate from; thus in the image above
  columns $0\dots2$ and $7\dots5$ are identified, columns $0\dots6$
  and $15\dots9$ are identified, etc. The resulting graph is a tiling
  of hyperbolic horoball by triangles and squares:\\
  \centerline{\begin{tikzpicture}[scale=0.7]
      \clip (0.5,0.5) rectangle (13.5,8.5);
      \foreach\s in {1,2,4,8} {
        \pgfmathsetmacro\threes{3*\s}
        \foreach\i in {-\s,\s,...,14} {
          \draw[dashed] (\i,\s) -- +(2*\s,0);
          \ifnum\s>1
          \draw[->,shorten >=1mm,shorten <=1mm] (\i-\s/2,\s/2) -- +(\s/2,\s/2);
          \draw[->,shorten >=1mm,shorten <=1mm] (\i+\s/2,\s/2) -- +(-\s/2,\s/2);
          \fi
          \filldraw[fill=white] (\i,\s) circle (1mm);
        }
      }
    \end{tikzpicture}
  }    

  We invoke the classical fact that the domino tiling is undecidable
  on the hyperbolic plane~\cite{kari:revisited} (see
  also~\cite{margenstern:hyperbolicundecidable}). These sources
  typically consider the tiling of the hyperbolic plane by pentagons,
  but the problems are equivalent, since every pair of triangle and
  neighbouring square may be converted to a pentagon by ignoring their
  common edge.
\end{proof}

%%%%%%%%%%%%%%%%%%%%%%%%%%%%%%%%%%%%%%%%%%%%%%%%%%%%%%%%%%%%%%%%
\section{Barbieri-Sablik's self-similar structures}\label{ss:bsss}
In~\cite{barbieri-sablik:ssdomino}, the authors consider
substitutional colourings of Euclidean space, as means of defining
domino problems on self-similar sets. They consider a black/white
colouring $s$ of a $k_1\times\cdots\times k_d$ box in $\Z^d$, for
definiteness $\{0,\dots,k_1-1\}\times\cdots\times\{0,\dots,k_d-1\}$;
define $\lambda_0$ as a black box at the origin, and for all $n\ge1$
define $\lambda_n$ as the colouring of the
$k_1^n\times\cdots\times k_d^n$ box obtained by replacing, in
$\lambda_{n-1}$, every black box by $s$ and every white box by an
all-white box of size $k_1\times\cdots\times k_d$.  Let then
$\Lambda_s$ denote the generated $\Z^d$-subshift: it consists of all
maps $\tau\colon\Z^d\to\{\circ,\bullet\}$ such that, for every finite
subset $P\subset\Z^d$, the restriction of $\tau$ to $P$ coincides, up
to translation, with the restriction of some $\lambda_n$ to some
subset $P+x$ contained in the box $\lambda_n$.

Barbieri and Sablik then consider the following modification of the
domino problem on $\Z^d$, called ``$s$-domino problem'': an instance
is a set of colours $B=\{\circ\}\sqcup B_\bullet$ and tileset
$\Theta\subset B\times\{-1,0,1\}^d\times B$, assumed to contain
$(\circ,\varepsilon,\circ)$ for all $\varepsilon$; there is a natural
map $\pi\colon B\to\{\circ,\bullet\}$ given by
$B_\bullet\to\{\bullet\}$. Then the answer should be ``yes'' if and
only if there exists a non-trivial valid colouring of $\Z^d$ that
projects to $\Lambda_s$, namely a colouring $\tau\colon\Z^d\to B$ with
$\pi\circ\tau\in\Lambda_s$ and $\tau\not\in\{\circ\}^{\Z^d}$ and
$(\tau(x),\varepsilon,\tau(x+\varepsilon))\in\Theta$ for all
$x\in\Z^d,\varepsilon\in\{-1,0,1\}^d$. (Their formulation uses more
general patterns than dominoes, but this is equivalent by
Lemma~\ref{lem:pattern}).

Here are three examples of substitutions, all on the plane, taken from
their article:\\
\centerline{\bsrule{2}{1/0,0/0,1/1}\hspace{2cm}\bsrule{3}{0/0,0/1,0/2,1/1,2/0,2/1,2/2}\hspace{2cm}\bsrule{3}{0/0,0/1,0/2,1/0,1/2,2/0,2/1,2/2}}

Iteration of the first rule produces a discrete approximation of the
Sierpiński gasket, for which they prove that the domino problem is
decidable (as we will also see in~\S\ref{ss:sierpinski}). The third
rule produces a discrete approximation of the Sierpiński carpet, for
which they prove that the domino problem is undecidable. They list the
second example as an interesting border case between decidability and
undecidability.

In terms of shift spaces, an instance of the $s$-domino problem is a
set $\mathcal F$ of forbidden patterns on an alphabet
$B=\{\circ\}\sqcup B_\bullet$, leading to a natural map
$\pi\colon B^{\Z^d}\to\{\circ,\bullet\}^{\Z^d}$, and the question is
whether there is an $\mathcal F$-avoiding configuration in
$\pi^{-1}(\Lambda_s\setminus\{\circ\}^{\Z^d})$, namely if
$\pi^{-1}(\Lambda_s\setminus\{\circ\}^{\Z^d})\cap X_{\mathcal
  F}\neq\emptyset$. A variety of related problems may be asked, for
example ``does one have $\pi(X_{\mathcal
  F})\supseteq\Lambda_s$?''. The substitution $s$ induces a self-map
$\overline s$ of $\{\circ,\bullet\}^{\Z^d}$, replacing each $\bullet$
by the grid $s$ while preserving the origin. Set
$\Lambda'_s=\bigcap_{n\ge0}\overline s^n(\{\circ,\bullet\}^{\Z^d})$,
the set of configurations that admit infinitely many preimages under
$\overline s$. Then $\Lambda'_s\supseteq\Lambda_s$, and possibly
contains some extra ``limit'' configurations, such as the combination
of different elements of $\Lambda_s$ on different orthants. The above
questions ``$\pi(X_{\mathcal F})\subseteq\Lambda'_s$?''
``$\pi(X_{\mathcal F})\cap\Lambda'_s\ne\{\circ\}^{\Z^d}$?'' can also
be asked for $\Lambda'_s$.

Although in some cases these questions can have different answers, it
is possible, at least in all cases I considered, to reduce
decidability of one question to the other, so I will not devote too
much attention to these distinctions. For example, unless the
substitution $s$ is constant, the subshift $\Lambda_s$ is almost
minimal (it has a unique closed invariant subset $\{\circ\}^{\Z^d}$),
in which case $\pi(X_{\mathcal F})\cap\Lambda_s$ is either
$\{\circ\}^{\Z^d}$ or $\Lambda_s$ itself.

My point is, rather, that a wealth of interesting tiling problems
arise within the language of Schreier graphs, and that domino problems
defined via substitutions in Euclidean space can be reformulated in a
natural way by ridding them of an ambient space in which they embed.
In the case of \emph{contracting} groups, the Schreier graphs can in
principle be quasi-isometrically imbedded in $\R^d$ for some $d$, and
therefore could, in principle, be cast into a language of
substitutions. This doesn't even seem approachable for other examples
such as the long range graph.

\subsection{Self-similar structures}
Let us see how graphs naturally arise from the substitution $s$. Let
$M$ denote the diagonal matrix with entries $k_1,\dots,k_d$ along the
diagonal. We first associate with $s$ a self-similar structure in the
sense of Kigami. Let $S$ denote the set of coordinates $\in\Z^d$ at
which $s(\tikz{\fill(0,0) rectangle (1.5ex,1.5ex);})$ has a black box,
and for each $v\in S$ consider the affine map
$F_v(x)=M^{-1}(x+v)$. Start by $K_0=[0,1]^d$, and for each $n\ge0$ set
$K_{n+1}=\bigcup_{v\in S}F_v(K_n)$. Set finally
$K=\bigcap_{n\ge0}K_n$. Then $(K,\{F_v\}_{v\in S})$ is a self-similar
structure: the maps $F_v$ are contractions, so the coding map
$\pi\colon S^{-\N}\to K$ maps $(v_i)_{i\le0}$ to the limit of
$F_{v_0}(F_{v_{-1}}(F_{v_{-2}}(\cdots)))$.

The recipe of~\S\ref{ss:ss} then produces graphs $\Gamma_\xi$ for all
$\xi\in S^\N$; however we shall need \emph{labelled} graphs for the
domino problem, so we rather turn $s$ into a self-similar group.

In fact, the most natural algebraic structure to associate with a
substitution $s$ is a \emph{pseudo-group}, namely a collection of
partially-defined bijections of $S^\N$, closed under
composition. These bijections will be given by partially-defined maps
$\Phi\colon A\times S\dashrightarrow S\times A$. I will remark later
how to obtain \emph{bona fide} group actions.

The transducer $\Phi$ associated with the substitution $s$ has alphabet
$S$ and stateset $A=\{-1,0,1\}^d$. For each $v\in S,a\in A$: if
$v+a=v'+M(a')$ for some $v'\in S,a'\in A$ then $\Phi(a,v)=(v',a')$,
and otherwise $\Phi(a,v)$ is not defined. Note that the $v',a'$ above
are unique if they exist, since $M$ has full rank and no two elements
of $S$ are congruent modulo $M(\Z^d)$. In particular, state $0^d$ is
the identity. For example, the transducer associated with the Sierpiński gasket, with alphabet $\{0=(0,0),1=(1,0),2=(1,1)\}$, is\\
\centerline{\begin{fsa}[scale=1,every state/.style={minimum size=6mm,inner sep=0mm}]
    \node[state] (nw) at (-2,2) {$\nwarrow$};
    \node[state] (n) at (0,2) {$\uparrow$};
    \node[state] (ne) at (2,2) {$\nearrow$};
    \node[state] (w) at (-2,0) {$\leftarrow$};
    \node[state] (x) at (0,0) {$\cdot$};
    \node[state] (e) at (2,0) {$\rightarrow$};
    \node[state] (sw) at (-2,-2) {$\swarrow$};
    \node[state] (s) at (0,-2) {$\downarrow$};
    \node[state] (se) at (2,-2) {$\searrow$};
    \path (nw) edge node {$2|0$} (n) edge node {$0|2$} (w)
    (n) edge[loop right] node {$2|1$} () edge node {$1|2$} (x)
    (ne) edge[loop right] node {$2|0$} () edge node {$0|2$} (x)
    (w) edge[loop left] node {$0|1$} () edge node {$1|0$} (x)
    (e) edge[loop right] node {$1|0$} () edge node {$0|1$} (x)
    (sw) edge[loop left] node {$0|2$} () edge node {$2|0$} (x)
    (s) edge[loop left] node {$1|2$} () edge node {$2|1$} (x)
    (se) edge node {$0|2$} (s) edge node {$2|0$} (e);
  \end{fsa}}

Note that the transducer $\Phi$ is contracting, and that its nucleus
is a subset of $A$. However, it need not be recurrent, for example the
state $\searrow$ above is not.

To obtain a genuine, everywhere-defined action, we can now replace
every generator $a$ by a product $b\cdot c$ of two involutions, and
force $b$ or $c$ to have fixed points where $a$ is not defined. In
effect, we replace the orbits of $a$, which are lines or line
segments, by orbits of (finite or infinite) dihedral groups
$\langle b,c\rangle$. This adds notational complications without
changing much about the Schreier graphs. There is, however, the
important effect that we are adding loops to the Schreier graph, which
can then be detected by dominoes.

To relate the domino problems on $s$ and the Schreier graphs of
$\Phi$, fix a sequence $\xi=(\xi_n)\in S^\N$ of distinguished black
boxes. There is then a well-defined associated colouring of the plane:
start by the partial colouring $\lambda_1$ of $\Z^d$ with $\xi_1$ at
the origin; note that it extends to the partial colouring $\lambda_2$
with $\xi_1\xi_2$ (namely, the box $\xi_1$ inside the box $\xi_2$) at
the origin; and so on, with box $\xi_1\dots\xi_n$ of $\lambda_n$ at
the origin. If the limit does not colour all $\Z^d$, it means that
almost all $\xi_i$ lie on some boundary facet of the box $s$. Extend
then the colouring to $\Z^d$ by everywhere-$\circ$.

On the other hand, consider the Schreier graph of the partial action
of $\langle A\rangle$ on $\xi$, and call this graph $\Gamma_\xi$. The
following is an immediate translation:
\begin{prop}
  Assume that the collection of black squares in $s$ is connected, and
  consider $\xi\in S^\N$. Then the domino problem ``does there exist a
  colouring $\tau\colon\Z^d\to B$ projecting to $\tau_\xi$?'' is
  equivalent to the domino problem on $\Gamma_\xi$.
\end{prop}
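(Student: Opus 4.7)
The plan is to identify the vertex set of $\Gamma_\xi$ with the set of black squares in the colouring $\tau_\xi$, and edges labelled $a \in A = \{-1,0,1\}^d$ with adjacencies of black squares at offset $a$ in $\Z^d$. The definition of $\Phi$ gives this identification immediately: if $v \in S$ is a black square in $s$ and $a \in A$, then $\Phi(a, v) = (v', a')$ is defined precisely when $v + a = v' + M(a')$ for some $v' \in S$, which by linearity lifts under iterated substitution to the statement ``the neighbour of $x$ at offset $a$ in $\tau_\xi$ is black, and lands at position $v'$ inside the adjacent level-$1$ block indicated by $a'$''. The connectedness assumption on the black squares of $s$ is used exactly to guarantee that the limit colouring $\tau_\xi$ has all its black squares in one orbit of the pseudogroup, so that $\Gamma_\xi$ captures the whole black pattern of $\tau_\xi$ rather than just a single component.

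For the forward reduction, given an instance $\Theta \subseteq B \times \{-1,0,1\}^d \times B$ of the $s$-domino problem with $(\circ,\varepsilon,\circ)\in\Theta$, I build a pattern-based domino problem on $\Gamma_\xi$ with colour set $B_\bullet$ by keeping the edge constraints $\Theta \cap (B_\bullet \times A \times B_\bullet)$ and adding, at each vertex $x$ and each $a \in A$ for which no $a$-edge leaves $x$ (i.e.\ the neighbour $x+a \in \Z^d$ is white), the local restriction $(\tau(x),a,\circ)\in\Theta$. Since $\Gamma_\xi$ has valence at most $|A|$, the ``no $a$-edge at $x$'' condition is detectable from the ball of radius one around $x$, so the restriction is a local pattern in the sense of Lemma~\ref{lem:pattern}; invoking that lemma converts the system into a bona fide domino problem on $\Gamma_\xi$. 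A colouring of $\tau_\xi$ valid for $\Theta$ restricts to a valid $\Gamma_\xi$-tiling, and conversely any $\Gamma_\xi$-tiling extends by assigning $\circ$ to white squares.

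The reverse reduction is easier: given $\Theta' \subseteq B' \times A \times B'$ on $\Gamma_\xi$, set $B = \{\circ\} \sqcup B'$ and
\[
\Theta = \Theta' \cup \{(\circ,\varepsilon,\circ) : \varepsilon\} \cup \{(b,\varepsilon,\circ),(\circ,\varepsilon,b) : b \in B', \varepsilon \in \{-1,0,1\}^d\}.
\]
Because the adjacency of a black square with a white neighbour imposes no constraint beyond the colour being in $B'$, a valid $\Gamma_\xi$-tiling by $\Theta'$ combined with the constant $\circ$ on white squares gives a valid $s$-domino tiling projecting to $\tau_\xi$, and conversely, restricting any such $s$-tiling to the black squares recovers a valid $\Theta'$-tiling of $\Gamma_\xi$.

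The main technical point to watch is the detection, inside $\Gamma_\xi$, of which of the $|A|$ directions at each vertex are ``missing'' (i.e.\ point to a white square). This is where the comment preceding the proposition about replacing each partial generator $a$ by a product $b \cdot c$ of involutions with fixed points on the undefined domain becomes useful: it turns missing edges into loops, which are detectable by a radius-one pattern. Even without that trick, bounded valence together with Lemma~\ref{lem:pattern} suffices to encode the local constraint. I expect no further obstacle beyond carefully verifying that the identifications of vertices and edges of $\Gamma_\xi$ with black squares and black adjacencies of $\tau_\xi$ are genuinely bijective under the connectedness hypothesis.
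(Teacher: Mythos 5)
Your proposal follows essentially the same route as the paper: identify the orbit $\langle A\rangle\cdot\xi$ with the black boxes of $\tau_\xi$ (connectedness being used exactly as you say), identify edges of $\Gamma_\xi$ with black--black adjacencies, and then deal with the one genuine issue, namely that a black box in the $s$-domino problem can ``sense'' a white neighbour whereas $\Gamma_\xi$ simply has no edge there. The two reductions you write out are correct, and the reverse one is indeed trivial.

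The one step you should not wave through is the claim that ``bounded valence together with Lemma~\ref{lem:pattern} suffices'' to detect a missing $a$-edge. A forbidden pattern supported on $\{1,a\}$ is vacuously avoided at a vertex where $a\cdot x$ is undefined, so in the bare partial-action graph a tileset cannot distinguish a vertex with a white $a$-neighbour from one with a black $a$-neighbour: graph-theoretic locality is not the same as domino-expressibility. You need an actual mechanism, and you name a valid one --- the dihedral replacement turns missing edges into loops, which are detectable by Lemma~\ref{lem:localmark} (over-marking is harmless since the extra constraints only appear where they are anyway satisfied or irrelevant). The paper's own mechanism is slightly different: it adds a colour $\ell$ admitting no domino $(*,\rightarrow,\ell)$, so that $\ell$ can only sit at vertices with no left neighbour, and then forces $\ell$ to appear wherever the substitution structure demands a white neighbour; both devices do the job, but one of them is genuinely needed. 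Finally, the paper also records the degenerate case where almost all $\xi_i$ lie on a facet of $s$, so that $\Gamma_\xi$ only covers an orthant and $\tau_\xi$ is extended by $\circ$ outside it; your bijection claim should be qualified accordingly, though this does not affect the equivalence.
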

\begin{proof}
  Since the black squares in $s$ are connected, the Schreier graph of
  the partial action of $\langle A\rangle$ on $S^n$ is connected for
  all $n$; so the orbit $\langle A\rangle\cdot\xi$ is naturally in
  bijection with the black boxes in $\tau_\xi$.

  The edges of $\Gamma_\xi$ are also naturally in bijection with the
  edges of the adjacency graph of black boxes in $\tau_\xi$. In
  considering the domino problem above $\tau_\xi$, there are also
  edges between white and black boxes: the tile at a black box can
  ``sense'' whether its neighbour in white, while the graph
  $\Gamma_\xi$ has no such edges. This is however easy to remedy by
  adding extra layers to the tiling: for example, to detect whether
  the left neighbour is white, add a colour $\ell$ with no domino
  $(*,\rightarrow,\ell)$, and force by local rules the colour $\ell$
  to appear as often as $s$ requires it.

  If almost all $\xi$ happen to lie on the same facet of $s$, then the
  Schreier graph $\Gamma_\xi$ is exceptional, and its vertices
  correspond to only an orthant of $\Z^d$. The tiling $\tau_\xi$,
  likewise, was extended by $\circ$ on the complement of the orthant.
\end{proof}

Even though there is a continuum of different Schreier graphs, varying
$\xi$, there are only finitely many different domino problems: the
collection of finite balls in the $\Gamma_\xi$ only depends on which
sides of the box $s$ contain all but finitely many of the
$\xi_n$'s. It follows that the ``substitutional domino problems'' of
Barbieri-Sablik and the graph domino problems $\Gamma_\xi$ are
essentially equivalent.

\subsection{The $H$-graph}
We will study in much more detail in~\S\ref{ss:bs} the middle example
of substitution above, here called the $H$-graph because of the shape
of its black boxes. Zooming at the central square produces a symmetric
figure, and the graph given in~\S\ref{ss:introu} is just half of the
picture. The domino problems are equivalent on the graph and its half,
and I chose to work with the half-graph because the transducer
producing it is simpler.

The recipe producing a transducer from a substitution, outlined above,
would have produced a partial action of $\langle \{-1,0,1\}^2\rangle$
on $\{1,\dots,7\}^\N$. Firstly, by simplifying the resulting graph, I
could put it more nicely in the plane, and use only $4$ symbols
instead of $7$. One of the generators --- the vertical translation
$(0,1)$ --- can be defined everywhere, and rewritten $z$; while the
other one, $(1,0)$, is replaced by two involutions $x,y$ whose orbits
generate dihedral groups of order $2^{s+2}$ on the row at height
$2^s(2n+1)$, and an infinite dihedral group on the horizontal axis.

It would also have been possible to extend the partial action into a
genuine group action, by letting, in the recursive formulas above,
$\Phi(a,v)=(v',a^\varepsilon)$ with $v'$ the first black box
following $v$ cyclically in direction $a$; and $\varepsilon=1$ if $v'$
is reached by wrapping around and $\varepsilon=0$ if not. We are then
adding edges to the graph $\Gamma_\xi$. Applying this recipe to the simplification of the $H$-graph produces the following transducer:\\
\centerline{\begin{fsa}[baseline] \node[state] (w) at (2.5,0) {$w$};
    \node[state] (e) at (5,0) {$e$}; \node[state] (z) at (7.5,0)
    {$z$}; \path (w) edge[out=150,in=210,loop] node[left] {$10|00$} ()
    (w) edge[bend right] node[below] {$11|01$} (e) (w) edge[bend left]
    node {$00|10,01|11$} (e) (z) edge[out=-30,in=30,loop] node[right]
    {$\begin{matrix}01|00\\11|10\end{matrix}$} () (z) edge node[below]
    {$10|11$} node[above] {$00|01$} (e);
  \end{fsa}}

In it, the edge `$11|01$' from $w$ to $e$ should be removed to define
a partial action of $w$; the Schreier graph then has vertices
naturally in bijection with $\Z^2$, via binary encoding, and is made
of two copies of the graph given in~\S\ref{ss:introu}, joined by an
edge. This changes nothing to the fundamental nature of the domino
problem.

\subsection{Substitutions with unbounded connectivity}\label{ss:isthmus}
In the next-to-last section of~\cite{barbieri-sablik:ssdomino}, the
authors propose a separation of substitutions in different classes,
using which they conjecturally settle the decidability of the domino
problem. Firstly, even if this is not explicit, they assume that the
collection $S$ of black squares in $s$ is connected. They distinguish
a set $\mathbb W$ of directions, which correspond to the recurrent
states of the transducer $\Phi$ above. They define then \emph{flexible
  lines in direction $t$} as sequences
$x_0,x_1,\dots,x_n=x_0+(0,\dots,k_t,\dots,0)$ in $S$ with
$x_j-x_{j-1}\in\mathbb W$ for all $j=1,\dots,n$; and say $s$ has
\emph{bounded connectivity} if there is at most one flexible line in
each direction, while $s$ has an \emph{isthmus} if there is one
flexible line in one direction, and at least two disjoint flexible
lines in another.

They claim that if substitution $s$ has bounded connectivity then the
$s$-domino problem is decidable; and that this follows from an
adaptation of their main theorem. Their definition of bounded
connectivity seems a bit too wide for this to hold; for example, the
substitution\\
\centerline{\bsrule{5}{0/0,0/1,0/2,0/3,1/2,2/2,3/2,4/2,4/3,4/4,1/3,1/4,2/4,2/0,3/0,3/1,4/1}}\\
may also simulate the hyperbolic grid, yet follows their definition of
``bounded connectivity''. What is true, and follows from their
Theorem~1 and from Theorem~\ref{thm:bounded}, is that if the
transducer $\Phi$ is bounded then the associated domino problem is
decidable.

Moreover, it could well be that $\Phi$ is not bounded, but a
\emph{conjugate} of $\Phi$ is bounded. Algebraically, this is just a
conjugate of the (pseudo)group by a transformation of $S^\N$, itself
given by an initial transducer. This may be phrased in the following
manner: a substitution is \emph{conjugate to bounded} if for every
direction $t\in\{1,\dots,d\}$ there is a partition of
$S\sqcup(S+(0,\dots,k_t,\dots,0))$ in two pieces $S_0\sqcup S_1$ such
that there is a single edge in direction $t$ connecting $S_0$ to
$S_1$. (The case of bounded transducers corresponds to $S_0=S$ and
$S_1=S+(0,\dots,k_t,\dots,0)$.)
\begin{prop}
  If $s$ is conjugate to bounded then the $s$-domino problem is decidable.
\end{prop}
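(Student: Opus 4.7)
The plan is to exhibit, from the combinatorial partition data, an initial transducer $\psi$ on $S^\N$ such that $\psi\Phi\psi^{-1}$ is a bounded self-similar transducer in the sense of~\S\ref{ss:bounded}; the $s$-domino problem then reduces to the domino problem on the Schreier graphs of this conjugated group, which is decidable by Proposition~\ref{prop:pcfdecidable}.

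I first treat a single direction $t\in\{1,\dots,d\}$. Given a partition $S\sqcup(S+k_t e_t)=S_0\sqcup S_1$ with a unique direction-$t$ edge between $S_0$ and $S_1$, define recursively a permutation $\rho_t$ of $S^\N$ as follows: at each level $n$, the partition tells us which children of a given box lie on the ``far'' side of the single crossing edge, and we relabel those children by swapping them with a canonical choice. The effect is that in the recoded alphabet, the generator translating by $e_t$ in the limit space sends each length-$n$ tile either entirely inside a single parent tile, or crosses exactly one boundary per level via the distinguished edge; thus the paths in the transducer produced for this generator settle into the identity except along a finite set of ``boundary rays'', which is precisely the bounded condition. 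Combining the $\rho_t$ across all $d$ directions (they act on independent coordinates of $S$, viewed through the matrix $M$, and so their definitions can be interleaved consistently), I obtain an initial transducer $\psi$.

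I then verify that $\psi\Phi\psi^{-1}$ is bounded and post-critically finite: its nucleus contains the conjugates of the $2d$ translations $\pm e_t$, and for each the only left-infinite paths ending at a non-trivial state are the finitely many boundary rays supplied by the partitions. The Schreier graph of this conjugated group acting on $\psi(\xi)$ is isomorphic, as an unlabelled rooted graph, to $\Gamma_\xi$, via the bijection $\psi$, so a tiling problem on $\Gamma_\xi$ translates into a tiling problem on the Schreier graphs of the bounded self-similar group, for which Proposition~\ref{prop:pcfdecidable} applies.

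The delicate step is the construction and verification of $\rho_t$: one must check that the recursive relabeling is well-defined on $S^\N$ (i.e.\ that the level-by-level instructions are consistent and produce a genuine homeomorphism), and, more importantly, that after conjugation the set of left-infinite paths in the nucleus ending at a non-trivial state is finite. The partition hypothesis gives exactly one crossing per level in each direction, so the count of such paths per direction is bounded by a constant depending only on the size of the nucleus cycles created by boundary rays; summing over directions gives a finite total. Once boundedness is established, the rest is a direct application of Proposition~\ref{prop:pcfdecidable} and the equivalence between the $s$-domino problem and the $\Gamma_\xi$-domino problem established at the end of~\S\ref{ss:bsss}.
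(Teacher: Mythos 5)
Your proposal follows essentially the same route as the paper: conjugate $\Phi$, one direction at a time, by a recursively defined initial transducer built from the partition $S_0\sqcup S_1$, so that the result is bounded, and then invoke the decidability of the domino problem for bounded (post-critically finite) transducer groups. The paper makes the conjugator completely explicit --- it is the two-state initial transducer $\Psi$ with stateset $\{\psi,a\psi\}$ and rule $\Psi(\psi,v)=(v,a^\varepsilon\psi)$ when $v+a\in S_\varepsilon$ --- which is exactly the object your ``delicate step'' asks for, so you may simply substitute it for your informally described $\rho_t$.
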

\begin{proof}
  As the name hints, we shall show that the transducer $\Phi$ may be
  conjugated to a bounded transducer. We do this one direction at a
  time. In direction $a\in A$, let the partition be $S_0\sqcup
  S_1$. Define then the initial transducer $\Psi$, with alphabet $S$,
  stateset $\{\psi,a \psi\}$ and initial state $\psi$, by
  $\Psi(\psi,v)=(v,a^\varepsilon\psi)$ if $v+a\in S_\varepsilon$. Then
  the conjugate of $\Phi$ by $\psi$ is bounded in direction $a$.
\end{proof}

Now minimality of the dynamical system $\Lambda_s$ implies that, if
$s$ contains an isthmus, then its associated colourings contain
densely a (possibly deformed) copy of the $H$-substitution; or,
equivalently, the $H$-graph. Furthermore, local rules (purely based on
what appears inside $s$) allow this $H$-graph to be distinguished as a
quasi-isometrically imbedded subgraph. From this we deduce:
\begin{prop}
  If $s$ contains an isthmus then the $s$-domino problem is undecidable.\qed
\end{prop}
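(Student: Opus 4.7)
The plan is to reduce the domino problem on the $H$-graph, undecidable by Theorem~\ref{thm:bs}, to the $s$-domino problem, by marking inside every $\Lambda_s$-configuration a quasi-isometric copy of the $H$-graph and then forwarding the $H$-tileset constraints along that marked copy. This is the same pattern of argument as in Propositions~\ref{prop:lrsimul} and~\ref{prop:bssimul}: use a layered tileset to single out a distinguished subgraph, then superimpose an independent coloring layer obeying the target tileset and propagating along the marked edges.

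First I would cash in the isthmus hypothesis combinatorially. The existence of two disjoint flexible lines in one direction $t_1$ together with a flexible line in another direction $t_2$ means that the set $S$ of black squares in $s$ contains the shape of a (possibly deformed) capital $H$: two parallel $t_1$-``bars'' joined by a single $t_2$-``isthmus''. Iterating $s$, the block $\lambda_n$ contains a nested sequence of such $H$-shapes at scales $k_{t_1}^j\times k_{t_2}^j$ for $j\le n$, and the fact that each flexible line is a genuine path of black boxes inside $S$ guarantees that consecutive scales are connected by paths internal to the black region. This is precisely the combinatorial skeleton of the substitution studied in~\S\ref{ss:bs}, and almost-minimality of $\Lambda_s$ (noted in~\S\ref{ss:bsss}) ensures that this nested $H$-skeleton appears densely in every configuration of $\Lambda_s$.

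Second I would promote this to a local marking. Using the recipe of Lemma~\ref{lem:localmark} I can, by a layered tileset, distinguish vertices belonging to bars of each of finitely many ``roles'' (which bar at which scale, which isthmus, left/right endpoint, etc.), propagate these roles along flexible-line paths (which are locally recognizable from a bounded window inside $s$), and check at every bar/isthmus junction that the roles match. Exactly as in the Sudoku-style deduction in the proof of Proposition~\ref{prop:bssimul}, the constraints force the marked subgraph to coincide with the canonical $H$-skeleton at every scale, yielding a quasi-isometric embedding of the $H$-graph into $\Gamma_\xi$. Once this marking is in place, an instance $\Theta\subset B\times A_H\times B$ of the $H$-graph domino problem is encoded by adding a second coordinate in $B$ which is propagated along marked edges (ignoring unmarked ones) and whose matching is enforced only at marked vertices, exactly as in the reduction just after Proposition~\ref{prop:bssimul}. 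A valid $s$-tiling of the enlarged tileset exists iff $\Theta$ tiles the $H$-graph; by Theorem~\ref{thm:bs} the latter problem is undecidable, hence so is the $s$-domino problem.

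The main obstacle is step two: turning the isthmus hypothesis, which is a single combinatorial condition on $s$, into local tileset rules that unambiguously recognize the $H$-skeleton at every scale. One must ensure that the markings cannot ``slip'' (a spurious shift of the skeleton, or a missed level), and that the two bars are consistently identified across all scales despite the potential ambiguity in choosing which flexible line plays which role. The almost-minimality of $\Lambda_s$ and the fact that $s$ is a fixed finite pattern, so that the window needed to distinguish each configuration is bounded, should make this bookkeeping possible in the same spirit as~\S\ref{ss:bs}; I omit the detailed enumeration of colours and rules since it is an exercise in the technique already illustrated there.
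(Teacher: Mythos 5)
Your proposal matches the paper's argument: the paper likewise deduces the result from (almost-)minimality of $\Lambda_s$ forcing a dense, deformed copy of the $H$-substitution into every configuration, the observation that local rules can distinguish this $H$-graph as a quasi-isometrically embedded subgraph, and then the undecidability of the $H$-graph domino problem from Theorem~\ref{thm:bs}. The paper in fact gives even less detail than you do (the proposition is stated with an immediate \qed, the whole argument being the two sentences preceding it), so your more explicit account of the marking layer and of the reduction is a faithful elaboration of the same route.
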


%%%%%%%%%%%%%%%%%%%%%%%%%%%%%%%%%%%%%%%%%%%%%%%%%%%%%%%%%%%%%%%%
\begin{bibsection}
  \begin{biblist}
    \bibselect{math}
  \end{biblist}
\end{bibsection}

\end{document}